 \numberwithin{equation}{section}
\theoremstyle{plain}
 \def\eps{\varepsilon}
\newtheorem{theorem}{Theorem}[section]
\newtheorem{corollary}{Corollary}
\newtheorem{lemma}[theorem]{Lemma}
\newtheorem{proposition}{Proposition}
\theoremstyle{definition}
\newtheorem{remark}{Remark}
\newcommand{\N}{{\mathbb N}}
\newcommand{\Z}{{\mathbb Z}}
\newcommand{\R}{{\mathbb R}}
\newcommand{\Q}{{\mathbb Q}}
\newcommand{\C}{{\mathcal C}}
\newcommand{\D}{{\mathcal D}}
\newcommand{\M}{{\mathcal M}}
\title[Localized asymptotic
      behavior for potentials]
      {Localized asymptotic
      behavior for almost additive potentials}
\author{Julien Barral}
\address{LAGA (UMR 7539), D\'epartement de Math\'ematiques, Institut Galil\'ee, Universit\'e
 Paris 13, 99 avenue Jean-Baptiste Cl\'ement , 93430  Villetaneuse, France}
\email{barral@math.univ-paris13.fr}
\author{Yan-Hui Qu}
\address{Department of Mathematics, Tsinghua University, Beijing 100084, China
}
\email{jyh02@mails.tsinghua.edu.cn}
\subjclass{Primary: 37B40; Secondary: 28A80.}
 \keywords{Almost additive potential, weak concavity, multifractal analysis.}
\begin{document}
\maketitle


\begin{abstract}
We conduct the multifractal analysis of the level sets of the
asymptotic behavior of almost additive continuous potentials
$(\phi_n)_{n=1}^\infty$ on a topologically mixing subshift of finite
type $X$ endowed itself with a metric associated with such a
potential. We work without additional regularity  assumption other
than continuity. Our approach differs from those used previously to
deal with this question under stronger assumptions on the
potentials. As a consequence, it provides
 a new description of the structure of the spectrum in terms of
 {\it weak} concavity. Also, the lower bound for the spectrum is
  obtained as a consequence of the study sets of points at which
   the asymptotic behavior of $\phi_n(x)$ is localized, i.e.
  depends on the point $x$ rather than being equal to a constant.
  Specifically, we compute the Hausdorff dimension of sets of the form
   $\{x\in X: \lim_{n\to\infty} \phi_n(x)/n=\xi(x)\}$, where $\xi$
    is a given continuous function.
   This has interesting geometric applications to fixed points
   in the asymptotic average for dynamical systems in $\R^d$,
    as well as the fine local behavior of the harmonic measure
     on conformal planar Cantor sets.
\end{abstract}

\section{Introduction} We say that $(X,T)$ is a {\it topological dynamical system}
(TDS) if $X$ is a compact metric space and $T$ is a continuous
mapping from $X$ to itself. We denote by $\M(X,T)$ the set of
invariant probability measures on~$(X,T)$.

We say that $\Phi=(\phi_n)_{n=1}^\infty$ is {\it almost additive} if
$\phi_n$ is continuous from $X$ to $\R$  and there is a positive
constant $C(\Phi)>0$ such that
\begin{equation}\label{aa}
-C(\Phi)+ \phi_n+\phi_p\circ T^n\leq \phi_{n+p}\leq C(\Phi)+
\phi_n+\phi_p\circ T^n,\quad \forall\,n,p\in \N.
\end{equation}
Typical examples are the additive potential given by the sequence of
Birkhoff sums
 $(S_n\varphi=\sum_{k=0}^{n-1}\varphi\circ T^k)_{n\ge 1}$ of
 a continuous function $\varphi: X\to \R$, and more generally sequences
 of the form  $(\log \|S_nM\|)_{n\ge 1}$, where  $(S_nM)_{n\ge 1}$ is
   the  sequence of Birkhoff products
   $(M\circ T^{n-1})\cdots (M\circ T)\cdot M$ associated
   with a continuous function $M$ from $X$ to the set of positive square matrices.

By subadditivity, for every $\mu\in \M(X,T)$, $\displaystyle
\Phi_*(\mu):=\lim_{n\to\infty} \int_X \frac{\phi_n}{n}\,
\text{d}\mu$ exists, and we define the compact  convex set $
L_\Phi=\{ \Phi_*(\mu):\mu\in \M(X,T)\}. $ We denote by
$\C_{aa}(X,T)$ the collection of almost additive potentials on~$X$.

The ergodic theorem naturally raises the following question. Given
$\Phi$ an almost additive potential taking values in $\R^d$ (this
means that $\Phi=(\Phi^1,\cdots,\Phi^d)$ with each $\Phi^i\in
\C_{aa}(X,T)$) and $\xi:X\to\R^d$ a continuous function, what is the
Hausdorff dimension of the set
$$
E_\Phi(\xi):=\Big \{x\in X:
\lim_{n\to\infty}\frac{\phi_n(x)}{n}=\xi(x)\Big \}?
$$
When $\xi(x)\equiv\alpha$ is constant, this question has been solved
for some $C^{1+\varepsilon}$ conformal dynamical systems, sometimes
assuming restrictions on the regularity of $\Phi$, and this problem
is known as the multifractal analysis of Birkhoff averages, and more
generally almost additive potentials \cite{Collet,Rand,PeW,PW,P,
O,FF,BS,FFW,BSS,FLW,FeLa02,Fen03,Olsen,BD}. Moreover,
 the optimal results are expressed in terms of a variational
 principle of the following form:
 $E_\Phi(\alpha)\neq \emptyset$ if and only
 if $\alpha\in L_\Phi=\{ \Phi_*(\mu)=
 (\Phi^1_*(\mu),\dots,\Phi^d_*(\mu)):\mu\in \M(X,T)\}$ and in this case
\begin{equation}\label{varprinc}
\dim_H E_\Phi(\alpha)=\max\left \{\frac{h_\mu(T)}{\int_X \log \|DT\|
\, \text{d}\mu}: \mu\in \M(X,T),\  \Phi_*(\mu)=\alpha\right \},
\end{equation}
where $h_\mu(T)$ is the measure theoretic entropy of $\mu$ relative
to $T$ (see \cite{BF09} for such a result in a non-conformal context).

To our best knowledge no result is known for $\dim_H E_\Phi(\xi)$
for non constant $\xi$. We are going to give an answer to this
question when $(X,T)$ is a topologically mixing subshift of finite
type endowed with a metric associated with a negative almost
additive potential, and then transfer our result to geometric
realizations on Moran sets like those studied in \cite{Bar96}, the
main examples being $C^1$ conformal repellers and $C^1$ conformal
iterated function systems (see section~\ref{examples} for precise
definitions and statements). In the setting outlined above, if $d=1$
and $\xi$ takes its values in $L_\Phi$, we find the natural
variational formula
$$
\dim_H  E_\Phi(\xi)=\max\left \{\frac{h_\mu(T)}{\int_X \log \|DT\|
\, \text{d}\mu}: \mu\in \M(X,T),\  \Phi_*(\mu)\in \xi(X)\right\}.
$$
As application of this kind of results, we obtain unexpected
properties like the following one:
 Let $d\in\N_+$ and $(m_1,\dots,m_d)$ be $d$ integers $\ge 2$.
 Let $T:[0,1]^d\to[0,1]^d$ be the mapping
  $(x_1,\dots,x_d)\mapsto (m_1x_1\pmod 1,\dots,m_d x_d\pmod 1)$. Consider
$$
\mathcal F:=\Big\{x\in [0,1]^d: \lim_{n\to\infty}
\frac{1}{n}\sum_{k=0}^{n-1} T^k x=x\Big \},
$$
the set of those points $x$ which are fixed by $T$ in the asymptotic
average. Then $\mathcal F$ is dense and of full Hausdorff dimension
in $[0,1]^d$.

Another application concerns harmonic measure. Let us consider here
the special case of the set $J=C\times C\subset \R^2$, where $C$ is
the middle third Cantor set. The harmonic measure on $J$ is the
probability measure $\omega$ such that for each
 $x\in J$ and $r>0$, $\omega (B(x,r))$ is the probability that a planar Brownian
  motion started at $\infty$ attains $J$ for the first time at a point of $B(x,r)$
  (see Section \ref{appli2} for more general examples and a reference). For $x\in J$,
   one defines the local dimension of $\omega$ at $x$ as
$\displaystyle d_\omega (x)=\lim_{r\to 0^+} {\log \omega
(B(x,r))}/{\log r} $ whenever this limit exists. Let $I$ stand for
the set of all possible local dimensions
 for $\omega$. By using the fact that $\omega$ is a Gibbs measure,  we prove that
  if $\xi:J\to\R_+$ is continuous and $\xi(J)\subset I$, then the set
  $E_\omega(\xi)=\{x\in J: d_\omega(x)=\xi(x)\}$ is dense
  in $J$ and the following variational formula holds:
$$
\dim_HE_\omega(\xi)=\sup\{\dim_H E_\omega(\alpha):\alpha\in\xi(J)\},
\text{ where $ E_\omega(\alpha)=\{x\in J: d_\omega(x)=\alpha\}.$}
$$

Our approach necessitates to revisit the case where $\xi$ is
constant.
 This brings out an interesting new property
 of the Hausdorff spectrum
 $\alpha\mapsto \dim_H E_\Phi(\alpha)$. We
 call this property {\it weak} concavity;
 it is between concavity and quasi-concavity. This structure turns out to be crucial
 in establishing our results on fixed points in the asymptotic average.

The paper is organized as follows. In Section~\ref{statements} we
give basic definitions and recalls about thermodynamic formalism,
and then state our main results  on subshift of finite type.  In
Section~\ref{examples} we give the  geometric realizations.~The
other sections are dedicated~to~the proofs.

\section{Definitions,
and main results on subshifts of finite type}\label{statements}

Section~\ref{firstdefi} introduces some additional notions related
to almost additive potentials, Section~\ref{defmetric} introduces
the metrics we will put on topologically mixing subshifts of finite
types, while Section~\ref{TF} recalls the variational principle for
almost additive potentials. Then Section~\ref{defspectra} introduces
two fundamental dimension functions in the multifractal analysis of
almost additive potentials, as well as a notion of weak concavity.
Finally Section~\ref{resultats} provides our main results on
topologically mixing subshifts of finite types.

\subsection{Definitions}

\subsubsection{Vector-valued almost additive
 potentials and some associated quantities}\label{firstdefi}

Given $\Phi\in \C_{aa}(X,T),$ define
$\Phi_{\max}:=\max(\phi_{1})+C(\Phi) $ and
$\Phi_{\min}:=\min(\phi_{1})-C(\Phi)$.

Define two  collections of special almost additive potentials on $X$
as $$
 \C_{aa}^+(X,T):=\{\Phi\in \C_{aa}(X,T): \Phi_{\min}>0\}$$ and
 $$ \C_{aa}^-(X,T):=\{\Phi\in \C_{aa}(X,T): \Phi_{\max}<0\}.
$$ These sets contain in particular the sequences of Birkhoff sums of
positive continuous functions and negative continuous functions
respectively.

For $\Phi\in\C_{aa}^-(X,T)$ we get $ \phi_{n+1}(x)\leq
\phi_n(x)+\phi_1(T^nx)+C(\Phi)\leq \phi_n(x)+\Phi_{\max}<\phi_n(x),
$ So $\{\phi_n:n\in\N\}$ is a strictly decreasing sequence of
functions.

If $\Phi=(\Phi^1,\cdots,\Phi^d)$ is such that each
$\Phi^j\in\C_{aa}(X,T)$, then we call $\Phi$ a {\it vector-valued
almost additive potential} and write $\Phi\in\C_{aa}(X,T,d)$. We
have $\Phi=(\phi_n)_{n=1}^\infty$ with
$\phi_n=(\phi_n^1,\cdots,\phi_n^d).$ If $\phi:\Sigma_A\to \R^d$ is
continuous, we define
\begin{equation}\label{normn}
\|\phi\|_n:=\sup_{x|_n=y|_n} |\phi(x)-\phi(y)|,
\end{equation}
where $|u|$ stands for the euclidean norm of $u$. For
$\Phi\in\C_{aa}(X,T,d)$,  let $\|\Phi\|_n:=\|\phi_n\|_n$.

\subsubsection{Weak Gibbs metric on subshifts of finite type}\label{defmetric}
Let $(\Sigma_A,T) $ be a topologically mixing subshift of finite
type over the alphabet $\{1,\cdots,m\}$, where $A$ is a $m\times m$
matrix with entries $0$ and $1$ such that $A^{p_0}>0$ for some
$p_0\in\N$ and $T$ is the shift map. We shall endow $\Sigma_A$ with
a metric $d_\Psi$ naturally associated with a potential
 $\Psi\in\mathcal{C}_{aa}^-(\Sigma_A,T)$. This kind of
 metrics have been considered in
 \cite{GP97} and \cite{KS04} associated with negative additive
 potentials in order to transfer to the symbolic side
 the study of some $C^1$ hyperbolic dynamics.

 Let $\Sigma_{A,n}$ be the set of the admissible words of length $n$
 and let
$\Sigma_{A,\ast}:=\bigcup_{n\geq 0} \Sigma_{A,n}.$ For
$w\in\Sigma_{A,\ast} $ and $w=w_1\cdots w_n$, we denote the length
of $w$ by $|w|=n$. Given $w\in \Sigma_{A,\ast}\cup \Sigma_{A}$ with
$|w|\geq n$, we denote $w_1\cdots w_n$ by $w|_n$.  Given
$u\in\Sigma_{A,\ast} $ and $v\in \Sigma_{A,\ast} \cup\Sigma_{A}$, if
$u_j=v_j$ for $j=1,\cdots,|u|$, then we say $u$ is a {\it prefix }
of $v$ and write $u\prec v.$ For $u=u_1\cdots u_n\in \Sigma_{A,n}$,
$u^\ast$ stands for $u|_{n-1}$. For $x, y\in \Sigma_{A,\ast}\cup
\Sigma_A$ such that $x\ne y$, $x\wedge y$ stands for the common
prefix of $x$ and $y$ of maximal length. Given $w\in\Sigma_{A,n}$,
the cylinder $[w]$ is defined as
$$
[w]:=\{x\in\Sigma_A: x|_n=w\}.
$$

Recall that  $A^{p_0}(i,j)>0$ for all $1\leq i,j\leq m$,
consequently $A^{p_0+2}(i,j)>0$. For each $i,j$ we fix
 $w(i,j)\in \Sigma_{A,p_0}$ such that $iw(i,j)j$ is admissible.
Define
\begin{equation}\label{big-xi}
\mathcal{W}:=\{w(i,j): 1\leq i,j\leq m\}.
\end{equation}

For $\Phi\in \C_{aa}(\Sigma_A,T)$ and $w\in\Sigma_{A,n}$ we define
$$
\Phi[w]:=\sup\{\exp({\phi_n(x)}):x\in [w]\}.
$$

Now we fix a $\Psi\in  \C_{aa}^-(\Sigma_A,T).$ For $x,y\in \Sigma_A$
define
$$
d_\Psi(x,y):=
\begin{cases} \Psi[x\wedge y], & \text{ if }
x\ne y\\
0, & \text{ if } x=y.\end{cases}
$$
\begin{proposition}\label{metric-sym}
 $d_\Psi$ is an ultra-metric  distance on $\Sigma_A.$
 If $x\in \Sigma_A$ and $r>0$, the closed ball
$\overline{B(x,r)}$ is the cylinder $[x|_n]$, where $n$ is the
unique integer such that $\Psi[x|_{n-1}]> r$ and $\Psi[x|_n]\leq r$.
Each cylinder $[w]$ is a ball with $\mathrm{diam}([w])=\Psi[w]$.
\end{proposition}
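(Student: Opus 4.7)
The whole argument will rest on a \emph{monotonicity along prefixes} property of the set function $w\mapsto \Psi[w]$, which replaces the usual geometric estimate on cylinder sizes in the classical Hölder setting. Iterating the almost-additivity bound $\psi_{k+1}\leq \psi_k+\psi_1\circ T^k+C(\Psi)\leq \psi_k+\Psi_{\max}$ yields, for $u\prec w$ with $|w|=|u|+k$ and every $x\in[w]$,
$$
\psi_{|w|}(x)\leq \psi_{|u|}(x)+k\,\Psi_{\max};
$$
taking exponentials and suprema over $[w]\subset[u]$, and using $\Psi_{\max}<0$, gives $\Psi[w]\leq e^{k\Psi_{\max}}\Psi[u]$. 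In particular the sequence $n\mapsto \Psi[x|_n]$ is strictly decreasing at a geometric rate and tends to $0$, which is the one lemma I will use over and over.

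The metric axioms for $d_\Psi$ then follow painlessly. Symmetry and positivity are built into the definition, and for the ultrametric inequality at three distinct points $x,y,z$ I set $u=x\wedge y$, $v=y\wedge z$ and observe that both $u$ and $v$ are prefixes of $y$, so that (say) $u\prec v$. Hence $u$ is also a common prefix of $x$ and $z$, i.e.\ $u\preceq x\wedge z$, and the monotonicity lemma gives
$$
d_\Psi(x,z)=\Psi[x\wedge z]\leq \Psi[u]=\max\bigl(\Psi[u],\Psi[v]\bigr)=\max\bigl(d_\Psi(x,y),d_\Psi(y,z)\bigr).
$$

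For the ball description, fix $x\in\Sigma_A$ and $r>0$; the strict decrease and the limit $\Psi[x|_k]\to 0$ supply a unique $n\geq 1$ with $\Psi[x|_{n-1}]>r\geq \Psi[x|_n]$. Both inclusions in $\overline{B(x,r)}=[x|_n]$ reduce to the monotonicity lemma: if $y\in[x|_n]$ then $x|_n\preceq x\wedge y$ forces $\Psi[x\wedge y]\leq r$, while if $d_\Psi(x,y)\leq r<\Psi[x|_{n-1}]$ with $|x\wedge y|<n$, then $x\wedge y\prec x|_{n-1}$ forces the contradictory $\Psi[x\wedge y]\geq \Psi[x|_{n-1}]$. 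Applying this with $r=\Psi[w]$ and any $x\in[w]$ identifies $[w]$ with $\overline{B(x,\Psi[w])}$ and immediately yields $\mathrm{diam}([w])\leq \Psi[w]$. The matching lower bound would come from producing two admissible extensions $y,y'\in[w]$ whose common prefix is exactly $w$, whereupon $d_\Psi(y,y')=\Psi[w]$; for this last step I would invoke topological mixing (concretely, the connecting words $w(i,j)\in\mathcal{W}$) to furnish the required pair of extensions. I expect this splitting to be the only delicate point of the argument, since it relies on the primitivity of $A$ rather than on the elementary monotonicity machinery that drives the rest of the proof.
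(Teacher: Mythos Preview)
The paper omits this proof entirely, calling it elementary, so there is nothing to compare your argument against. Your derivation of the prefix-monotonicity lemma, the ultrametric inequality, and the identification of closed balls with cylinders is correct and is the natural route.

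The one genuine gap lies in the diameter equality. You rightly note that $\mathrm{diam}([w])\geq\Psi[w]$ would require two points $y,y'\in[w]$ with $y\wedge y'=w$, i.e.\ two admissible continuations of $w$ that differ already at position $|w|+1$. Topological mixing does \emph{not} supply this: in a primitive subshift a state may have a unique successor. Over the alphabet $\{1,2\}$ with $A=\bigl(\begin{smallmatrix}0&1\\1&1\end{smallmatrix}\bigr)$ one has $A^2>0$, yet every word $w$ ending in $1$ forces the next letter to be $2$, so $[w]=[w2]$ as sets and your own strict monotonicity gives $\mathrm{diam}([w])\leq\Psi[w2]<\Psi[w]$. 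The connecting words in $\mathcal W$ only guarantee extensions that diverge within $p_0$ steps, hence some $u\succ w$ with $|u|-|w|\leq p_0$ for which $\mathrm{diam}([w])\geq\Psi[u]$; combined with almost additivity and the variation bound $\|\Psi\|_{|w|}=o(|w|)$ this yields $\log\mathrm{diam}([w])=\log\Psi[w]+o(|w|)$, but not exact equality. That weaker estimate is precisely what the paper actually uses later (for instance $\mathrm{diam}([x|_n])=\exp(\psi_n(x)+o(n))$ in Step~4 of the proof of Proposition~\ref{lower-unify-func}), so the equality asserted in the proposition is a harmless overstatement, and the version you can prove is what is needed.
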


The proof is elementary and we omit it.

For the metric space $(\Sigma_A,d_\Psi)$ we define
$${\mathcal B}_n(\Psi)=\{w\in\Sigma_{A,\ast}:[w]
\text{ is a closed ball of } \Sigma_A \text{ with radius }
e^{-n}\}\quad(n\ge 0).$$ It is clear that $\{[w]:w\in{\mathcal
B}_n(\Psi)\}$ is a covering of $\Sigma_A$ for each $n\ge 0$.

If we take $\Psi=(-n\log m)_{n\geq 1}$, it is ready to check that
 $d_\Psi(x,y)=m^{-|x\wedge y|},$ which is the standard metric on
 $\Sigma_A$. We denote this special metric by $d_1.$

 \subsubsection{Recalls on the thermodynamic formalism}\label{TF}
 The thermodynamic formalism for almost additive potentials has been
studied in several works
~\cite{Fal88,Bar96,FeLa02,Fen04,B,Mum06,BD,CFH}. For our purpose, we
only need to consider
 the subshift of finite type case. Let $(\Sigma_A,T)$ be a topologically mixing
  subshift of
finite type.
 Given
$\Phi\in\C_{aa}(\Sigma_A,T)$, the topological pressure can be
defined as
\begin{equation}\label{pressure-topo}
P(T,\Phi):=\lim_{n\to\infty}\frac{1}{n}\log \sum_{w\in \Sigma_{A,n}}
\exp(\sup_{x\in[w]}\phi_n(x)).
\end{equation}
The following extension of the classical variational principle valid
for additive continuous potentials (see \cite{Ruelle}) holds:

\begin{theorem}{\cite{B,BD,CFH}}\label{varia-principle}
Let $(\Sigma_A,T)$ be a topologically mixing subshift of finite
type. For any $\Phi\in\C_{aa}(\Sigma_A,T)$, we have $
P(T,\Phi)=\sup\{h_\mu(T)+\Phi_\ast(\mu): \mu\in\M(\Sigma_A,T)\}. $
\end{theorem}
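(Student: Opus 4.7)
The plan is to prove the two inequalities separately in the standard way, with the almost-additivity condition absorbing the error terms that would otherwise be automatic in the additive case.

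For the upper bound, I would fix $\mu\in\M(\Sigma_A,T)$ and work with the partition $\alpha_n=\{[w]:w\in\Sigma_{A,n}\}$. Since $\phi_n$ is continuous on the compact set $\Sigma_A$, integration against $\mu$ can be bounded by the cylinder suprema, so
\[
H_\mu(\alpha_n)+\int \phi_n\,\mathrm{d}\mu \;\le\; -\sum_{w\in\Sigma_{A,n}}\mu([w])\log\mu([w])+\sum_{w\in\Sigma_{A,n}}\mu([w])\sup_{x\in[w]}\phi_n(x).
\]
A direct application of Jensen's inequality (concavity of $\log$) turns the right-hand side into $\log\sum_{w}\exp(\sup_{[w]}\phi_n)$. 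Dividing by $n$ and taking $n\to\infty$, using $h_\mu(T)=\lim_n\tfrac{1}{n}H_\mu(\alpha_n)$ (the partition $\alpha_1$ is generating) and $\Phi_\ast(\mu)=\lim_n\tfrac{1}{n}\int\phi_n\,\mathrm{d}\mu$, yields $h_\mu(T)+\Phi_\ast(\mu)\le P(T,\Phi)$.

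For the lower bound I would follow the Misiurewicz-type construction adapted to almost additive potentials. For each $n$ pick $x_w\in[w]$ realising the supremum $\sup_{[w]}\phi_n$ for $w\in\Sigma_{A,n}$, set $Z_n=\sum_{w\in\Sigma_{A,n}}\exp(\phi_n(x_w))$, and define the atomic measures
\[
\nu_n=\frac{1}{Z_n}\sum_{w\in\Sigma_{A,n}}e^{\phi_n(x_w)}\delta_{x_w},\qquad \mu_n=\frac{1}{n}\sum_{k=0}^{n-1}T^k_\ast\nu_n.
\]
Choose a subsequence along which $\mu_{n_j}\to\mu$ weakly and $\tfrac{1}{n_j}\log Z_{n_j}\to P(T,\Phi)$; then $\mu\in\M(\Sigma_A,T)$. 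A direct computation gives $\int\phi_n\,\mathrm{d}\nu_n=\tfrac{1}{Z_n}\sum_w e^{\phi_n(x_w)}\phi_n(x_w)$ and $H_{\nu_n}(\alpha_n)+\int\phi_n\,\mathrm{d}\nu_n=\log Z_n$. Splitting $n=aq+r$ for a fixed auxiliary $q$ and using the standard entropy inequality $H_{\nu_n}(\alpha_n)\le \sum_{i=0}^{q-1} H_{T^i_\ast\nu_n}\!\bigl(\bigvee_{k=0}^{a-1}T^{-kq}\alpha_q\bigr)+O(q\log m)$ together with concavity of entropy in the measure, then passing to the weak limit and finally letting $q\to\infty$, one recovers $h_\mu(T)+\Phi_\ast(\mu)\ge P(T,\Phi)$.

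The main obstacle is converting the inequalities $H_{\nu_n}(\alpha_n)+\int\phi_n\,\mathrm{d}\nu_n=\log Z_n$ into statements about $h_\mu(T)$ and $\int\phi_q\,\mathrm{d}\mu$. In the additive case $\phi_n=S_n\varphi$ one uses $\int S_n\varphi\,\mathrm{d}\nu_n=n\int\varphi\,\mathrm{d}\mu_n$ verbatim. Here one must exploit \eqref{aa}: writing $\phi_n\le \phi_{kq}+\phi_{n-kq}\circ T^{kq}+C(\Phi)$ iteratively (and the companion lower estimate), one telescopes $\phi_n$ into blocks $\phi_q\circ T^{kq}$ with errors $O(n/q)\cdot C(\Phi)$, hence $\tfrac{1}{n}\int\phi_n\,\mathrm{d}\nu_n$ differs from $\int\phi_q\,\mathrm{d}\mu_n/q$ only by $C(\Phi)/q$ plus a boundary term. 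Taking limits $n_j\to\infty$ and $q\to\infty$ makes this error vanish, and $\int\phi_q\,\mathrm{d}\mu/q\to\Phi_\ast(\mu)$ by definition. This is the only place where almost additivity is used in a non-trivial way; once this telescoping is in hand, the remaining arguments are verbatim those of the Ruelle–Walters variational principle.
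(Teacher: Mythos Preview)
The paper does not give its own proof of this theorem: it is stated as a recall from the literature, with the citations \cite{B,BD,CFH} attached directly to the theorem heading, and no proof follows in the text. So there is no ``paper's proof'' to compare against.

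That said, your outline is the right one and matches in spirit the argument in \cite{CFH}. The upper bound via Jensen is immediate and needs no almost-additivity at all. For the lower bound, the Misiurewicz construction with atomic measures $\nu_n$ supported on one point per $n$-cylinder, then averaging to $\mu_n$ and extracting a weak limit, is exactly what is done in \cite{CFH}; the only new ingredient over the additive case is precisely the telescoping you describe, which converts $\tfrac{1}{n}\int\phi_n\,\mathrm{d}\nu_n$ into $\tfrac{1}{q}\int\phi_q\,\mathrm{d}\mu_n$ up to an $O(C(\Phi)/q)$ error. Two small points worth making explicit in a full write-up: (i) on $\Sigma_A$ the cylinders are clopen, so weak convergence $\mu_{n_j}\to\mu$ gives $H_{\mu_{n_j}}(\alpha_q)\to H_\mu(\alpha_q)$ directly, which is what lets you pass to the limit in the entropy term without invoking upper semi-continuity; (ii) the block decomposition should be done with all $q$ shifts $0\le i<q$ and then averaged, so that the resulting expression is genuinely $\int\phi_q\,\mathrm{d}\mu_n$ rather than a sum over a sparse subsequence of iterates --- your phrase ``splitting $n=aq+r$'' suggests a single offset, but the averaging over offsets is what produces $\mu_n=\tfrac{1}{n}\sum_{k=0}^{n-1}T^k_\ast\nu_n$ on the potential side as well. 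With those two clarifications the sketch is complete.
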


If $\mu\in \M(\Sigma_A,T)$ such that
$P(T,\Phi)=h_\mu(T)+\Phi_\ast(\mu),$ then $\mu$ is called an
equilibrium state of $\Phi.$ It is shown in \cite{B} that every
$\Phi\in \C_{aa}(\Sigma_A,T)$ has an equilibrium state (in fact the
result holds for more general TDS). Define
 \begin{equation}\label{equilibrium}
 U(\Sigma_A,T):=\{\Phi\in
\C_{aa}(\Sigma_A,T): \Phi \text{ has a unique equilibrium state}\}.
 \end{equation}
For instance, this set contains the sequence of Birkhoff sums of any
H\"older continuous function when $\Sigma_A$ is endowed with a
metric $d_\Psi$ (see \cite{Bow75}).

 For $\Phi^1,\cdots,\Phi^k\in \C_{aa}(\Sigma_A,T)$ and
 $q=(q_1,\cdots,q_k)\in\R^k$, define
 $$
 F(q):=P(T,q_1\Phi^1+\cdots+q_k\Phi^k).
 $$
 It is shown in \cite{BD} that if ${\rm span}\{\Phi^1,\cdots,\Phi^k\}\subset
  U(\Sigma_A,T)$, then $F(q)$ is convex and in $C^1(\R^k).$

\subsection{Two dimension functions; weak concavity}\label{defspectra}
Let us recall what is the range of those $\alpha$ such that
$E_\Phi(\alpha)\neq\emptyset$.

\begin{proposition}[\cite{FH}]\label{bsic-aa}
Let $\Phi\in\C_{aa}(\Sigma_A,T,d)$. We have $E_\Phi(\alpha)\ne
\emptyset$ if and only if $\alpha\in L_\Phi.$
\end{proposition}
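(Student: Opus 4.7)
The plan is to use the standard bridge between orbits and invariant measures via empirical averages, coupled with an almost-additivity telescoping estimate that transfers asymptotics from $\phi_n(x)/n$ to $\int \phi_p\, d\mu$ and vice versa. The computational heart of both directions is the following consequence of \eqref{aa}: writing $\mu_n^x := \frac{1}{n}\sum_{k=0}^{n-1}\delta_{T^kx}$, iterating $|\phi_{k+p}(x)-\phi_k(x)-\phi_p(T^kx)|\leq C(\Phi)$ and telescoping the resulting sum over $k=0,\ldots,n-1$ yields
\begin{equation*}
\left|\int\phi_p\, d\mu_n^x \,-\, p\,\frac{\phi_n(x)}{n}\right|\leq C(\Phi)+O_p(1/n)
\end{equation*}
for every $x\in\Sigma_A$ and $p\geq 1$, where the lower-order term comes from uniform boundedness of the $\phi_j$ for $j<p$ on the compact set $\Sigma_A$.

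For the necessity ($\Rightarrow$), assume $x_0\in E_\Phi(\alpha)$ and fix a weak-$*$ accumulation point $\mu$ of $(\mu_n^{x_0})_n$, which is $T$-invariant by the usual argument. Applying the estimate along the subsequence realizing $\mu_{n_j}^{x_0}\to\mu$, and using $\phi_{n_j}(x_0)/n_j\to\alpha$, one obtains $|\int\phi_p\, d\mu-p\alpha|\leq C(\Phi)$; dividing by $p$ and letting $p\to\infty$ gives $\Phi_*(\mu)=\alpha$, so $\alpha\in L_\Phi$.

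For the sufficiency ($\Leftarrow$), given $\alpha=\Phi_*(\mu)$ with $\mu\in\M(\Sigma_A,T)$, the natural candidate for an element of $E_\Phi(\alpha)$ is any $\mu$-generic point $x$, i.e.\ one with $\mu_n^x\to\mu$ weakly. The telescoping estimate, combined with $\int\phi_p\,d\mu_n^x\to\int\phi_p\,d\mu$, shows that both $p\limsup_n\phi_n(x)/n$ and $p\liminf_n\phi_n(x)/n$ lie within $C(\Phi)$ of $\int\phi_p\,d\mu$; hence $\limsup-\liminf\leq 2C(\Phi)/p$ for every $p$. Letting $p\to\infty$ proves that $\lim_n\phi_n(x)/n$ exists and equals $\Phi_*(\mu)=\alpha$.

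The main obstacle is producing a generic $x$ when $\mu$ is not ergodic. For ergodic $\mu$, Kingman's subadditive ergodic theorem applied to the subadditive cocycle $(\phi_n+nC(\Phi))_{n\geq 1}$ already yields $\phi_n(x)/n\to\Phi_*(\mu)$ at $\mu$-a.e.\ $x$. For non-ergodic $\mu$ one invokes a theorem of Sigmund, valid for topologically mixing subshifts of finite type thanks to the specification property, which guarantees that the set $G_\mu$ of $\mu$-generic points is non-empty (and in fact dense). Alternatively, one may use the ergodic decomposition together with Carath\'eodory's theorem to write $\alpha=\sum_{i=1}^{d+1}t_i\Phi_*(\mu_i)$ with each $\mu_i$ ergodic, and construct $x$ directly by concatenating long orbit blocks from each $\mu_i$ in proportions $t_i$, glued via the connecting words $\mathcal{W}$ of \eqref{big-xi}; the same telescoping estimate then gives $\phi_n(x)/n\to\alpha$.
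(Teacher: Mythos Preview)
Your argument is correct. The paper itself does not prove this proposition but quotes it from Feng--Huang~\cite{FH}, where the result (in the broader asymptotically sub-additive setting) is obtained by precisely the mechanism you describe: the telescoping estimate linking $\phi_n(x)/n$ to $\frac{1}{p}\int\phi_p\,d\mu$ via empirical measures, weak-$*$ limits for the forward direction, and generic points (guaranteed by specification) for the reverse. So there is nothing to compare against beyond noting that your write-up is the natural specialization of the cited proof to the almost-additive case.

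Two small remarks. First, the subadditive cocycle to which Kingman applies is $(\phi_n+C(\Phi))_{n\ge1}$, not $(\phi_n+nC(\Phi))_{n\ge1}$: from $\phi_{n+p}\le C(\Phi)+\phi_n+\phi_p\circ T^n$ one gets $(\phi_{n+p}+C(\Phi))\le(\phi_n+C(\Phi))+(\phi_p\circ T^n+C(\Phi))$, and the additive constant disappears after dividing by $n$. Second, in your alternative concatenation route, Carath\'eodory together with the ergodic decomposition only places $\alpha$ in the convex hull of the \emph{closure} of $\{\Phi_*(\nu):\nu\ \text{ergodic}\}$, so the $\mu_i$ may a priori be non-ergodic limits; one then needs the density of ergodic measures (again from specification) and a diagonal concatenation with improving approximations. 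Since your primary route via Sigmund's theorem is complete, neither point affects the overall correctness.
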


Now we introduce two functions which will turn out to take the same
values on $L_\Phi$ and provide the
 Hausdorff and packing dimensions of
the sets $E_\Phi(\alpha)$. They correspond to different point of
views to estimate these dimensions, namely box-counting of balls
intersecting $E_\Phi(\alpha)$ and variational principle for entropy
like \eqref{varprinc}. The proofs of the propositions stated in this
section are given in Section~\ref{proofs}.

\noindent {\bf (1) Box-counting type function; weakly concave  large
deviation spectrum:} fix $\Psi\in \C_{aa}^-(\Sigma_A,T)$ and
$\Phi\in \C_{aa}(\Sigma_A,T,d)$. Define  $d_\Psi$ and ${\mathcal
B}_n(\Psi)$ as above. Given  $\alpha\in L_\Phi$, $n\geq 1$ and
$\epsilon>0$, define
$$F(\alpha,n,\epsilon,\Phi,\Psi):=\Big \{u\in {\mathcal B}_n(\Psi):\
\text{ there exists}\  x\in[u] \ \text{such that }\  \Big
|\frac{\phi_{|u|}(x)}{|u|}-\alpha\Big |<\epsilon \Big \}.
$$

Let $f(\alpha,n,\epsilon,\Phi,\Psi)$ be the cardinality of
$F(\alpha,n,\epsilon,\Phi,\Psi)$.

\begin{proposition}\label{dim-formular-1} For any
$\Psi\in \C_{aa}^-(\Sigma_A,T)$, the limit
\begin{equation}\label{full-dim}
D(\Psi)=\lim_{n\to\infty}\frac{\log \#{\mathcal B}_n(\Psi)}{n}
\end{equation}
exists.  Moreover
\begin{equation}\label{ful-dim-1}
 D(\Psi)\leq (1+1/|\Psi_{\max}|)\log m.
\end{equation}
 For any $\Phi\in
\C_{aa}(\Sigma_A,T,d)$ and any $\alpha\in L_\Phi$, we have
\begin{equation}\label{large-div}
\lim_{\epsilon\to 0}\liminf_{n\to\infty} \frac{\log
f(\alpha,n,\epsilon,\Phi,\Psi)}{n}= \lim_{\epsilon\to
0}\limsup_{n\to\infty}\frac{\log
f(\alpha,n,\epsilon,\Phi,\Psi)}{n}=:\Lambda_{\Phi}^{\Psi}(\alpha)=\Lambda(\alpha).
\end{equation}
The function $\Lambda:L_\Phi\to \R$ is  upper semi-continuous.
\end{proposition}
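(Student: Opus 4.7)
The plan is to derive the three assertions from almost additivity of $\Psi$ and $\Phi$---which makes $\Psi[\cdot]$ approximately multiplicative on concatenations and $\phi_n$ approximately additive---combined with topological mixing through the connector family $\mathcal W$ of (\ref{big-xi}), and uniform continuity of the $\phi_n$ on the compact $\Sigma_A$.

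For the upper bound (\ref{ful-dim-1}) I start from the observation that every $u\in\mathcal B_n(\Psi)$ satisfies $e^{-n}<\Psi[u^*]\le e^{|u^*|\Psi_{\max}}$ (the right inequality coming from iterating~(\ref{aa}) and $\max\phi_1\le\Psi_{\max}$), hence $|u|\le n/|\Psi_{\max}|+1$; summing $m^k$ over admissible lengths $k$ up to this bound gives the claimed estimate. For existence of $D(\Psi)$ I aim at almost-superadditivity $\log\#\mathcal B_{n+p}(\Psi)\ge \log\#\mathcal B_n(\Psi)+\log\#\mathcal B_p(\Psi)-K$: given $u\in\mathcal B_n(\Psi)$ ending in letter $i$ and $v\in\mathcal B_p(\Psi)$ starting with $j$, the concatenation $\tilde w=u\,w(i,j)\,v$ is admissible and iterating~(\ref{aa}) yields constants $C_1,C_2>0$ depending only on $\Psi$ such that $C_1e^{-(n+p)}\le\Psi[\tilde w]\le C_2e^{-(n+p)}$. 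Since $\Psi[\cdot]$ loses at least a factor $e^{\Psi_{\max}}\in(0,1)$ per added letter, the unique $\hat w\in\mathcal B_{n+p}(\Psi)$ sharing a common prefix with $\tilde w$ lies within $M=M(\Psi)$ letters of $\tilde w$ in either direction, so $(u,v)\mapsto\hat w$ is at most $m^M$-to-one. Fekete's lemma applied to $\log\#\mathcal B_n(\Psi)-M\log m$ then provides $D(\Psi)=\lim\log\#\mathcal B_n(\Psi)/n$.

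For existence of $\Lambda(\alpha)$, denote $\Lambda^\pm(\alpha,\epsilon)=\liminf/\limsup_n \frac{1}{n}\log f(\alpha,n,\epsilon,\Phi,\Psi)$. Both are non-decreasing in $\epsilon$ and admit a limit as $\epsilon\to0^+$; $\lim\Lambda^-\le\lim\Lambda^+$ is trivial. For the reverse, select $u_1,\ldots,u_k\in F(\alpha,n,\epsilon/3,\Phi,\Psi)$, interleave them by the appropriate elements of $\mathcal W$, and perform the bounded $M$-letter adjustment of the previous step to obtain a unique $\hat w\in\mathcal B_{nk}(\Psi)$. Iterating (\ref{aa}) for $\Phi$ and exploiting uniform continuity of $\phi_{|u_i|}$ on the cylinder $[u_i]$, one checks that, once $n$ is large enough so that the oscillations of $\phi_{|u_i|}$ inside $[u_i]$ and the $O(k)$ corrections coming from the $k-1$ almost-additivity splittings are negligible compared to $|\hat w|\ge kn/|\Psi_{\min}|$, a suitable $y\in[\hat w]$ extending points $x_i\in[u_i]$ witnessing the $\epsilon/3$-closeness satisfies
\[
\Big|\frac{\phi_{|\hat w|}(y)}{|\hat w|}-\alpha\Big|<\epsilon,
\]
so $\hat w\in F(\alpha,nk,\epsilon,\Phi,\Psi)$. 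Injectivity of the concatenation up to multiplicity $m^M$ yields
\[
f(\alpha,nk,\epsilon,\Phi,\Psi)\ge m^{-M(k-1)}\,f(\alpha,n,\epsilon/3,\Phi,\Psi)^{\,k};
\]
taking logs, dividing by $nk$, and letting $k\to\infty$ then $n\to\infty$ gives $\Lambda^+(\alpha,\epsilon)\ge \Lambda^-(\alpha,\epsilon/3)$, and $\epsilon\to0$ closes the argument. Upper semicontinuity is then immediate: if $\alpha_j\to\alpha$ in $L_\Phi$ and $\epsilon>0$, then eventually $|\alpha_j-\alpha|<\epsilon/2$, giving $F(\alpha_j,n,\epsilon/2,\Phi,\Psi)\subset F(\alpha,n,\epsilon,\Phi,\Psi)$, hence $\Lambda(\alpha_j)\le\Lambda^+(\alpha,\epsilon)$ and $\limsup_j\Lambda(\alpha_j)\le\Lambda(\alpha)$ after $\epsilon\to 0$.

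The main obstacle is the error book-keeping in the $\Lambda(\alpha)$ step: the $k-1$ connector blocks, the bounded adjustments to land in $\mathcal B_{nk}(\Psi)$, and the oscillation of $\phi_{|u_i|}$ inside cylinders each generate small but non-trivial errors whose cumulative contribution to $\phi_{|\hat w|}(y)/|\hat w|$ must remain below the prescribed fraction of $\epsilon$, uniformly in $k$ once $n$ is chosen large enough; taking the double limit $k\to\infty$ then $n\to\infty$ in this precise order is what makes the scheme close.
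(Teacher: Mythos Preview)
Your overall scheme---concatenation through the connector family $\mathcal W$, then a Fekete-type limit---is exactly the paper's route, but two concrete points keep the argument from closing.

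First, the inequality you display, $\Lambda^+(\alpha,\epsilon)\ge \Lambda^-(\alpha,\epsilon/3)$, is the \emph{trivial} direction (since $\Lambda^-$ is monotone in $\epsilon$ and $\Lambda^-\le\Lambda^+$); what you need is the reverse, $\Lambda^-(\alpha,\epsilon)\ge \Lambda^+(\alpha,\epsilon/3)$ (or a variant with different $\epsilon$-factors). Even if that is a slip, your limit procedure does not deliver it: from $f(\alpha,nk,\epsilon)\ge m^{-M(k-1)}f(\alpha,n,\epsilon/3)^k$ you only control $f$ along the arithmetic progression $\{nk:k\ge1\}$, whereas $\Lambda^-(\alpha,\epsilon)$ is a liminf over \emph{all} integers and can a priori be strictly smaller than the liminf along that subsequence. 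The paper closes this gap with a second comparison (its inequality (II)): for an arbitrary integer $n_k$ written as $(n+\tilde c)p_k-l_k$ with $0\le l_k<n+\tilde c$, one shows $f(\alpha,(n+\tilde c)p_k,\epsilon_0/2)\le m^{\gamma_k}f(\alpha,n_k,\epsilon_0)$ with $\gamma_k=o(p_k)$, by truncating each $w\in F(\alpha,(n+\tilde c)p_k,\epsilon_0/2)$ to its prefix in $\mathcal B_{n_k}(\Psi)$ and checking the truncation stays in $F(\alpha,n_k,\epsilon_0)$. Without this ``filling'' step the double limit $k\to\infty$ then $n\to\infty$ does not yield the nontrivial inequality.

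Second, your ``bounded $M$-letter adjustment'' is false for general $\Psi\in\mathcal C_{aa}^-$. The lower estimate $C_1e^{-(n+p)}\le\Psi[\tilde w]$ you assert would require the cylinder variation $\|\Psi\|_{|u|}$ to be bounded, but for merely continuous almost-additive $\Psi$ one only has $\|\Psi\|_{|u|}=o(|u|)$; see Lemma~\ref{multiplictive}(2) and Lemma~\ref{Moran-covering}(2). Consequently the prefix $\hat w$ landing in the target $\mathcal B$-level differs from the $k$-fold concatenation by $O(k\cdot c_1(n))$ letters with $c_1(n)=o(n)$ (the paper's bound $|w'|\le p(ac_1(n)+b)$), so the multiplicity is $m^{k\cdot o_n(1)}$ rather than $m^{M(k-1)}$. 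This does not wreck the scheme---after dividing by $nk$ the error is $o_n(1)$---but your statement as written is incorrect, and the same issue affects your superadditivity argument for $D(\Psi)$: the constant $K$ must be replaced by an $o(n+p)$ term, and Fekete must be applied in its subadditive-up-to-$o(n)$ form.
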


We will prove that $\Lambda(\alpha)$ is the Hausdorff dimension of
$E_\Phi(\alpha)$ for all $\alpha\in L_\Phi$. The function $\Lambda$
has more regularity than upper semi-continuity. To make this precise
we  need several standard notations from convex analysis. Given
$A\subset\R^d$, the {\it affine hull} of $A$ is the smallest affine
subspace of $\R^d$ containing $A$ and is denoted by
$\mathrm{aff}(A).$ For a convex set $A$, we define $\mathrm{ri}(A),$
the {\it relative interior} of $A$ as
$$
\mathrm{ri}(A):=\{x\in \mathrm{aff}(A): \ \exists \epsilon>0,
(x+\epsilon B)\cap \mathrm{aff}(A)\subset A\},
$$
 where $B=B(0,1)\subset \R^d$ is the
unit open ball.  Let $A\subset \R^d$ be a convex set and $h: A\to
\R$ be a function. If
 there exists $c\geq 1$ such that for any $\alpha,\beta\in A$, we can
find $\gamma_1=\gamma_1(\alpha,\beta),
\gamma_2=\gamma_2(\alpha,{\beta})\in [c^{-1},c]$ such that for any
$\lambda\in[0,1]$
\begin{equation}\label{lower-semi-conti}
\lambda h(\alpha)+(1-\lambda)h(\beta)\leq h\Big
(\frac{\lambda\gamma_1\alpha+(1-\lambda)\gamma_2\beta}
{\lambda\gamma_1+(1-\lambda)\gamma_2}\Big ),
\end{equation}
then we call $h$  a {\it weakly concave function} on $A.$ Note that
if $c=1$, we go back to the usual concept of concave function. Also,
$h(\gamma)\ge \min (h(\alpha),h(\beta))$ if $\gamma\in
[\alpha,\beta]\subset A$, thus $h$ is quasi-concave.

 \begin{proposition}\label{regularity}
  The function $\Lambda: L_\Phi\to \R$ is
 bounded, positive and weakly concave. It is
 continuous on any closed interval $I\subset
 L_\Phi$ and  on $\mathrm{ri}(A)$, where $A\subset L_\Phi$ is any convex set.
  Consequently it is continuous on
  ${\rm ri}(L_\Phi).$ If moreover $L_\Phi$ is a convex polyhedron,
 then $\Lambda$ is continuous on $L_\Phi.$
  Assume $I=[\alpha_0,\alpha_1]\subset L_\Phi$  and $\alpha_{\max}\in
 I$ such that $\Lambda(\alpha_{\max})=
 \max\{\Lambda(\alpha): \alpha\in
 I\}$, then $\Lambda$ is decreasing from $\alpha_{\max}$ to
 $\alpha_j$, $j=0,1.$
 \end{proposition}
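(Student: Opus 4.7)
The plan is to establish weak concavity as the central step via a concatenation/counting argument on $\Psi$-balls, and to derive all remaining continuity and monotonicity properties as consequences combined with the upper semi-continuity already provided by Proposition~\ref{dim-formular-1}. Boundedness and positivity of $\Lambda$ are immediate: $f(\alpha,n,\epsilon,\Phi,\Psi)\le\#\mathcal B_n(\Psi)$ yields $\Lambda\le D(\Psi)<\infty$, while $E_\Phi(\alpha)\ne\emptyset$ from Proposition~\ref{bsic-aa} yields $f\ge 1$ eventually, hence $\Lambda\ge 0$.

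For weak concavity, fix $\alpha,\beta\in L_\Phi$ and $\lambda\in[0,1]$, and set $k=\lfloor\lambda n\rfloor$, $\ell=n-k$. Since $\Psi\in\C_{aa}^-(\Sigma_A,T)$, almost additivity forces every $u\in\mathcal B_k(\Psi)$ to have length in $[k/|\Psi_{\min}|,k/|\Psi_{\max}|+O(1)]$, a range of only $O(k)$ integers. Partitioning $F(\alpha,k,\epsilon,\Phi,\Psi)$ by the length $|u|$ and retaining the largest class produces a subfamily $F^\star_{p_k}$ of words of common length $p_k$ with $|F^\star_{p_k}|\ge f(\alpha,k,\epsilon,\Phi,\Psi)/(Ck)$; similarly $F^\star_{q_\ell}$ for $\beta$. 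After a common subsequence extraction, $k/p_k\to\gamma_\alpha$ and $\ell/q_\ell\to\gamma_\beta$ with $\gamma_\alpha,\gamma_\beta\in[|\Psi_{\max}|,|\Psi_{\min}|]$. For each $(u,v)\in F^\star_{p_k}\times F^\star_{q_\ell}$ I use the bridging word $\mathbf w:=w(u_{p_k},v_1)\in\mathcal W$ from~\eqref{big-xi} to form the admissible cylinder $[u\mathbf wv]$. Almost additivity of $\Psi$ yields $\Psi[u\mathbf wv]\asymp e^{-n}$, so each such cylinder sits inside a ball of $\mathcal B_{n+O(1)}(\Psi)$, and almost additivity of $\Phi$ yields, uniformly for $x\in[u\mathbf wv]$, $\phi_{p_k+p_0+q_\ell}(x)/(p_k+p_0+q_\ell)=(p_k\alpha+q_\ell\beta)/(p_k+q_\ell)+O(\epsilon)$, tending as $n\to\infty$ to
$$
\gamma:=\frac{\lambda\gamma_\beta\alpha+(1-\lambda)\gamma_\alpha\beta}{\lambda\gamma_\beta+(1-\lambda)\gamma_\alpha}.
$$
The length discrepancy $p_k+p_0+q_\ell-|w'|$ between the cylinder and its containing ball $[w']\in\mathcal B_{n+O(1)}(\Psi)$ is forced by almost additivity to be $O(1)$ (it satisfies $\psi_{p_k+p_0+q_\ell-|w'|}(T^{|w'|}x)=O(1)$, which combined with $\psi_m\le m\Psi_{\max}<0$ bounds the length), so each ball contains $m^{O(1)}$ such cylinders and $f(\gamma,n+O(1),O(\epsilon),\Phi,\Psi)\ge|F^\star_{p_k}||F^\star_{q_\ell}|/m^{O(1)}$. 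Passing to logs, dividing by $n$, and letting $n\to\infty$ and $\epsilon\to 0$ yields $\Lambda(\gamma)\ge\lambda\Lambda(\alpha)+(1-\lambda)\Lambda(\beta)$, which is \eqref{lower-semi-conti} with $\gamma_1=\gamma_\beta$, $\gamma_2=\gamma_\alpha$; after rescaling, both lie in $[c^{-1},c]$ with $c=\sqrt{|\Psi_{\min}|/|\Psi_{\max}|}$.

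From weak concavity combined with upper semi-continuity, continuity follows. Given convex $A\subset L_\Phi$ and $\alpha\in\mathrm{ri}(A)$, for $\alpha_n\to\alpha$ in $A$ I choose $\beta_n\in A$ so that $\alpha_n\in[\alpha,\beta_n]$ with $|\beta_n-\alpha|$ bounded below (possible since $\alpha\in\mathrm{ri}(A)$), apply weak concavity to $(\alpha,\beta_n)$ with $\lambda_n$ tuned so the weighted combination in~\eqref{lower-semi-conti} equals $\alpha_n$, and obtain $\Lambda(\alpha_n)\ge\lambda_n\Lambda(\alpha)+(1-\lambda_n)\Lambda(\beta_n)$; as $\alpha_n\to\alpha$, $\lambda_n\to 1$, and the boundedness of $\Lambda$ gives lower semi-continuity at $\alpha$. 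The same argument with $\beta$ the other endpoint of $I=[\alpha_0,\alpha_1]\subset L_\Phi$ handles continuity on $I$; continuity on $\mathrm{ri}(L_\Phi)$ is the case $A=L_\Phi$. When $L_\Phi$ is a convex polyhedron, its polyhedral tangent-cone structure at every boundary point ensures that such $\beta_n$ with uniformly positive $|\beta_n-\alpha|$ can be found in every direction of approach, so the argument extends continuity to all of $L_\Phi$. Finally, weak concavity implies quasi-concavity (by taking minima in~\eqref{lower-semi-conti}), so for $\alpha_0\le\beta\le\gamma\le\alpha_{\max}$ one obtains $\Lambda(\gamma)\ge\min(\Lambda(\beta),\Lambda(\alpha_{\max}))=\Lambda(\beta)$, giving $\Lambda$ non-decreasing on $[\alpha_0,\alpha_{\max}]$ and symmetrically non-increasing on $[\alpha_{\max},\alpha_1]$.

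The main obstacle will be executing the weak concavity concatenation cleanly: the mismatch between the $d_\Psi$-ball scale $e^{-n}$ and the associated cylinder length produces the ratios $\gamma_\alpha,\gamma_\beta$, which is precisely why ordinary concavity fails and the weaker bound~\eqref{lower-semi-conti} is necessary. The delicate points are coordinating admissibility via $\mathcal W$, the length-class reduction, the almost-additive error terms on both $\Phi$ and $\Psi$, the control of the cylinder-to-ball overlap factor $m^{O(1)}$, and ensuring that $\gamma_\alpha,\gamma_\beta$ can be chosen depending only on $(\alpha,\beta)$ and not on $\lambda$ (arranged via a compactness/diagonal extraction).
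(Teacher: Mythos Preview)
Your concatenation scheme for weak concavity has a genuine gap: by taking $k=\lfloor\lambda n\rfloor$ and $\ell=n-k$, the optimal length classes $p_k,q_\ell$ (and hence the limits $\gamma_\alpha,\gamma_\beta$ obtained after subsequence extraction) depend on $\lambda$. The definition~\eqref{lower-semi-conti} requires a \emph{single} pair $(\gamma_1,\gamma_2)=(\gamma_1(\alpha,\beta),\gamma_2(\alpha,\beta))$ that works simultaneously for every $\lambda\in[0,1]$; without this, the map $\lambda\mapsto$ (weighted point) need not be a continuous surjection onto $[\alpha,\beta]$, and both your lower-semi-continuity argument (which needs to solve for $\lambda_n$ hitting a prescribed $\alpha_n$) and the quasi-concavity step break down. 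A diagonal extraction over $\lambda$ cannot cure this, since $\lambda$ ranges over a continuum and the optimal length class at scale $\lfloor\lambda n\rfloor$ has no reason to stabilise across different $\lambda$'s along any common subsequence of $n$'s. The paper avoids the difficulty by concatenating $p$ words from $F(\alpha,n,\epsilon)$ and $q$ words from $F(\beta,n,\epsilon)$ all taken at the \emph{same} scale $n$: the length-class selection is then made once at scale $n$, the subsequence $n_k$ is chosen so that $\gamma_{n_k}(\alpha)\to\gamma(\alpha)$ and $\gamma_{n_k}(\beta)\to\gamma(\beta)$, and these limits are manifestly independent of $(p,q)$, hence of $\lambda=p/(p+q)$; one then obtains~\eqref{lower-semi-conti} for rational $\lambda$ and extends to all $\lambda$ by upper semi-continuity.

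A secondary inaccuracy: the length discrepancy between $[u\mathbf w v]$ and its containing $\Psi$-ball is $o(n)$, not $O(1)$, because the almost-additive error in $\Psi[u\mathbf w v]\asymp\Psi[u]\Psi[v]$ involves $\|\Psi\|_{|u|}=o(|u|)$ (Lemma~\ref{multiplictive}(2) and Lemma~\ref{Moran-covering}(2)). This only changes your overlap factor from $m^{O(1)}$ to $m^{o(n)}$, which is harmless after dividing $\log f$ by $n$. Once weak concavity is properly established, your derivations of continuity (via lower semi-continuity at cone points plus the upper semi-continuity of Proposition~\ref{dim-formular-1}) and of monotonicity on $[\alpha_0,\alpha_{\max}]$ and $[\alpha_{\max},\alpha_1]$ (via quasi-concavity) are correct and match the paper's.
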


 \begin{remark}{\rm  Large deviations spectra for the
 Hausdorff dimension estimation of sets like $ E_\Phi(\alpha)$
 have been considered since the first studies of
 multifractal properties of Gibbs or weak Gibbs measures
  and then extended to the study of Birkhoff averages
  \cite{Collet,Rand,BMP,PeW,PW,P,BS,O,FF,FFW,BSS,FLW}.
  Until now, in the situations where such a spectrum may be
  non-concave \cite{BSS,BD,FLW},
  no description of its regularity like that of
  Proposition~\ref{regularity} had been given.
   Moreover, the methods used in the papers mentioned above seem
   not adapted to provide this information.
 }\end{remark}

\noindent {\bf (2) Function associated with a conditional
variational principle:}  For $\alpha\in L_\Phi$ let
$$
{\mathcal E}(\alpha)={\mathcal E}_\Phi^\Psi(\alpha):=\sup\left
\{\frac{h_\mu(T)}{-\Psi_\ast(\mu)}:\mu\in\M(\Sigma_A,T)\ \
\text{such that } \ \ \Phi_\ast(\mu)=\alpha\right\}.
$$

\begin{remark}{\rm When $\Phi$ and $\Psi$ are clear from the context, most
of the time we simplify the notations $\Lambda_\Phi^\Psi$,
${\mathcal E}_\Phi^\Psi$, $F(\alpha,n,\epsilon,\Phi,\Psi),$
$f(\alpha,n,\epsilon,\Phi,\Psi)$  to $\Lambda$, ${\mathcal E},$
$F(\alpha,n,\epsilon), f(\alpha,n,\epsilon)$ respectively. We use
the full notations only when we want to emphasize the $\Phi$- and
$\Psi$-dependence of the quantities.

}
\end{remark}

\subsection{Main results on topologically mixing subshift of finite
type}\label{resultats}\

Throughout this subsection we fix $\Phi\in \C_{aa}(\Sigma_A,T,d)$
and $\Psi\in \C_{aa}^-(\Sigma_A,T)$. We work on the metric space
$(\Sigma_A,d_\Psi)$. If $E\subset (\Sigma_A,d_\Psi)$, $\dim_H E,
\dim_P E, \dim_B E $ stand for its Hausdorff, packing and box
dimensions respectively. To not
 assuming additional regularity  assumption
  for $\Phi$ and $\Psi$  is natural, since this
  flexibility on $\Psi$ makes it possible to describe a larger
  class of geometric realizations of the next results,
  and there is no special reason to considers the sets
   $E_\Phi(\xi)$ under restrictions on $\Phi$.
   However,  the proofs will use extensively approximations of almost
   additive potentials by H\"older potentials.

For convenience we write $\D(\alpha)=\D_\Phi(\alpha):=\dim_H
E_\Phi(\alpha)$.


 \begin{theorem}[{\bf Multifractal analysis of the level sets
 $E_\Phi(\alpha)$}]\label{main-one-sided}$\ $

\begin{enumerate}
\item $E_\Phi(\alpha)\ne\emptyset$ if and only if $\alpha\in L_\Phi$.
For  $\alpha\in L_\Phi$ we have
$$
\D(\alpha)= \Lambda(\alpha)={\mathcal E}(\alpha),$$ and the function
$\D$ is weakly concave.

\item $\dim_H\Sigma_A=\dim_B\Sigma_A=D(\Psi)=
\max\{\D(\alpha): \alpha\in L_\Phi\}$.
\end{enumerate}
\end{theorem}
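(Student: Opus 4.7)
The plan is to prove Theorem~\ref{main-one-sided}(1) by establishing the chain
$$
\mathcal{E}(\alpha) \le \D(\alpha) \le \Lambda(\alpha) \le \mathcal{E}(\alpha) \qquad (\alpha \in L_\Phi),
$$
and then to deduce Part~(2) via a Bowen-type identification of $D(\Psi)$. The non-emptiness statement of~(1) is exactly Proposition~\ref{bsic-aa}, and the weak concavity of $\D$ will be immediate from the identity $\D = \Lambda$ together with the weak concavity of $\Lambda$ proved in Proposition~\ref{regularity}. The easy covering bound $\D(\alpha) \le \Lambda(\alpha)$ proceeds as follows: if $x \in E_\Phi(\alpha)$, then for every $\epsilon > 0$ the unique $u \in {\mathcal B}_n(\Psi)$ containing $x$ lies in $F(\alpha,n,\epsilon)$ for all large $n$, whence
$$
E_\Phi(\alpha) \subset \bigcup_{N \ge 1}\bigcap_{n \ge N}\bigcup_{u \in F(\alpha,n,\epsilon)}[u];
$$
since each $[u]$ is a ball of $d_\Psi$-diameter at most $e^{-n}$ by Proposition~\ref{metric-sym}, the standard Hausdorff-measure estimate gives $\dim_H E_\Phi(\alpha) \le \limsup_n (\log f(\alpha,n,\epsilon))/n$, and letting $\epsilon \to 0^+$ and invoking Proposition~\ref{dim-formular-1} closes this step.

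For the variational lower bound $\mathcal{E}(\alpha) \le \D(\alpha)$, given $\eta > 0$ I would choose $\mu \in \M(\Sigma_A,T)$ with $\Phi_*(\mu) = \alpha$ and $h_\mu(T)/(-\Psi_*(\mu)) > \mathcal{E}(\alpha) - \eta$ and reduce to $\mu$ ergodic by an affine ergodic decomposition. Birkhoff's theorem gives $\mu(E_\Phi(\alpha)) = 1$, Shannon--McMillan--Breiman yields $-\log \mu([x|_n])/n \to h_\mu(T)$, and the subadditive ergodic theorem combined with a suitable oscillation estimate $\sup_{y \in [x|_n]} |\psi_n(y) - \psi_n(x)| = o(n)$ (obtained by Hölder approximation of $\Psi$) gives $\log \mathrm{diam}([x|_n])/n \to \Psi_*(\mu)$ for $\mu$-a.e.~$x$. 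Hence the lower pointwise dimension of $\mu$ is at least $h_\mu(T)/(-\Psi_*(\mu))$ on a full-$\mu$-measure subset of $E_\Phi(\alpha)$, and Billingsley's lemma delivers $\D(\alpha) \ge \mathcal{E}(\alpha) - \eta$; sending $\eta \to 0$ concludes.

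The genuinely hard step, and the main obstacle, is $\Lambda(\alpha) \le \mathcal{E}(\alpha)$. The plan is to distribute unit mass $1/f(\alpha,n,\epsilon)$ on cylinders $[u]$ with $u \in F(\alpha,n,\epsilon)$ to get a measure $\nu_{n,\epsilon}$, Birkhoff-average it into $\mu_{n,\epsilon} = n^{-1}\sum_{k<n}T^k_* \nu_{n,\epsilon}$, and extract a weak-$*$ cluster point $\mu$; a Misiurewicz/Katok-type entropy bound should yield $h_\mu(T) \ge (-\Psi_*(\mu))\Lambda(\alpha) - O(\epsilon)$ with $\Phi_*(\mu)$ forced within $\epsilon$ of $\alpha$, after which letting $\epsilon \to 0$ and using the upper semi-continuity of $\Lambda$ from Proposition~\ref{dim-formular-1} finishes the chain. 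The obstacle is that without Hölder regularity the usual Gibbs-style distortion estimates are unavailable, so one must approximate $\Phi,\Psi$ by Hölder almost additive potentials (as announced in the excerpt), execute the construction for the approximations, and control the errors uniformly. For Part~(2), $\dim_B \Sigma_A = D(\Psi)$ is immediate from~(\ref{full-dim}) and Proposition~\ref{metric-sym} (the family $\{[u]:u\in{\mathcal B}_n(\Psi)\}$ is a minimal cover of $\Sigma_A$ by balls of radius $\sim e^{-n}$), and $\max_\alpha \D(\alpha) \le \dim_H \Sigma_A \le \dim_B \Sigma_A$ is trivial. For the converse I would use Theorem~\ref{varia-principle} to identify $s^* := \sup_\mu h_\mu(T)/(-\Psi_*(\mu)) = \sup_\alpha \mathcal{E}(\alpha)$ as the unique root of Bowen's equation $P(T,s\Psi) = 0$ and compare the partition sum $\sum_{w \in \Sigma_{A,n}} \Psi[w]^{s^*}$ with $\#{\mathcal B}_n(\Psi)$ to obtain $s^* = D(\Psi)$; together with $\D = \mathcal{E}$ from~(1), this yields $D(\Psi) = \max_\alpha \D(\alpha)$, completing Part~(2).
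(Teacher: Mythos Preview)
Your overall architecture---the cycle $\mathcal{E}(\alpha)\le\D(\alpha)\le\Lambda(\alpha)\le\mathcal{E}(\alpha)$---is exactly the paper's, and your treatment of $\D\le\Lambda$ coincides with Proposition~\ref{upper-bound}. There is, however, a genuine gap in your lower bound $\mathcal{E}(\alpha)\le\D(\alpha)$. The step ``reduce to $\mu$ ergodic by an affine ergodic decomposition'' does not work as stated: if $\mu=\int\mu_x\,d\mu(x)$ is the ergodic decomposition of a measure with $\Phi_*(\mu)=\alpha$, the components $\mu_x$ will in general satisfy $\Phi_*(\mu_x)\ne\alpha$, so each $\mu_x$ is carried by $E_\Phi(\Phi_*(\mu_x))$ rather than by $E_\Phi(\alpha)$, and nothing guarantees an ergodic measure with $\Phi_*=\alpha$ \emph{and} nearly optimal $h/(-\Psi_*)$. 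The paper circumvents this by not seeking a single ergodic measure on $E_\Phi(\alpha)$ at all: it approximates the (possibly non-ergodic) maximizer by a sequence of ergodic Markov measures $\mu_\sigma$ with $\Phi_*(\mu_\sigma)\to\alpha$, $\Psi_*(\mu_\sigma)\to\Psi_*(\mu)$ and $h_{\mu_\sigma}\to h_\mu$, and then \emph{concatenates} long typical blocks for these measures to build a Moran set $\Theta\subset E_\Phi(\alpha)$ supporting a measure of lower local dimension at least $h_\mu/(-\Psi_*(\mu))$ (Proposition~\ref{lower-unify}, obtained as the constant-$\xi$ case of Proposition~\ref{lower-unify-func}). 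This concatenation is the missing idea in your sketch.

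For $\Lambda\le\mathcal{E}$ your Misiurewicz-type weak-$*$ limit is a legitimate alternative in spirit, but note that the words in $F(\alpha,n,\epsilon)\subset\mathcal{B}_n(\Psi)$ have \emph{variable} lengths $|u|\in[C_1(\Psi)n,C_2(\Psi)n]$, so the standard Misiurewicz bookkeeping (fixed-length orbit segments averaged over $k<n$) needs adaptation. The paper instead follows \cite{FFW}: after the H\"older approximation you mention, it classifies $w\in F(\alpha,n,2\epsilon,\Phi^{(k)},\Psi)$ by their $k$-block frequency vectors $\theta_w$, bounds $\#\{w:\theta_w=\theta\}$ via a nearby Markov measure $\nu_p$, and separately estimates $\log\Gamma(\theta)/h_\theta\lesssim h_{\nu_p}$ and $n/h_\theta\approx -\Psi_*(\nu_p)$ (see (\ref{card-1})--(\ref{card-2})); the variable-length issue is absorbed into the second ratio. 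For Part~(2) your Bowen-equation route is also different: the paper first treats H\"older $\Psi$ (where the equilibrium state of $D(\Psi)\Psi$ is ergodic and $\D(\Phi_*(\mu))=D(\Psi)$), then passes to general $\Psi\in\C_{aa}^-$ via the stability estimate of Lemma~\ref{Psi-dependence}, extracting a limit point $\alpha$ of $\Phi_*(\mu_n)$ with $\Lambda_\Phi^\Psi(\alpha)\ge D(\Psi)$. Your direct comparison of $\sum_w\Psi[w]^{s^*}$ with $\#\mathcal{B}_n(\Psi)$ can be made to work, but it still hinges on a weak-Gibbs comparison $\mu([w])=e^{o(|w|)}\Psi[w]^{s^*}$ for an equilibrium state of $s^*\Psi$, which for merely almost additive $\Psi$ again requires the H\"older approximation.
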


 \begin{theorem}[{\bf Localized asymptotic behavior}]\label{main-fun-level-one-sided}
 Assume $\xi:\Sigma_A\to\R^d$ is continuous
 and
$\xi(\Sigma_A)\subset \mathrm{aff}(L_\Phi)$.

\begin{enumerate}
\item $\dim_H E_\Phi(\xi)\geq
\sup\{\D(\alpha):\alpha\in \xi(\Sigma_A)\cap \mathrm{ri}(L_\Phi)\}.$

\item If $\xi(\Sigma_A)\subset L_\Phi$
then $E_\Phi(\xi)$ is dense in $\Sigma_A$.

\item If
$\sup\{\D(\alpha):\alpha\in\xi(\Sigma_A)\cap \mathrm{ri}(L_\Phi)\}=
\sup\{\D(\alpha):\alpha\in \xi(\Sigma_A)\cap L_\Phi\}$, then
$$
{\dim}_H E_\Phi(\xi)=\dim_P E_\Phi(\xi)=\sup\{\D(\alpha):\alpha\in
\xi(\Sigma_A)\cap L_\Phi\}.
$$

\item If $d=1$ and $\xi(\Sigma_A)\subset L_\Phi$, then $E_\Phi(\xi)$
is dense and
$$
 \dim_H E_\Phi(\xi)=\dim_P
E_\Phi(\xi)=\sup\{\D(\alpha):\alpha\in \xi(\Sigma_A)\}.
$$
\end{enumerate}
\end{theorem}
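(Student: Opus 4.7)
The strategy combines a nested Moran-type construction (for the lower bound~(1) and density~(2)) with a covering argument based on the large deviations spectrum (for the upper bound~(3)); (4) then follows from (3) together with (2).

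For (1), fix $\alpha\in\xi(\Sigma_A)\cap\mathrm{ri}(L_\Phi)$ and $\delta>0$. By continuity of $\D$ on $\mathrm{ri}(L_\Phi)$ (Proposition~\ref{regularity}) and of $\xi$, I choose a relatively open convex neighborhood $U\subset\mathrm{ri}(L_\Phi)$ of $\alpha$ on which $\D>\D(\alpha)-\delta$, a point $x_0\in\xi^{-1}(\alpha)$, and a cylinder $[w_0]$ around $x_0$ with $\xi([w_0])\subset U$. Then I build inductively a nested sequence $[w_0]=[u_0]\supset[u_1]\supset\cdots$ with $|u_k|=N_k\uparrow\infty$ growing fast: at stage $k$, pick any $y_k\in[u_k]$, set $\alpha_k:=\xi(y_k)\in U$, and use Proposition~\ref{dim-formular-1} together with $\D(\alpha_k)=\Lambda(\alpha_k)$ (from Theorem~\ref{main-one-sided}) to extract a family ${\mathcal F}_{k+1}$ of admissible extensions $u_{k+1}\succ u_k$ of common length $N_{k+1}$, with cardinality at least $\exp\bigl((N_{k+1}-N_k)(\Lambda(\alpha_k)-\delta)\bigr)$, such that every $y\in\bigcup_{u\in{\mathcal F}_{k+1}}[u]$ satisfies $|\phi_n(y)/n-\alpha_k|<\eta_k$ for every $n\in(N_k,N_{k+1}]$, with $\eta_k\downarrow 0$. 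A Bernoulli-type probability measure on the limit Cantor set $E=\bigcap_k\bigcup_{u\in{\mathcal F}_k}[u]$ assigning equal mass to each surviving cylinder at every level satisfies a Billingsley estimate in $(\Sigma_A,d_\Psi)$ yielding $\dim_H E\ge\D(\alpha)-2\delta$. For $x\in E$ the approximants $y_k\in[u_k]$ converge to $x$, so $\alpha_k=\xi(y_k)\to\xi(x)$ by continuity of $\xi$; combined with the scale-uniform estimate this gives $\phi_n(x)/n\to\xi(x)$, i.e.\ $E\subset E_\Phi(\xi)$. Letting $\delta\to 0$ and $\alpha$ range over $\xi(\Sigma_A)\cap\mathrm{ri}(L_\Phi)$ proves (1).

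For (2), the same inductive construction started inside an arbitrary cylinder $[w]$ produces a point of $[w]\cap E_\Phi(\xi)$; only the \emph{existence} of good words is needed (no dimension lower bound), which is provided by Proposition~\ref{bsic-aa} and the variational principle (Theorem~\ref{varia-principle}) as long as each chosen $\alpha_k=\xi(y_k)$ lies in $L_\Phi$---a condition guaranteed by the hypothesis $\xi(\Sigma_A)\subset L_\Phi$.

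For (3), fix $\epsilon>0$ and cover the compact set $\xi(\Sigma_A)\cap L_\Phi$ by finitely many balls $B(\alpha_i,\epsilon)$. If $x\in E_\Phi(\xi)$ with $\xi(x)\in B(\alpha_i,\epsilon)$, then for all large $n$, $\phi_n(x)/n\in B(\alpha_i,2\epsilon)$, so $x\in E_{i,N}:=\{y:|\phi_n(y)/n-\alpha_i|<2\epsilon\text{ for all }n\ge N\}$ for some $N$. For each $n\ge N$, $E_{i,N}$ is covered by the $d_\Psi$-balls $[u]$ with $u\in F(\alpha_i,n,2\epsilon,\Phi,\Psi)$, each of diameter $\le e^{-n}$; by Proposition~\ref{dim-formular-1}, for $\epsilon$ small and $n$ large, $\#F(\alpha_i,n,2\epsilon,\Phi,\Psi)\le\exp(n(\Lambda(\alpha_i)+\delta))$. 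Hence $\dim_H E_{i,N}$ and $\overline{\dim}_B E_{i,N}$ are both $\le\Lambda(\alpha_i)+\delta$; since packing dimension is $\sigma$-stable and dominated by upper box dimension, the same bound holds for $\dim_P\bigcup_N E_{i,N}$. Taking the maximum over $i$, then letting $\delta,\epsilon\to 0$ and invoking the upper semi-continuity of $\Lambda$ (Proposition~\ref{dim-formular-1}), I obtain $\dim_P E_\Phi(\xi)\le\sup\{\D(\alpha):\alpha\in\xi(\Sigma_A)\cap L_\Phi\}$, equal by the hypothesis of (3) to its right-hand side; combined with (1) and $\dim_H\le\dim_P$ this proves (3). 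For (4), when $d=1$ the set $L_\Phi$ is a closed interval on which $\D$ is continuous (Proposition~\ref{regularity}), so the two suprema in the hypothesis of (3) coincide, and density comes from (2). The main obstacle is the scale-uniform control $|\phi_n(y)/n-\alpha_k|<\eta_k$ for \emph{all} $n\in(N_k,N_{k+1}]$ required in the constructions for (1) and (2): the set $F(\alpha_k,n,\epsilon)$ records control at one scale only. I overcome this by further partitioning each $u_{k+1}$ into many shorter sub-blocks drawn from $F(\alpha_k,\ell_k,\epsilon)$ at an intermediate length $\ell_k$, with $N_{k+1}-N_k$ a large multiple of $\ell_k$, and patching them via the almost-additive inequality~\eqref{aa}, so that the partial sums at every intermediate scale remain close to $\alpha_k$.
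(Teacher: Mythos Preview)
Your Moran-type approach to (1) and (2) using the large-deviation sets $F(\alpha,n,\epsilon,\Phi,\Psi)$ directly is genuinely different from the paper's, which instead concatenates Gibbs (Markov) measures approximating invariant measures realizing $\mathcal{E}(\alpha_k)$ and controls the local dimension via the ergodic and Shannon--McMillan--Breiman theorems (Proposition~\ref{lower-unify-func}). Your route is more combinatorial and avoids ergodic theory; both produce a Moran subset with varying targets $\alpha_k\to\xi(x)$. One point in your write-up needs care: the sets $F(\alpha,n,\epsilon)$ and the spectrum $\Lambda$ are indexed by the $d_\Psi$-scale $n$, not by word length, whereas you describe $N_k$ as a common \emph{word} length; with that reading the cardinality bound $\exp((N_{k+1}-N_k)(\Lambda(\alpha_k)-\delta))$ and the Billingsley estimate in $(\Sigma_A,d_\Psi)$ do not match (the ratio would acquire an unwanted factor of $-\Psi_{\max}$ or $-\Psi_*$). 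This is fixable by working at $d_\Psi$-scales throughout and pigeonholing to sub-families of constant word length, exactly as in the paper's proof of Proposition~\ref{regularity}, but it must be said explicitly. Your argument for (3) is essentially Proposition~\ref{upper-bound}.

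There is a genuine gap in your argument for (4). You claim that continuity of $\D$ on the interval $L_\Phi=[\alpha_1,\alpha_2]$ forces the two suprema in the hypothesis of (3) to coincide; this is false. Take $\xi\equiv\alpha_1$: then $\xi(\Sigma_A)\cap\mathrm{ri}(L_\Phi)=\{\alpha_1\}\cap(\alpha_1,\alpha_2)=\emptyset$, so the first supremum is vacuous while the second equals $\D(\alpha_1)>0$. The paper treats exactly this boundary situation by a case split: let $\alpha_0\in\xi(\Sigma_A)$ realize $\sup_{\xi(\Sigma_A)}\D$. If $\alpha_0$ is interior, (3) applies. If $\alpha_0$ is an endpoint but is not isolated in $\xi(\Sigma_A)$, then points of $\xi(\Sigma_A)\cap(\alpha_1,\alpha_2)$ accumulate at $\alpha_0$ and continuity of $\D$ gives equality of the two suprema, so (3) again applies. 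If $\alpha_0$ is an endpoint and isolated in $\xi(\Sigma_A)$, then $\xi^{-1}(\alpha_0)$ is clopen, hence contains a cylinder $[w]$ on which $\xi\equiv\alpha_0$; thus $E_\Phi(\xi)\supset E_\Phi(\alpha_0)\cap[w]$, and since the lower-bound construction for $E_\Phi(\alpha_0)$ can be carried out inside any fixed cylinder (by the mixing property), one gets $\dim_H E_\Phi(\xi)\ge\D(\alpha_0)$ directly, bypassing (3). You need to add this last case.
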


\begin{remark}\label{connexion}{\rm

(1) In ~\cite{BSS,BD}, assuming that
$$
{\rm span}\{\Phi^1,\cdots,\Phi^d,\Psi\}\subset
  U(\Sigma_A,T),
$$ the equality $\D(\alpha)= {\mathcal E}(\alpha)$ is shown only for
$\alpha\in \mathrm{int}(L_\Phi)$, where ${\rm int}(L_\Phi)$ denotes
the interior of $L_\Phi$. The argument is strongly based on the
differentiability of the related pressure functions in these cases.

\noindent (2) In \cite{FLW}, the authors consider the case of
additive potentials $\Phi$ and $\Psi$, and work under the assumption
that $\Psi$ corresponds to a H\"older potential. They show
$\D(\alpha)= {\mathcal E}(\alpha)$ for all $\alpha\in L_\Phi$. Here
we work under weaker  assumptions, i.e.  both $\Phi$ and $\Psi$ are
almost additive.
 Also, we use a different method
 to compute the function $\D(\alpha)$,
 namely concatenation of Gibbs measures.
 Such a method has been used successfully in \cite{KS04}
 to deal with the special sets
 $E_\Psi(\alpha)$ when $\Psi$ is additive ( i.e. taking $\Phi=\Psi;$ notice that
 in this case the spectrum is
 always concave). Here,
 we need to refine such an approach in order to remove some delicate
 points in our geometric
  application to attractors of $C^1$ conformal iterated function systems.

 }\end{remark}

\begin{remark}\label{remloc}
{\rm \noindent (1) The proof of
Theorem~\ref{main-fun-level-one-sided} uses the weak concavity of
the spectrum $\D$. It also requires to concatenate
 Gibbs measures in a more elaborated way than to determine~$\D$.

 \noindent(2) In fact we shall prove a slightly more general result
 than Theorem~\ref{main-fun-level-one-sided}(1):

 (1') Suppose that
 $\xi$ is  continuous
outside a subset $E$ of $\Sigma_A$, bounded  and
$\xi(\Sigma_A)\subset \mathrm{aff}(L_\Phi)$.
 If $\dim_H E<\sup\{\D(\alpha):\alpha\in \xi
 (\Sigma_A\setminus E)\cap \mathrm{ri}(L_\Phi)\}$,
 then
 $$
 \dim_H E_\Phi(\xi)\geq \sup\{\D(\alpha):
 \alpha\in \xi (\Sigma_A\setminus E)\cap \mathrm{ri}(L_\Phi)\}.
 $$

 \noindent(3) An extension of Theorem~\ref{main-fun-level-one-sided}(4)
  is given in the final remark of Section~\ref{appli2}.}
\end{remark}

\section{Geometric results}\label{examples}

In this section we show how the main results of the previous section
can be applied to
 multifractal analysis on conformal repellers and on attractors of conformal IFS
 satisfying the strong open set condition. Such sets  fall in
 the Moran-like geometric
 constructions considered in \cite{Bar96,P}. At first
 we describe this kind of construction (Section~\ref{gengeom}).
  Then we state
  the geometric results deduced from Theorems~\ref{main-one-sided}
  and \ref{main-fun-level-one-sided}
  (Section~\ref{maingengeom}). We give our application to {\it fixed points
    in the asymptotic average for dynamical systems} in $\R^d$ in
    Section~\ref{asympave}. Finally, we give an application
to the local scaling properties of weak Gibbs measures in
Section~\ref{appli2}, special example of which is the harmonic
measure on planar conformal Cantor sets.

\subsection{General setting of geometric realization}\label{gengeom}

  Let $(\Sigma_A,T) $ be a topologically mixing subshift of finite
type over the alphabet $\{1,\cdots,m\}$ and $\Psi\in
\C_{aa}^-(\Sigma_A,T)$. Let $X$ be $\R^{d^\prime}$ or be a
connected, $d^\prime$-dimensional $C^1$ Riemannian manifold.
Consider a family of sets $\{R_w: w\in \Sigma_{A,\ast}\}$, where
each $R_w\subset X$ is a compact set with nonempty interior. We
assume that this family of compact sets satisfies the following
conditions:

(1) $R_w\subset R_{w^\prime}$ whenever $w^\prime\prec w$.

(2)\ For any integer $n>0$, the interiors of distinct $R_w, w\in
  \Sigma_{A,n}$ are disjoint.

 (3)\ Each $R_w$ contains a ball of radius $\underline{r}_w$ and is
  contained in a ball of radius $\overline{r}_w$.

  (4)\ There exists a  constant $K>1$ and a negative sequence $\eta_n=o(n)$ such that
  for every $w\in\Sigma_{A,\ast}$,
  \begin{equation}\label{conformal-basic}
  K^{-1}\exp(\eta_{|w|})\Psi[w]\leq\underline{r}_w\leq \overline{r}_w\leq K\Psi[w].
   \end{equation}

Notice that $\Psi_{\max}<0$, then
$$
{\rm diam}(R_w)\leq 2\overline{r}_w\leq 2K\Psi[w]\leq
2K\exp(|w|\Psi_{\max})\to 0,\ \  \ (|w|\to\infty).
$$

Let $\displaystyle J=\bigcap_{n\geq 0}\bigcup_{w\in
\Sigma_{A,n}}R_w.$ We call $J$ the {\it limit set of the family
$\{R_w:w\in\Sigma_{A,\ast} \}$.} We can define the coding map $\chi:
\Sigma_A\to J$ as
$\displaystyle \chi(x)=\bigcap_{n\geq 1}R_{x|_n}, \ \ \forall x\in
\Sigma_A.$
It is clear that $\chi$ is continuous and surjective when $\Sigma_A$
is endowed with standard metric $d_1$ and $J$ is endowed with the
induced metric $\rho$ from $X$.

We say that $J$ is a {\it Moran type geometric realization of
$(\Sigma_A, d_\Psi)$.}

For this kind of construction we have the following useful
observation:

\begin{proposition}\label{dim-compare-prop}
 Let $J$ be a Moran type geometric realization of
$(\Sigma_A, d_\Psi)$ with  $\Psi\in \C_{aa}^-(\Sigma_A,T)$, then for
any $E\subset J$ we have $\dim_H E=\dim_H(\chi^{-1}(E))$.
\end{proposition}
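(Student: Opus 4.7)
The strategy is to establish the two inequalities separately. The first direction is straightforward. The upper bound $\overline{r}_w \leq K\Psi[w]$ in condition~(4) makes the coding map $\chi \colon (\Sigma_A, d_\Psi) \to (J, \rho)$ Lipschitz with constant $2K$: given distinct $x, y \in \Sigma_A$ with $u := x \wedge y$, both $\chi(x)$ and $\chi(y)$ belong to $R_u$, so
\[
 \rho(\chi(x),\chi(y)) \leq \mathrm{diam}(R_u) \leq 2\overline{r}_u \leq 2K\Psi[u] = 2K\,d_\Psi(x,y).
\]
Since $\chi$ is surjective, $E = \chi(\chi^{-1}(E))$, and the Lipschitz property yields $\dim_H E \leq \dim_H \chi^{-1}(E)$ immediately.

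For the reverse inequality, the plan is to transfer covers of $E$ in $(J,\rho)$ into covers of $\chi^{-1}(E)$ in $(\Sigma_A, d_\Psi)$. Given $s > \dim_H E$ and $\tau > 0$, choose a cover $\{V_i\}$ of $E$ with $\sum_i (\mathrm{diam}\,V_i)^s < \tau$ by sets of small diameter $\delta_i$. For each $V_i$, pick a point $y_i \in V_i$ and let $\mathcal W_i$ be the collection of prefix-minimal $w \in \Sigma_{A,\ast}$ such that $R_w \subset B(y_i, 3\delta_i)$ and $R_w \cap V_i \neq \emptyset$. Then $\chi^{-1}(V_i) \subset \bigcup_{w \in \mathcal W_i}[w]$, and the pairwise incomparability of the elements of $\mathcal W_i$ together with condition~(2) makes the inscribed balls $B(z_w, \underline{r}_w) \subset R_w$ pairwise disjoint inside $B(y_i, 3\delta_i)$, yielding the volume bound $\sum_{w \in \mathcal W_i} \underline{r}_w^{d'} \leq C\delta_i^{d'}$. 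Minimality forces $R_{w^\ast}$ to meet $V_i$ while escaping $B(y_i, 3\delta_i)$, hence $\mathrm{diam}(R_{w^\ast}) \geq 2\delta_i$, which via~(4) combined with $\Psi_{\max}<0$ produces a uniform upper bound $|w| \leq C \log(1/\delta_i)$.

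The heart of the argument is to convert this $d'$-dimensional volume bound into an $s'$-dimensional estimate $\sum_{w \in \mathcal W_i}\Psi[w]^{s'} \leq C'\delta_i^{s'}$ for any fixed $s'>s$. Using the lower bound $\underline{r}_w \geq K^{-1} e^{\eta_{|w|}} \Psi[w]$ and the uniform bound on $|w|$, the hypothesis $\eta_n = o(n)$ turns the subexponential factor $e^{-\eta_{|w|}}$ into a subpolynomial factor of the form $\delta_i^{-\varepsilon}$, for any preselected $\varepsilon > 0$ and $\delta_i$ small enough. Combining this with the volume estimate, via H\"older's inequality with exponents $d'/s'$ and $d'/(d'-s')$ and a crude counting bound on $|\mathcal W_i|$, yields $\sum_{w \in \mathcal W_i}\Psi[w]^{s'} \leq C \delta_i^{s' - O(\varepsilon)}$. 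Summing over $i$, then letting $\tau \to 0$, $\varepsilon \to 0$, and finally $s \searrow \dim_H E$, gives $\dim_H \chi^{-1}(E) \leq \dim_H E$.

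The main obstacle is the last bookkeeping step: tracking the interaction between the subexponential correction $\eta_n$ and the dimensional exchange between $X$ (where the volume estimate lives) and $\Sigma_A$ (where the $\Psi[w]$-sum lives). It is precisely the assumption $\eta_n = o(n)$ rather than mere boundedness of $\eta_n$ that allows any polynomial loss $\delta_i^{-\varepsilon}$ to be absorbed into an arbitrarily small shift of the exponent, preventing a genuine loss of Hausdorff dimension in the transfer.
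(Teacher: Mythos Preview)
Your first inequality via the Lipschitz property of $\chi$ matches the paper exactly. For the converse, your approach is correct in essence but takes a more hands-on route than the paper, and the H\"older detour is unnecessary.

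The paper isolates the key mechanism in a separate lemma (Lemma~\ref{dim-compare-lem}, essentially Gatzouras--Peres): if for every $r$-ball $B$ in $J$ the preimage $\chi^{-1}(B)$ can be covered by $N(r)$ sets of $d_\Psi$-diameter at most $r$, with $\log N(r)/\log r\to 0$, then $\chi$ does not decrease Hausdorff dimension. To verify this hypothesis the paper uses the ready-made Moran cover $\mathcal B_n(\Psi)$: for $e^{-n}\le r<e^{1-n}$, set $G_B^r=\{w\in\mathcal B_n(\Psi):R_w\cap B\neq\emptyset\}$; then $\{[w]:w\in G_B^r\}$ is an $r$-cover of $\chi^{-1}(B)$, and the same disjoint-ball volume argument you use gives $\#G_B^r\le K^{d'}(e+2K)^{d'}e^{o(n)}$, i.e.\ $N(r)=r^{-o(1)}$. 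This is cleaner because the words in $\mathcal B_n(\Psi)$ already satisfy $\Psi[w]\asymp e^{-n}$ up to an $e^{o(n)}$ factor (Lemma~\ref{Moran-covering}), so one never has to track upper and lower bounds on $\Psi[w]$ separately.

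Your direct cover-transfer argument reproves the lemma in this specific setting. It works, but two remarks. First, the H\"older step with exponents $d'/s'$ and $d'/(d'-s')$ presupposes $s'<d'$; since $\dim_H(\Sigma_A,d_\Psi)$ is not yet known to be $\le d'$ at this point of the paper, you should instead use the elementary bound
\[
\sum_{w\in\mathcal W_i}\Psi[w]^{s'}\le \#\mathcal W_i\cdot\big(\max_{w\in\mathcal W_i}\Psi[w]\big)^{s'}\le \delta_i^{-O(\varepsilon)}\cdot \big(C\delta_i^{1-O(\varepsilon)}\big)^{s'},
\]
which covers all $s'$ uniformly and uses exactly the two ingredients you already established (the subpolynomial cardinality bound and $\Psi[w]\le C\delta_i^{1-O(\varepsilon)}$). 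Second, to get the cardinality bound you implicitly need a lower bound on $\underline r_w$, hence on $\Psi[w]$; this comes from $\Psi[w^\ast]\ge \delta_i/K$ together with the almost-additivity estimate $\Psi[w]\ge e^{-C(\Psi)-\|\Psi\|_{|w|-1}}\Psi[w^\ast]\min_{|v|=1}\Psi[v]$ and $\|\Psi\|_n=o(n)$ --- worth making explicit. With these adjustments your argument is complete; the paper's version simply packages the same volume count more modularly.
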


In this paper we consider two classes of Moran type geometric
realizations of $\Sigma_A.$

{\bf (1)}\ Topologically mixing $C^1$ conformal repeller $(J,g)$. We
refer the book \cite{P} for the definitions and the basic properties
related to conformal repellers. It is well known that in this case
$(J,g)$ has a Markov partition $\{R_1,\cdots,R_m\}$. For each
$w=w_1\cdots w_n,$ define $R_w:=R_{w_1}\cap
g^{-1}(R_{w_2})\cap\cdots\cap g^{-n+1}(R_{w_n})$. Define
$\psi(x)=-\log |g^\prime(\chi(x))|$ and
$\Psi=(S_n\psi)_{n=1}^{\infty}$. By the definition of $R_w$ and the
property of Markov partition, the condition (1) and (2) are checked
directly.  (3) and (4) are stated in \cite{P} (Proposition 20.2),
except that for (4) we have an additional term
$\exp(\eta_{|w|})=\exp(-\|\Psi\|_{|w|})$ (see
Section~\ref{firstdefi} for the definition of $\|\Psi\|_{|w|}$).
This is because we only assume $g$ to be continuous rather than
H\"older continuous. Thus $J$ is a Moran type geometric realization
of $(\Sigma_A, d_\Psi)$ for some primitive matrix $A$ and the
potential $\Psi.$ Moreover in this case we have $\chi\circ T=g\circ
\chi.$

{\bf (2)} Attractors of $C^1$ conformal IFS satisfying the strong
open set condition (SOSC) (see \cite{PRSS} for details). Let
$\{f_1,\cdots, f_m\}$ be  such an IFS and denote by  $J$ its
attractor. Define $ \psi(x)=\log |f_{x_1}^\prime(\chi(Tx))|\ \
\text{and}\ \ \Psi=(S_n\psi)_{n=1}^{\infty}. $ Let $V$ be an open
set such that the SOSC holds. For $w=w_1\cdots w_n,$ define
$R_w=f_{w}(\overline{V})$, where $f_{w}:=f_{w_1}\circ \cdots\circ
f_{w_n}$. Due to the SOSC,  (1) and (2) hold for $\{R_w: w\in
\Sigma_{A,\ast}\}$. Moreover, arguments similar to those used to
prove Proposition 20.2 in \cite{P}  show that (3) and (4) also hold.
Thus, $\{R_w: w\in \Sigma_{A,\ast}\}$ is a Moran type geometric
realization of $(\Sigma_A,d_\Psi)$ with potential $\Psi$. Notice
that here $\Sigma_A$ is the full shift $\Sigma_m.$ By the uniqueness
of the attractor it is easy to verify that the attractor $J$ is the
limit set of the family $\{R_w: w\in \Sigma_{A,\ast}\}$.

\subsection{Multifractal analysis on Moran type geometric realizations}
\label{maingengeom} We are going to conduct multifractal analysis on
Moran type geometric realizations, thus we need a dynamics $g$ on
$J$ so that $(J,g)$ is a factor of some $(\Sigma_A,T)$. For $C^1$
conformal repellers, there is such a natural dynamic. For the
attractor of a $C^1$ conformal IFS, there is no such one in general,
the difficulty coming from those points having several codings.
However, under the SOSC, we can naturally define such a $g$ by
removing a ``negligible" part of $J$:

Let $\{f_1,\cdots,f_m\}$ be a $C^1$ conformal IFS satisfying the
SOSC. Let $V$ be an open set such that the SOSC holds. By
\cite{PRSS},
 such an open set always exists as soon as the mappings $f_i$ are
  $C^{1+\epsilon}$ and the OSC holds. Define
$\widetilde Z_\infty:=\bigcup_{w\in \Sigma_{A,\ast}} f_w(
\partial V)$ and $
Z_\infty:=\chi^{-1}(\widetilde Z_\infty)$. We have the following
lemma (proved in Section~\ref{sec7}):

\begin{lemma}\label{boundary}
 The set $\Sigma_{A}\setminus  Z_\infty$ is not empty and
 $\chi: \Sigma_{A}\setminus  Z_\infty\to J\setminus
 \widetilde Z_\infty$ is a bijection. Moreover
 $T(\Sigma_A\setminus Z_\infty)\subset \Sigma_A\setminus
 Z_\infty$, $T(Z_\infty)\subset Z_\infty$ and for any
 Gibbs measure $\mu$ on $\Sigma_A$ we have $\mu(Z_\infty)=0.$
\end{lemma}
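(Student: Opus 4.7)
The plan is to establish the four assertions in the order (c) the inclusions $T(Z_\infty)\subset Z_\infty$ and $T(\Sigma_A\setminus Z_\infty)\subset\Sigma_A\setminus Z_\infty$, (b) bijectivity of $\chi$, (d) $\mu(Z_\infty)=0$, and finally (a) non-emptiness of $\Sigma_A\setminus Z_\infty$ as a consequence of (d). The geometric backbone of every step is the following immediate consequence of the SOSC: if $i\neq j$ then $f_i(V)\cap f_j(\bar V)=\emptyset$, because any point of $f_j(\partial V)=\partial f_j(V)$ is a limit of points of $f_j(V)$, which is disjoint from the open set $f_i(V)$. Equivalently, $f_i(\bar V)\cap f_j(\bar V)\subset f_i(\partial V)\cap f_j(\partial V)$ for $i\neq j$.

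For (c) I use the identity $\chi(x)=f_{x_1}(\chi(Tx))$. Writing $\chi(x)=f_v(z)$ with $z\in\partial V$, one splits into cases: if $|v|\geq 1$ and $v_1=x_1$, injectivity of $f_{x_1}$ gives $\chi(Tx)=f_{v^*}(z)\in\widetilde Z_\infty$, where $v^*$ drops the first letter; otherwise (either $|v|=0$ or $v_1\neq x_1$) the backbone fact applied to $(i,j)=(x_1,v_1)$, or to $\chi(x)\in\partial V\cap f_{x_1}(\bar V)$, shows $\chi(x)\in f_{x_1}(\partial V)$, whence $\chi(Tx)\in\partial V\subset\widetilde Z_\infty$. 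This proves $T(Z_\infty)\subset Z_\infty$. For the other inclusion, suppose $\chi(Tx)\in\widetilde Z_\infty$; using the backbone fact one may rewrite $\chi(Tx)=f_v(z)$ with $v$ starting with $x_2$ (when $v$ is empty one replaces it by $v=x_2$), so that $x_1v$ is admissible, and then $\chi(x)=f_{x_1v}(z)\in\widetilde Z_\infty$. For (b), surjectivity of $\chi$ onto $J\setminus\widetilde Z_\infty$ is built into the definition of $Z_\infty$; for injectivity, if $\chi(x)=\chi(x')$ with $x_n\neq x'_n$ at the first disagreement, the common image lies in $f_{x|_n}(\bar V)\cap f_{x'|_n}(\bar V)$, and the backbone fact forces it into $f_{x|_n}(\partial V)\subset\widetilde Z_\infty$, contradicting $x\notin Z_\infty$.

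The main obstacle is (d). From (c) one has $Z_\infty\subset\bigcup_{k\geq 0}T^{-k}(\tilde P)$, where $\tilde P:=\chi^{-1}(\partial V)$, so by $T$-invariance of $\mu$ it suffices to show $\mu(\tilde P)=0$. The $|v|=0$ case of (c) gives $T(\tilde P)\subset\tilde P$, hence $\tilde P\subset T^{-1}(\tilde P)$; combined with $\mu(T^{-1}\tilde P)=\mu(\tilde P)$ this forces $\tilde P=T^{-1}(\tilde P)$ modulo $\mu$. Since any Gibbs measure for an almost additive potential on a topologically mixing SFT is ergodic (via the standard exponential mixing argument based on the Gibbs property and mixing of the shift), one has $\mu(\tilde P)\in\{0,1\}$. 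To rule out $\mu(\tilde P)=1$, I show that every cylinder $[w]$ intersects $\Sigma_A\setminus\tilde P$: picking $y_0=\chi(x_0)\in V\cap J$ (nonempty by SOSC), and by topological mixing an admissible word $u$ with $wux_0\in\Sigma_A$, one has $\chi(wux_0)=f_{wu}(y_0)\in f_{wu}(V)\subset V$, so $\chi(wux_0)\notin\partial V$. Full support of $\mu$ yields $\mu(\Sigma_A\setminus\tilde P)>0$, ruling out $\mu(\tilde P)=1$. Assertion (a) is then trivial since $\mu(Z_\infty)=0<1=\mu(\Sigma_A)$.
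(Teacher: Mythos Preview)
Your proof is correct, but it is organized differently from the paper's. The paper's argument hinges on first establishing the clean characterization $J\setminus\widetilde Z_\infty=J\cap V$, from which nonemptiness and injectivity of $\chi$ are immediate; it then uses the finite filtration $Z_n=\chi^{-1}\big(\bigcup_{|w|\le n}f_w(\partial V)\big)$, shows $T(Z_n)\subset Z_n$, and applies ergodicity plus full support to each $Z_n$. You bypass that characterization entirely and work instead with the ``backbone fact'' $f_i(\bar V)\cap f_j(\bar V)\subset f_i(\partial V)\cap f_j(\partial V)$: injectivity follows by looking at the first disagreement, the invariance statements by the case analysis on $v$, and for the measure statement you reduce to the single set $\tilde P=\chi^{-1}(\partial V)$ via $Z_\infty\subset\bigcup_{k\ge 0}T^{-k}\tilde P$ (which your case analysis in (c) gives by induction on $|v|$), then run the same ergodicity/full-support argument on $\tilde P$; nonemptiness comes out at the end from $\mu(Z_\infty)=0$. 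Both routes use exactly the same ingredients (SOSC, ergodicity, full support of Gibbs measures); the paper's version isolates a conceptual identity, while yours is more computational but slightly more economical in that it only needs the single forward-invariant set $\tilde P$ rather than the whole filtration. One small remark: since for a conformal IFS the underlying shift is the full shift $\Sigma_m$, your care in rewriting $v$ to begin with $x_2$ is unnecessary---$x_1v$ is automatically admissible---though the extra argument is harmless and would matter in a genuine SFT setting.
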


By the previous lemma we can define the mapping $\widetilde g:
J\setminus \widetilde Z_\infty\to J\setminus \widetilde Z_\infty$
as
 $\widetilde g(x)=\chi \circ T\circ \chi^{-1}$.
  By construction we have $\chi\circ T= \widetilde g\circ\chi$
  over $\Sigma_{A}\setminus  Z_\infty$.

Let $J$ be a Moran type geometric realization of
$(\Sigma_A,d_\Psi)$. We set $\widetilde J=J$ when $J$ is a $C^1$
conformal repeller and $\widetilde J=J\setminus \widetilde Z_\infty$
when $J$ is the attractor of a $C^1$ conformal IFS satisfying the
SOSC.

Given a sequence of functions  $\Phi=(\phi_n)_{n=1}^\infty$ from
$\widetilde J$ to $\mathbb{R}^d$ and $\alpha \in \mathbb{R}^d$, we
set $\displaystyle E_\Phi(\alpha)=\Big \{x\in \widetilde J:
\lim_{n\to\infty}{\phi_n(x)}/{n}=\alpha\Big \}. $ We also  define $
L_\Phi=\{\alpha\in\R^d: E_\Phi(\alpha)\neq\emptyset\}. $ We must
redefine these objects because until now they were defined for
compact dynamical systems, while $\widetilde J$ may be not compact.

When $J$ is a conformal repeller the system $(J,g)$ is naturally a
TDS. For $\Phi\in \C_{aa}(J,g,d)$, if we define $\widetilde
\Phi:=(\phi_n\circ\chi)_{n=1}^{\infty}$, since $g\circ
\chi=\chi\circ T$, we have $\widetilde \Phi\in
\C_{aa}(\Sigma_A,T,d)$ with $C(\widetilde \Phi)=C(\Phi)$. And for
$\alpha\in\R^d$ we have $E_\Phi(\alpha)=\chi(E_{\widetilde
\Phi}(\alpha)).$

When $J$ is the attractor of a $C^1$ conformal IFS satisfying the
SOSC,  if $\phi$ is a continuous function from $J$ to
$\mathbb{R}^d$, it generates the additive potential
$\widetilde\Phi=(S_n\tilde\phi)_{n=1}^\infty$ on $(\Sigma_A,T)$,
where $\tilde\phi=\phi\circ\chi$, and  it also defines
 $\Phi=(S_n\phi)_{n=1}^\infty$ on $(\widetilde J,\widetilde g)$.  Then for
 $\alpha\in\R^d$ we have
$ E_\Phi(\alpha)=\chi (E_{\widetilde \Phi}(\alpha)\setminus
Z_\infty). $

Write $\D_\Phi(\alpha):=\dim_H E_\Phi(\alpha)$ for convenience.

\begin{theorem}\label{appli-one-sided}
Let $J$ be a Moran type geometric realization of
$(\Sigma_A,d_\Psi)$. If $J$ is a $C^1$ conformal repeller, let
$\Phi\in \C_{aa}(J,g,d)$ and define $\widetilde{\Phi}$ as above. If
$J$ is the attractor of a $C^1$ conformal IFS satisfying the SOSC,
let $\phi$ be a continuous map from $J$ to $\R^d$, and define the
additive potential $\widetilde\Phi=(S_n\tilde\phi)_{n=1}^\infty$ on
$(\Sigma_A,T)$ with $\tilde\phi=\phi\circ\chi$ and
$\Phi=(S_n\phi)_{n=1}^\infty$ on $(\widetilde J,\widetilde g)$. Then
\begin{enumerate}
\item $L_\Phi=L_{\widetilde\Phi}$;  for  $\alpha\in L_{\Phi}$ we have
 $\D_\Phi(\alpha)=\dim_PE_\Phi(\alpha)$ and
$$
\D_\Phi(\alpha)=\D_{\widetilde \Phi}(\alpha)=
\Lambda_{\widetilde{\Phi}}(\alpha)=\mathcal{E}_{\widetilde{\Phi}}(\alpha).$$

\item  $\dim_H J=\dim_B J=D(\Psi)=\max\{\D_\Phi(\alpha): \alpha\in L_{{\Phi}}\}$.
\end{enumerate}
\end{theorem}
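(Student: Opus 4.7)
The plan is to transfer Theorem \ref{main-one-sided} from the symbolic system $(\Sigma_A,d_\Psi)$ to the geometric realization $J$ through the coding map $\chi$, using Proposition \ref{dim-compare-prop} to pass Hausdorff dimensions across $\chi$ and Lemma \ref{boundary} to neutralize the overlap set $Z_\infty$ in the IFS case.

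First I would establish the set-theoretic identity
\begin{equation*}
\chi^{-1}(E_\Phi(\alpha)) =
\begin{cases}
E_{\widetilde\Phi}(\alpha) & \text{(conformal repeller),}\\
E_{\widetilde\Phi}(\alpha)\setminus Z_\infty & \text{(conformal IFS).}
\end{cases}
\end{equation*}
The first case is immediate from $\chi\circ T=g\circ\chi$, which forces $\phi_n\circ\chi=\widetilde\phi_n$; the second follows because $\chi\circ T=\widetilde g\circ\chi$ holds only on $\Sigma_A\setminus Z_\infty$ (Lemma \ref{boundary}), while $\chi(Z_\infty)\subset\widetilde Z_\infty$ is disjoint from $\widetilde J\supset E_\Phi(\alpha)$. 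Combined with Proposition \ref{dim-compare-prop} this yields $\dim_H E_\Phi(\alpha)=\dim_H\chi^{-1}(E_\Phi(\alpha))$, which in the repeller case is exactly $\D_{\widetilde\Phi}(\alpha)=\Lambda_{\widetilde\Phi}(\alpha)=\mathcal{E}_{\widetilde\Phi}(\alpha)$ by Theorem \ref{main-one-sided}(1), and the equality $L_\Phi=L_{\widetilde\Phi}$ is clear from the surjectivity of $\chi$.

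The core of the IFS case is to show $\dim_H\bigl(E_{\widetilde\Phi}(\alpha)\setminus Z_\infty\bigr)=\dim_H E_{\widetilde\Phi}(\alpha)=\mathcal{E}_{\widetilde\Phi}(\alpha)$. The upper bound is trivial; for the lower bound I would revisit the measure realizing $\mathcal{E}_{\widetilde\Phi}(\alpha)$ in the proof of Theorem \ref{main-one-sided}(1). It is built as a (possibly countable) concatenation of Gibbs measures, each of which charges $Z_\infty$ with zero mass by Lemma \ref{boundary}; this null property is preserved under concatenation, so the usual mass distribution principle still bounds $\dim_H(E_{\widetilde\Phi}(\alpha)\setminus Z_\infty)$ from below by $\mathcal{E}_{\widetilde\Phi}(\alpha)$. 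The same observation shows $E_{\widetilde\Phi}(\alpha)\setminus Z_\infty\neq\emptyset$ for every $\alpha\in L_{\widetilde\Phi}$, giving $L_\Phi=L_{\widetilde\Phi}$. Propagating the $Z_\infty$-null property through the concatenation step is the main delicate point.

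For $\dim_P E_\Phi(\alpha)=\Lambda_{\widetilde\Phi}(\alpha)$ I would use the box-counting interpretation of $\Lambda$ given in Proposition \ref{dim-formular-1}: for each $\epsilon>0$ and $N$ large, the truncation $\{x\in E_\Phi(\alpha):|\widetilde\phi_n(x)/n-\alpha|<\epsilon\ \forall n\ge N\}$ is covered by the blocks $\{R_w:w\in F(\alpha,n,\epsilon,\widetilde\Phi,\Psi)\}$, of diameter $\asymp e^{-n}$ by \eqref{conformal-basic} and cardinality $e^{n\Lambda_{\widetilde\Phi}(\alpha)+o(n)}$ as $\epsilon\to 0$; a standard countable-union argument yields $\dim_P E_\Phi(\alpha)\le\Lambda_{\widetilde\Phi}(\alpha)=\dim_H E_\Phi(\alpha)$, hence equality. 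For part (2), covering $J$ by $\{R_w:w\in\mathcal{B}_n(\Psi)\}$ gives $\dim_B J\le D(\Psi)$, while the interior-ball bound $\underline{r}_w\ge K^{-1}e^{\eta_{|w|}}\Psi[w]$ with $\eta_n=o(n)$ yields the matching lower bound via a separated family; finally $\dim_H J\le\dim_B J$ together with $\dim_H J\ge\D_\Phi(\alpha_0)=D(\Psi)$ at any spectrum-maximizer $\alpha_0$ (from part (1) and Theorem \ref{main-one-sided}(2)) completes the proof.
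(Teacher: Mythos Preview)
Your treatment of the conformal repeller case and of part (2) is fine and matches the paper's argument (the paper is terser for (2): it just takes $E=J$ in Proposition~\ref{dim-compare-prop} and invokes Theorem~\ref{main-one-sided}(2)).

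The gap is in the IFS case. You correctly isolate the delicate point---showing $\dim_H\big(E_{\widetilde\Phi}(\alpha)\setminus Z_\infty\big)\ge \mathcal{E}_{\widetilde\Phi}(\alpha)$---but your proposed resolution, that ``the $Z_\infty$-null property is preserved under concatenation,'' is not justified and is not obviously true. Lemma~\ref{boundary} gives $\mu(Z_\infty)=0$ for \emph{Gibbs} measures via $T$-invariance and ergodicity of each $Z_n$; the concatenated measure $\rho$ is neither $T$-invariant nor Gibbs, so this argument does not transfer. One cannot simply conclude $\rho(Z_\infty)=0$ from the fact that each building block annihilates $Z_\infty$.

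The paper avoids this issue by a different mechanism. Rather than trying to prove $\rho(Z_\infty)=0$, Proposition~\ref{lower-unify} builds the Moran set $\Theta$ so that at every concatenation stage the chosen words $v$ satisfy $[v]\cap Z=\emptyset$, where $Z=\chi^{-1}(\partial V)$ is the \emph{closed} set of Lemma~\ref{boundary}. Because each Gibbs measure gives $Z$ measure zero and $Z$ is closed, the set $V_N=\{v\in\Sigma_{A,N+1}:[v]\cap Z=\emptyset\}$ carries mass $\ge 1-\eps_j/2$ for $N$ large, so restricting to it costs only a factor $\prod_j(1-\eps_j)>0$. This yields $T^{g_j}x\notin Z$ for every $x\in\Theta$; since $J\cap V=J\setminus\widetilde Z_\infty$ (proof of Lemma~\ref{boundary}) and $Z_\infty$ is forward $T$-invariant, a single such $j$ already forces $x\notin Z_\infty$. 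Hence $\Theta\subset\Sigma_A\setminus Z_\infty$ \emph{by construction}, and then $\chi^{-1}(\chi(\Theta))=\Theta$ allows Proposition~\ref{dim-compare-prop} to push the lower bound to $E_\Phi(\alpha)$. If you want to salvage your measure-theoretic route you would essentially have to reproduce this step (a Borel--Cantelli control on the events $T^{g_j-1}x|_{L_j+1}\notin V_{L_j}$), which amounts to the paper's construction.
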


\begin{remark}{\rm
For the case of conformal repellers, the connection between
Theorem~\ref{appli-one-sided} and the other works \cite{BSS,FLW,BD}
is similar to that done in Remark~\ref{connexion}(1) and (2).}
\end{remark}

For the set $E_\Phi(\xi)$ we have the following result:

\begin{theorem}\label{appli-fun-level-one-sided}
Let $J$ be a Moran type geometric realization of
$(\Sigma_A,d_\Psi)$, which is either a $C^1$ conformal repeller or
the attractor of a $C^1$ conformal IFS satisfying the SOSC. Let
$\Phi$ and $\widetilde \Phi$ be the same as in Theorem
\ref{appli-one-sided}. Let $\xi:J\to\R^d$ be continuous and
$\displaystyle E_\Phi(\xi)=\Big\{x\in\widetilde J:
\lim_{n\to\infty}{\phi_n(x)}/{n}=\xi(x)\Big \}. $ If $\xi(J)\subset
\mathrm{aff}(L_{\Phi})$, then

\begin{enumerate}
\item  $\dim_H E_\Phi(\xi)\geq \sup\{\D_\Phi(\alpha):\alpha\in \xi(
J)\cap \mathrm{ri}(L_{{\Phi}})\},$ and $E_\Phi(\xi)$ is dense if
$\xi(J)\subset L_\Phi$.

\item  If  $\sup\{\D_\Phi(\alpha):\alpha\in \xi(J)\cap
\mathrm{ri}(L_{{\Phi}})\}= \sup\{\D_\Phi(\alpha):\alpha\in
\xi(J)\cap L_{{\Phi}}\}$, then
$$
{\dim}_H E_\Phi(\xi)=\dim_P
E_\Phi(\xi)=\sup\{\D_\Phi(\alpha):\alpha\in \xi(J)\cap L_{{\Phi}}\}.
$$

\item If $d=1$ and $\xi(J)\subset L_\Phi$, then $E_\Phi(\xi)$ is dense and
 $$
 {\dim}_H
E_\Phi(\xi)=\dim_P E_\Phi(\xi)=\sup\{\D_\Phi(\alpha):\alpha\in
\xi(J)\}.
$$
\end{enumerate}
\end{theorem}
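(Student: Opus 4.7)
The plan is to transfer Theorem \ref{main-fun-level-one-sided} from the symbolic system $(\Sigma_A,T)$ to the geometric realization $J$ by means of the coding map $\chi$ and Proposition \ref{dim-compare-prop}, while carefully handling the exceptional set $Z_\infty$ in the IFS case. First I would set $\widetilde\xi:=\xi\circ\chi:\Sigma_A\to\R^d$; this is continuous and $\widetilde\xi(\Sigma_A)=\xi(J)\subset\mathrm{aff}(L_\Phi)=\mathrm{aff}(L_{\widetilde\Phi})$ by Theorem \ref{appli-one-sided}(1). Next, using the semiconjugacy $g\circ\chi=\chi\circ T$ in the repeller case (resp. $\widetilde g\circ\chi=\chi\circ T$ on $\Sigma_A\setminus Z_\infty$ in the IFS case, by Lemma \ref{boundary}) together with $\widetilde\phi_n=\phi_n\circ\chi$, I would identify
\[
\chi^{-1}(E_\Phi(\xi))=\begin{cases}E_{\widetilde\Phi}(\widetilde\xi)&\text{(repeller case)},\\ E_{\widetilde\Phi}(\widetilde\xi)\setminus Z_\infty&\text{(IFS case)}.\end{cases}
\]

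I would then apply Proposition \ref{dim-compare-prop} (and its packing-dimension counterpart, implicit in Theorem \ref{appli-one-sided}(1)) to obtain $\dim_H E_\Phi(\xi)=\dim_H\chi^{-1}(E_\Phi(\xi))$ and similarly for $\dim_P$, reducing the problem to the symbolic sets on the right. Invoking Theorem \ref{main-fun-level-one-sided} for $(\widetilde\Phi,\widetilde\xi)$ and using $\D_{\widetilde\Phi}=\D_\Phi$ from Theorem \ref{appli-one-sided}(1) would settle items (1)--(3) in the repeller case at once. In the IFS case, the upper bounds in (2) and (3) follow from the inclusion $E_{\widetilde\Phi}(\widetilde\xi)\setminus Z_\infty\subset E_{\widetilde\Phi}(\widetilde\xi)$; for the lower bound in (1) I would exploit the fact that the proof of Theorem \ref{main-fun-level-one-sided}(1) witnesses its dimension lower bound by a measure obtained as a concatenation of Gibbs measures (cf. Remark \ref{connexion}(2)), while by Lemma \ref{boundary} every Gibbs measure vanishes on $Z_\infty$; hence that measure is supported on $E_{\widetilde\Phi}(\widetilde\xi)\setminus Z_\infty$, so the lower bound persists after the removal of $Z_\infty$.

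For the density assertions I would invoke Theorem \ref{main-fun-level-one-sided}(2) to obtain density of $E_{\widetilde\Phi}(\widetilde\xi)$ in $\Sigma_A$ (when $\xi(J)\subset L_\Phi$); since a continuous surjection between compact metric spaces sends dense sets to dense sets, this yields density of $\chi(E_{\widetilde\Phi}(\widetilde\xi))=E_\Phi(\xi)$ in $J$ in the repeller case. In the IFS case, the same local construction would give a measure with positive mass in every cylinder $[w]\subset\Sigma_A$, so that $(E_{\widetilde\Phi}(\widetilde\xi)\setminus Z_\infty)\cap[w]\neq\emptyset$; pushing forward by $\chi$ then yields density of $E_\Phi(\xi)$ in $J$. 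The main obstacle is exactly this treatment of $Z_\infty$: I would need to verify that the lower-bound and density constructions in the proof of Theorem \ref{main-fun-level-one-sided} are visibly of concatenated-Gibbs type so that Lemma \ref{boundary} applies uniformly, rather than attempting direct control of $\dim_H Z_\infty$, which need not be small in general.
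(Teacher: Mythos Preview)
Your overall strategy matches the paper's proof: define $\widetilde\xi=\xi\circ\chi$, reduce to the symbolic result via Proposition~\ref{dim-compare-prop} and Theorem~\ref{main-fun-level-one-sided}, and in the IFS case ensure the lower-bound construction avoids $Z_\infty$. The repeller case and all upper bounds go through exactly as you describe.

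There is, however, a genuine gap in your IFS lower-bound argument. You claim that the concatenated measure $\rho$ is supported on $E_{\widetilde\Phi}(\widetilde\xi)\setminus Z_\infty$ because its building blocks are Gibbs and Gibbs measures vanish on $Z_\infty$ (Lemma~\ref{boundary}). But $\rho$ is not Gibbs, and there is no general principle by which a concatenation inherits the null sets of its pieces; that step does not follow. The paper's mechanism is different and more direct: Proposition~\ref{lower-unify-func} takes as input the \emph{closed} set $Z=\chi^{-1}(\partial V)$ (not $Z_\infty$), uses that each Gibbs piece $\nu_\omega$ gives $Z$ measure zero to show that at each stage most cylinders $[v]$ satisfy $[v]\cap Z=\emptyset$, and then restricts the Moran construction to such cylinders. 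The output is a Moran set $\Theta$ with $T^{\tilde g_j}x\notin Z$ for every $x\in\Theta$ and every $j$, which forces $\Theta\subset\Sigma_A\setminus Z_\infty$ outright. So the avoidance of $Z_\infty$ is hard-wired into the set $\Theta$, not obtained as a measure-theoretic statement about $\rho$. Once you replace your heuristic by an appeal to Proposition~\ref{lower-unify-func} (as the paper does, mirroring the proof of Theorem~\ref{appli-one-sided}), the argument is complete; the density statement in the IFS case follows the same way, since the construction can be initiated in any cylinder.
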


\subsection{Application to fixed points
in the asymptotic average for dynamical systems in $\R^d$}
\label{asympave}

Suppose that $(J,g)$ is a dynamical system with $J\subset \R^d$.
 We say that $x\in J$ is a {\it fixed point of $g$
in the asymptotic average} if $\displaystyle \lim_{n\to\infty}
\frac{1}{n}\sum_{k=0}^{n-1} g^k x=x$.
 We are interested in the Hausdorff dimension of the set of all  such  points:
 $$
\mathcal{F}(J,g)=\Big\{x\in J: \lim_{n\to\infty}\frac{1}{n}
\sum_{k=0}^{n-1} g^k x=x\Big \}.
$$
If $\xi$ stands for the identity map on $J$ and $\Phi$ stands for
the additive potential
 associated with the potential $\xi$, in our setting we have
 $\mathcal{F}(J,g)=E_\Phi(\xi)$.

The set $L_\Phi$ is contained in the convex hull of $J$, and it
contains the set  of the fixed points of $g$.  An example of trivial
situation is provided by the unit circle endowed with dynamic
$g(z)=z^2$ in $\mathbb{C}$. There, $\mathcal{F}(J,g)=\{1\}$. How
about general conformal repellers and attractors of conformal IFS?
This question is non trivial in general.  We are  going to describe
a class of conformal IFS, namely self-similar generalized Sierpinski
carpets, for which the situation is non trivial and we have a
complete answer.

We consider a special self-similar IFS $\{f_1,\cdots,f_m\}$ on
$\R^d$:
 $f_j(x)=\rho_jx+c_j,  \ 0<\rho_j<1,  \ (1\leq j\leq m)$.
 We assume further the SOSC fulfills. Let $x_j$ stand for the unique fixed point of
 $f_j$ and let $J$ be the attractor  of this IFS.
 Notice that the
 mappings $f_j$ have no rotation part, thus the convex hull
 of $J$ satisfies ${\rm Co}(J)={\rm Co}\{x_1,\cdots,x_m\}=:\Delta$,
 and is a convex polyhedron. We further
 assume that ${\rm Co}(J)$ has dimension $d$ (otherwise we can define this IFS in
 a smaller
 affine subspace).

 Let $W$ stand for the open set such that the SOSC
 holds. It is ready to see that $V:=W\cap \Delta$ is also an open
 set such that SOSC holds. We can define the
 dynamics $\widetilde{g}$ on $\widetilde{J}=J\setminus \widetilde Z_\infty$,
  where $\widetilde Z_\infty$ is defined as in the previous subsection.

Now we have the following result whose proof is given in
Section~\ref{sec7}.

 \begin{theorem}\label{carpet}
  Let $\Phi={\rm id}_J.$ Then ${\mathcal F}(\widetilde{J},\widetilde{g})$
  is dense and
  $\dim_{H}{\mathcal F}(\widetilde{J},\widetilde{g})=
  \sup\{\D_\Phi(\alpha): \alpha\in J\}. $
  Moreover if the point at which $\D_\Phi$ attains its maximum belongs to $J$,
  then ${\mathcal F}(\widetilde{J},\widetilde{g})$ is of full Hausdorff dimension.
 \end{theorem}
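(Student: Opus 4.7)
The plan is to take $\xi=\mathrm{id}_J$ and $\Phi=(S_n\xi)_{n\ge 1}$, so that $\mathcal F(\widetilde J,\widetilde g)=E_\Phi(\xi)$, and to invoke Theorems~\ref{appli-one-sided} and~\ref{appli-fun-level-one-sided}. First I would identify $L_\Phi$: for any invariant $\mu$, $\Phi_\ast(\mu)=\int_J x\,d\mu(x)\in\mathrm{Co}(J)=\Delta$, so $L_\Phi\subset\Delta$; conversely each $x_j$ belongs to $L_\Phi$ since $x_j=\chi(\overline j)$ is a fixed point of $\widetilde g$, giving $\Phi_\ast(\delta_{x_j})=x_j$. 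Convexity of $L_\Phi$ then yields $L_\Phi=\Delta$, a convex polyhedron of full dimension $d$. Since $\xi(J)=J\subset L_\Phi$, Theorem~\ref{appli-fun-level-one-sided}(1) already gives that $\mathcal F(\widetilde J,\widetilde g)$ is dense.

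For the dimension formula I would apply Theorem~\ref{appli-fun-level-one-sided}(2). Its hypothesis reduces to
$$
\sup\{\D_\Phi(\alpha):\alpha\in J\cap\mathrm{int}(\Delta)\}=\sup\{\D_\Phi(\alpha):\alpha\in J\}.
$$
Because $L_\Phi=\Delta$ is a convex polyhedron, Proposition~\ref{regularity} together with Theorem~\ref{appli-one-sided}(1) ensures that $\D_\Phi$ is continuous on the whole of $\Delta$; by compactness the supremum on the right is attained at some $\alpha^\ast\in J$, so the hypothesis will follow once I show that $J\cap\mathrm{int}(\Delta)$ is dense in $J$. For the density I would exploit the SOSC: the open set $V$ can be chosen inside $\mathrm{int}(\Delta)$ (via the refinement $V=W\cap\Delta$ in the construction) and it meets $J$ at some $p$; since the OSC implies $f_w(V)\subset V$ for every $w\in\Sigma_{m,\ast}$, one has $f_w(p)\in V\cap J\subset\mathrm{int}(\Delta)\cap J$, while $f_w(p)\in f_w(\overline V)$ whose diameter tends to $0$, so $\{f_w(p):w\in\Sigma_{m,\ast}\}$ is dense in $J$. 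Applying Theorem~\ref{appli-fun-level-one-sided}(2) then delivers
$$\dim_H\mathcal F(\widetilde J,\widetilde g)=\sup\{\D_\Phi(\alpha):\alpha\in J\}.$$

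For the ``moreover'' assertion, suppose the argmax $\alpha_{\max}$ of $\D_\Phi$ on $\Delta$ lies in $J$. Then by Theorem~\ref{appli-one-sided}(2),
$$
\sup\{\D_\Phi(\alpha):\alpha\in J\}=\D_\Phi(\alpha_{\max})=\max\{\D_\Phi(\alpha):\alpha\in L_\Phi\}=\dim_H J,
$$
and the dimension formula just established yields $\dim_H\mathcal F(\widetilde J,\widetilde g)=\dim_H J$. I expect the main obstacle to be the density step, which hinges on cleanly realizing the SOSC with an open set that lies in $\mathrm{int}(\Delta)$ so that its OSC-forward invariance can be used to keep the iterates $f_w(p)$ inside $\mathrm{int}(\Delta)\cap J$; once this is in place, the remainder is a transparent continuity-plus-approximation reduction to already proven theorems.
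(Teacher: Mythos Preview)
Your proof is correct and follows essentially the same route as the paper: reduce to Theorem~\ref{appli-fun-level-one-sided}(2) via the identification $L_\Phi=\Delta$, use Proposition~\ref{regularity} and Theorem~\ref{appli-one-sided} to get continuity of $\D_\Phi$ on the polyhedron $\Delta$, and then verify the equality of the two suprema. The only difference is in that last step: the paper simply asserts ``by the structure of $J$, $B(\alpha_0,\delta)\cap J\cap\mathrm{ri}(L_\Phi)\neq\emptyset$ for any $\delta>0$'' at the maximizing boundary point, whereas you give the explicit SOSC-based argument that $J\cap\mathrm{int}(\Delta)$ is dense in $J$, which is a cleaner justification of the same fact.
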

We have the following corollary, in which the lower bound for
 the Hausdorff dimension follows directly from Theorem~\ref{carpet}
 and the upper bound follows from standard estimates based on the
 bounds provided in Section~\ref{upperbd}.
\begin{corollary}
Let $N\in \mathbb N_+$ and  let $d_1,\dots d_N$ be $N$ positive
integers. Consider $N$
 self-similar IFS without rotations components
 $\{f_1^{(j)},\cdots,f_{m_j}^{(j)}\}_{1\le j\le N}$, satisfying
 SOSC and
 living respectively in $\R^{d_j}$. Denote by $J_j$, $1\le j\le N$,
 their respective attractors as
  well as the corresponding dynamical systems $(\widetilde J_j,\widetilde g_j)$.
   Let $\widetilde J=\prod_{j=1}^N \widetilde J_j\subset \R^{\sum_{j=1}^Nd_j}$
   be endowed with the dynamics $\widetilde g=(\widetilde g_1,\dots,\widetilde g_N)$.
   We have $\dim_H\mathcal{F} (\widetilde J,\widetilde g)=
   \sum_{j=1}^N \dim_H\mathcal{F}
   (\widetilde J_j,\widetilde g_j)=\sum_{j=1}^N\sup\{\D_{\Phi_j}(\alpha): \alpha\in
   J_j\}$, where $\Phi_j={\rm Id}_{\R^{d_j}}$.
\end{corollary}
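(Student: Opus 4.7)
The proof rests on the coordinatewise structure of the dynamics. Since $\widetilde g(x_1,\dots,x_N)=(\widetilde g_1(x_1),\dots,\widetilde g_N(x_N))$, we have $\widetilde g^k x=(\widetilde g_1^k x_1,\dots,\widetilde g_N^k x_N)$, so $\frac1n\sum_{k=0}^{n-1}\widetilde g^k x\to x$ in $\R^{\sum_j d_j}$ if and only if $\frac1n\sum_{k=0}^{n-1}\widetilde g_j^k x_j\to x_j$ for each $j$. This yields the set identity
$$
\mathcal F(\widetilde J,\widetilde g)=\prod_{j=1}^N\mathcal F(\widetilde J_j,\widetilde g_j),
$$
and Theorem~\ref{carpet} applied to each factor gives $\dim_H\mathcal F(\widetilde J_j,\widetilde g_j)=\sup\{\D_{\Phi_j}(\alpha):\alpha\in J_j\}=:s_j$. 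It therefore remains to establish $\dim_H\prod_{j=1}^N\mathcal F(\widetilde J_j,\widetilde g_j)=\sum_j s_j$.

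For the lower bound I would invoke the classical Marstrand product inequality $\dim_H(A\times B)\ge\dim_H A+\dim_H B$, iterated over the $N$-fold product, which together with the identification above immediately yields $\dim_H\mathcal F(\widetilde J,\widetilde g)\ge\sum_j s_j$.

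For the upper bound, the plan is to use the chain
$$
\dim_H\prod_{j=1}^N A_j\ \le\ \overline{\dim}_B\prod_{j=1}^N A_j\ \le\ \sum_{j=1}^N\overline{\dim}_B A_j,
$$
where the second inequality is standard and obtained by taking Euclidean products of $r$-covers in each factor. This reduces matters to showing $\overline{\dim}_B\mathcal F(\widetilde J_j,\widetilde g_j)\le s_j$. The covering estimates of Section~\ref{upperbd} produce, for each $\varepsilon>0$ and every sufficiently small $r$, a symbolic cover of $\mathcal F(\widetilde J_j,\widetilde g_j)$ whose cardinality is at most $r^{-s_j-\varepsilon}$; translating it through the Moran comparison~\eqref{conformal-basic} yields a Euclidean $r$-cover of the same cardinality, and letting $\varepsilon\to 0$ gives the required box-dimension bound.

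The main obstacle is that the bounds in Section~\ref{upperbd} are phrased at scales adapted to the symbolic cylinders $[w]$, whose Euclidean diameters $\Psi_j[w]$ are not uniform at any fixed symbolic level $|w|=n$. To feed them into the Euclidean box-dimension product inequality one must group the cylinders appearing in the covering of each factor according to their diameter, partitioning each Euclidean dyadic scale $r$ into its symbolic constituents before forming the $N$-fold product cover. This scale-matching is routine once the bound is uniform in $r$ within each factor, which is precisely what Section~\ref{upperbd} provides, and it closes the gap between the two bounds.
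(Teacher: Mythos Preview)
Your product decomposition $\mathcal F(\widetilde J,\widetilde g)=\prod_{j=1}^N\mathcal F(\widetilde J_j,\widetilde g_j)$ and the lower bound via Marstrand's inequality are fine, and this matches the paper's intent (Theorem~\ref{carpet} supplies $\dim_H\mathcal F(\widetilde J_j,\widetilde g_j)=s_j$, and the product inequality does the rest).

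The upper bound, however, has a genuine gap. You reduce to showing $\overline{\dim}_B\mathcal F(\widetilde J_j,\widetilde g_j)\le s_j$, but this is false in general: Theorem~\ref{carpet} tells you that $\mathcal F(\widetilde J_j,\widetilde g_j)$ is \emph{dense} in $J_j$, so its upper box dimension equals $\dim_B J_j$, which is typically strictly larger than $s_j$ (the example $S_1$ in the paper exhibits exactly this strict inequality). Section~\ref{upperbd} does not produce a single efficient cover of $\mathcal F(\widetilde J_j,\widetilde g_j)$; what Proposition~\ref{upper-bound} actually gives is a decomposition $\mathcal F(\widetilde J_j,\widetilde g_j)\subset\bigcup_{k\ge 1}G^{(j)}(k,\eta)$ into countably many closed sets with $\overline{\dim}_B G^{(j)}(k,\eta)\le s_j+\eta$. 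This is a packing-dimension bound, not a box-dimension bound.

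The fix is straightforward once you see the issue: take products of the pieces rather than of the full sets. From
\[
\prod_{j=1}^N\mathcal F(\widetilde J_j,\widetilde g_j)\ \subset\ \bigcup_{(k_1,\dots,k_N)\in\N^N}\ \prod_{j=1}^N G^{(j)}(k_j,\eta)
\]
and the box-dimension product inequality applied to each finite product of pieces (here your scale-matching remark is relevant and routine), you get $\overline{\dim}_B\prod_j G^{(j)}(k_j,\eta)\le\sum_j(s_j+\eta)$. Countable stability of Hausdorff dimension then yields $\dim_H\prod_j\mathcal F(\widetilde J_j,\widetilde g_j)\le\sum_j s_j+N\eta$, and letting $\eta\to 0$ finishes. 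This is presumably what the paper means by ``standard estimates based on the bounds provided in Section~\ref{upperbd}''.
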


Both the previous results yield the result presented in the
introduction of the paper:
\begin{theorem}
Let $d\in\N$ and $(m_1,\dots,m_d)$ be $d$ integers $\ge 2$. Set
$J=[0,1]^d$ and let $g:J\to J$ be the mapping
$(x_1,\dots,x_d)\mapsto (m_1x_1\pmod 1,\dots,m_d x_d\pmod 1)$. Then
$\mathcal{F}(J,g)$ is dense and of full Hausdorff dimension in
$[0,1]^d$.
\end{theorem}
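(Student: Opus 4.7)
The plan is to apply the preceding Corollary, viewing $([0,1]^d,g)$ as the product of $d$ one-dimensional conformal self-similar systems. For each coordinate $i\in\{1,\dots,d\}$, I would identify $([0,1],\, x\mapsto m_ix\pmod 1)$ as coming from the self-similar IFS $\{x\mapsto (x+j)/m_i:0\le j\le m_i-1\}$ on $\R$. This IFS has no rotation part, satisfies the SOSC with open set $W=(0,1)$, and its attractor is $J_i=[0,1]$ whose convex hull $\mathrm{Co}(J_i)=[0,1]$ has full dimension $d_i=1$. Hence the setting of Theorem~\ref{carpet} and the Corollary applies to each factor, and the induced dynamics $\widetilde g_i$ on $\widetilde J_i$ coincides with $x\mapsto m_i x\pmod 1$ wherever the former is defined.

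Next I would take $\Phi_i=\mathrm{id}_{J_i}$ and use Theorem~\ref{appli-one-sided}(2) to deduce
$$\sup\{\mathcal{D}_{\Phi_i}(\alpha):\alpha\in J_i\}=\max\{\mathcal{D}_{\Phi_i}(\alpha):\alpha\in L_{\Phi_i}\}=\dim_H J_i=1,$$
using that $L_{\Phi_i}\subset\mathrm{Co}(J_i)=J_i$, so the sup over $J_i$ dominates the one over $L_{\Phi_i}$ and equals it. The Corollary then yields
$$\dim_H \mathcal{F}(\widetilde J,\widetilde g)=\sum_{i=1}^{d}\sup\{\mathcal{D}_{\Phi_i}(\alpha):\alpha\in J_i\}=d,$$
where $\widetilde J=\prod_i\widetilde J_i\subset[0,1]^d$ and $\widetilde g=(\widetilde g_1,\dots,\widetilde g_d)$.

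To transfer this to $(J,g)=([0,1]^d,g)$, I would observe that $\widetilde g$ coincides with $g$ on $\widetilde J$, and $\widetilde J$ is forward-invariant under $\widetilde g$ (by Lemma~\ref{boundary} applied in each coordinate). Consequently, for any $x\in\widetilde J$ the iterates $g^k(x)=\widetilde g^k(x)$ stay in $\widetilde J$, so the asymptotic-average fixed-point condition is the same whether read through $g$ or $\widetilde g$; this gives $\mathcal{F}(\widetilde J,\widetilde g)\subset\mathcal{F}(J,g)$ and hence $\dim_H\mathcal{F}(J,g)\geq d$, with the reverse inequality trivial. For density, Theorem~\ref{carpet} gives that $\mathcal{F}(\widetilde J_i,\widetilde g_i)$ is dense in $\widetilde J_i$, and $\widetilde J_i$ is dense in $[0,1]$ (its complement is a countable union of forward preimages of the $m_i$-adic partition points), so the product $\mathcal{F}(\widetilde J,\widetilde g)$ is dense in $[0,1]^d$, and so is $\mathcal{F}(J,g)$.

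The argument is essentially a direct invocation of the Corollary plus the elementary product structure of the Cesàro condition; the genuinely non-trivial input, which is the product formula for Hausdorff dimension of these level sets together with the full-dimension statement in each factor, is already absorbed into Theorem~\ref{carpet} and the Corollary. The only point requiring some care is confirming that the dynamics $\widetilde g$ and $g$ produce the same sets $\mathcal{F}$ on $\widetilde J$, which reduces to forward invariance of $\widetilde J$ and the coincidence of the two maps there.
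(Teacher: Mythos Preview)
Your proposal is correct and follows essentially the same approach as the paper: identify each coordinate with the self-similar IFS $\{x\mapsto (x+j)/m_i\}$ satisfying SOSC with $V=(0,1)$, apply Theorem~\ref{carpet} (via the Corollary) to each factor, and then use the product structure of the Ces\`aro fixed-point condition. The only minor differences are that the paper shows full dimension in each factor by computing $\D_\Phi(1/2)=1$ directly (law of large numbers for the measure of maximal entropy), whereas you invoke Theorem~\ref{appli-one-sided}(2); and the paper bypasses the Corollary by simply observing $\mathcal{F}(J,g)=\prod_{i=1}^d\mathcal{F}([0,1],g_{m_i})$, which immediately handles both the dimension and density (and the passage from $\widetilde J$ to $J$) in one stroke.
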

To see this, for a fixed integer $m\ge 2$ let $g_m:[0,1]\to[0,1]$ be
the mapping $x\mapsto m x\pmod 1.$

Let $(\Sigma_m,T)$ be the full shift over alphabet
$\{0,\cdots,m-1\}$, where   $\Sigma_m$ is endowed with the usual
metric  $d_\Psi(x,y)=d_1(x,y)=m^{-|x\wedge y|}. $  Define a map
$\chi:\Sigma_m\to [0,1]$ as $ \chi(x)=\sum_{n=1}^\infty {x_n}/{m^n}.
$ Then $\chi$ is continuous and surjective. Consider the IFS
$\{f_j:j=0,\cdots,m-1\}$ defined as $f_j(x)=(x+j)/m$. It is seen
that the SOSC holds with $V=(0,1)$.
 Let
$ \widetilde Z_\infty:=\Big \{\sum_{j=1}^n x_jm^{-j}: n\in\N;
x_j=0,\cdots,m-1\Big \}\cup \{1\}. $ Define the dynamics
$\widetilde{g}$ on $\widetilde J_m=[0,1]\setminus\widetilde
Z_\infty$ as in the previous section. Then it is easy to check that
$\widetilde{g}={g_m}|_{\widetilde J_m}.$ Let $\Phi={\rm
id}_{[0,1]}$.  By theorem \ref{carpet} we get $ \dim_H
\mathcal{F}(\widetilde
J_m,g_m)=\sup\{\D_{{\Phi}}(\alpha):\alpha\in[0,1]\}. $ By the law of
large number applied to the measure of maximal entropy we get
$\D_\Phi(1/2)=1.$ We conclude by noticing that
$\mathcal{F}(J,g)=\prod_{i=1}^d\mathcal{F}([0,1],g_{m_i})$.

Next we consider concrete examples of carpets in the unit square.

\subsubsection*{{\bf  Heterogeneous carpets in the unit square}}
In order to fully illustrate our purpose, we consider an IFS
$S_0=\{f_1,\cdots,f_N\}$ in $\R^2$ made of contractive similitudes
without rotations such that the squares $f_i([0,1]^2)$ form a tiling
of $[0,1]^2$. All these situations have been determined in
\cite{Brooks}.
 In this way, $]0,1[^2$ can be chosen as the open set such that the SOSC holds,
  and the boundaries of the sets $f_i(]0,1[^2)$ have big intersections.
 The  picture on the left of Figure 1 give an example of this kind of
IFS. This IFS contains $15$ dilation maps, and the dynamics on this
attractor is highly non trivial.

 Let $\Phi$ denote ${\rm Id}_{\R^2}$. For each $\emptyset\neq S\subset S_0$,
  we denote by $J_S$ the attractor of the IFS $S$.
 The dynamics $\widetilde g_S$ defined on $\widetilde J_S$ is
 the restriction  of  $\widetilde g_{S_0}$
 to $\widetilde J_{S}$. The set $\mathcal{F}(\widetilde J_{S_0},\widetilde g_{S_0})$
  is of full Hausdorff dimension,
  since $J_{S_0}=[0,1]^2$. If $S\neq S_0$, we have the variational
  formula $\dim_H \mathcal{F}
  (\widetilde J_{S},\widetilde g_{S})=\sup\{\D_{\Phi}(\alpha):\alpha\in J_S\}$,
   and in general it
  is hard to know whether $ \mathcal{F}(\widetilde J_{S},\widetilde g_{S})$
  is of full dimension or not
  in $J_S$. However, here are two  simple examples illustrating both possibilities.

 We consider the case of the regular tiling associated with the IFS
 $ S_0=\Big \{f_{i,j}:x\mapsto
 \frac{x}{3}+\frac{(i,j)}{3}:0\le i,j\le 2\Big \}$. Then,
 let $S_1=\{f_{0,0},f_{0,2},f_{2,0},f_{2,2}\}$ and $S_2=S_1\cup\{f_{1,1}\}$.
We claim that $\mathcal{F}(\widetilde J_{S_1},\widetilde g_{S_1})$
is not of full Hausdorff dimension,
 while $\mathcal{F}(\widetilde J_{S_2},\widetilde g_{S_2})$ is.

The simpler situation is that of $S_2$. In this case, $G=(1/2,1/2)$,
the center of symmetry of $J_{S_2}$
 is the fixed point of $f_{1,1}$ and it belongs to $L_{\Phi}$. Moreover,
 it is obvious that the uniform measure
 (or Parry measure) on $J_{S_2}$ is carried by the
  set $E_{\Phi}(G)$. This yields the result by Theorem~\ref{carpet},
  and $\dim_H \mathcal{F}(\widetilde J_{S_2},\widetilde g_{S_2})=\log 5/\log 3$.

In the case of $S_1$, the point $G$ is still the center of symmetry
of $J_{S_1}$, so  $\mathcal D_{\Phi}$ reaches its maximum at $G$.
However, $G$ does not belong to $J_{S_1}$.  Since $\Phi$ is H\"older
continuous and the tiling is regular, we know that $\mathcal
D_{\Phi}$ is strictly concave. By using the symmetry, one deduces
that the restriction of $\mathcal D_{\Phi}$ to $J_{S_1}$ reaches its
maximum at any of the four points $(1/3,1/3)$, $(1/3,2/3)$,
$(2/3,1/3)$ and $(2/3,2/3)$. This yields $\displaystyle \dim_H
\mathcal{F}(\widetilde J_{S_1},\widetilde
g_{S_1})=\D_\Phi((1/3,1/3))<\log 4/\log 3=\dim_H J_{S_1}$.

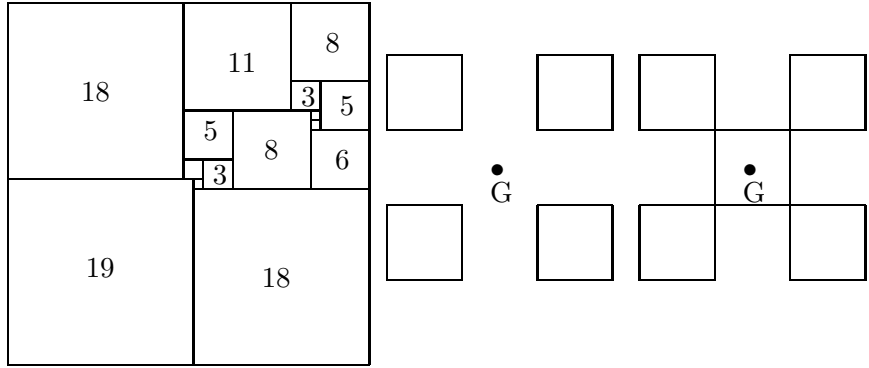
\begin{figure}[h]
\setlength{\unitlength}{0.28cm}
\begin{picture}(36,18)(0,-1)

\put(16,2){
\begin{picture}(9,9)\setlength{\unitlength}{0.5cm}
\multiput(0,0)(0,4){2}{\line(0,1){2}}
\multiput(2,0)(0,4){2}{\line(0,1){2}}
\multiput(4,0)(0,4){2}{\line(0,1){2}}
\multiput(6,0)(0,4){2}{\line(0,1){2}}
\multiput(0,0)(4,0){2}{\line(1,0){2}}
\multiput(0,2)(4,0){2}{\line(1,0){2}}
\multiput(0,4)(4,0){2}{\line(1,0){2}}
\multiput(0,6)(4,0){2}{\line(1,0){2}}
\put(2.75,2.75){$\bullet$} \put(2.75,2.1){G}
\end{picture}
}

\put(28,2){
\begin{picture}(9,9)\setlength{\unitlength}{0.5cm}
\multiput(0,0)(0,4){2}{\line(0,1){2}}
\multiput(2,0)(0,4){2}{\line(0,1){2}}
\multiput(4,0)(0,4){2}{\line(0,1){2}}
\multiput(6,0)(0,4){2}{\line(0,1){2}}
\multiput(0,0)(4,0){2}{\line(1,0){2}}
\multiput(0,2)(4,0){2}{\line(1,0){2}}
\multiput(0,4)(4,0){2}{\line(1,0){2}}
\multiput(0,6)(4,0){2}{\line(1,0){2}}
\multiput(2,2)(0,2){2}{\line(1,0){2}}
\multiput(2,2)(2,0){2}{\line(0,1){2}}
\put(2.75,2.75){$\bullet$} \put(2.75,2.1){G} 
\end{picture}
}

\put(-2,-2){
\begin{picture}(9,9)\setlength{\unitlength}{0.13cm}
\multiput(0,0)(0,37){2}{\line(1,0){37}}\multiput(0,0)(37,0){2}{\line(0,1){37}}
\multiput(0,0)(0,19){2}{\line(1,0){19}}\multiput(0,0)(19,0){2}{\line(0,1){19}}\put(8,9){19}
\multiput(19,0)(0,18){2}{\line(1,0){18}}\multiput(19,0)(18,0){2}{\line(0,1){18}}\put(26,8){18}
\multiput(19,18)(0,1){2}{\line(1,0){1}}\multiput(19,18)(1,0){2}{\line(0,1){1}}
\multiput(20,18)(0,3){2}{\line(1,0){3}}\multiput(20,18)(3,0){2}{\line(0,1){3}}\put(21,18.5){3}
\multiput(23,18)(0,8){2}{\line(1,0){8}}\multiput(23,18)(8,0){2}{\line(0,1){8}}\put(26.25,21){8}
\multiput(31,18)(0,6){2}{\line(1,0){6}}\multiput(31,18)(6,0){2}{\line(0,1){6}}\put(33.5,20){6}
\multiput(0,19)(0,18){2}{\line(1,0){18}}\multiput(0,19)(18,0){2}{\line(0,1){18}}\put(7.5,27){18}
\multiput(18,19)(0,2){2}{\line(1,0){2}}\multiput(18,19)(2,0){2}{\line(0,1){2}}
\multiput(18,21)(0,5){2}{\line(1,0){5}}\multiput(18,21)(5,0){2}{\line(0,1){5}}\put(20,23){5}
\multiput(18,26)(0,11){2}{\line(1,0){11}}\multiput(18,26)(11,0){2}{\line(0,1){11}}\put(22.5,30){11}
\multiput(31,24)(0,1){2}{\line(1,0){1}}\multiput(31,24)(1,0){2}{\line(0,1){1}}
\multiput(31,25)(0,1){2}{\line(1,0){1}}\multiput(31,25)(1,0){2}{\line(0,1){1}}
\multiput(32,24)(0,5){2}{\line(1,0){5}}\multiput(32,24)(5,0){2}{\line(0,1){5}}\put(34,25.5){5}
\multiput(29,29)(0,8){2}{\line(1,0){8}}\multiput(29,29)(8,0){2}{\line(0,1){8}}\put(32.5,32){8}
\put(30,26.5){3}
\end{picture}
}
\end{picture}
\bigskip
\caption{{\bf Left:} Example of tiling of $[0,1]$ by squares.
 {\bf Middle:} IFS $S_1=\{f_{0,0},f_{0,2},f_{2,0},f_{2,2}\}$.
 {\bf Right:} IFS $S_2=S_1\cup\{f_{1,1}\}$.}
\end{figure}

\subsection{Localized results for weak Gibbs measures}\label{appli2}
Let $\{f_1,\cdots,f_m\}$ be a homogenous self-similar IFS in
$\mathbb C$ satisfying the {\it  strong separation condition}, that
is, each function $f_j$ has the form $f_j(z)=a_jz+b_j$ where
$0<\rho=|a_j|<1,$ and there exists a topological closed  disk $D$
such that $f_j(D)\subset D$  and  the $f_j(D)$ are pairwise
disjoint. There is a natural coding map $\chi:\Sigma_m\to J$.
Moreover if we define $\psi(x)\equiv\log \rho$ for $x\in\Sigma_m$,
and $\Psi=(S_n\psi)_{n=1}^{\infty}$, then
$\chi:(\Sigma_m,d_\Psi)\to(J,|\cdot|)$ is a bi-Lipschitz
homeomorphism.

 Let $\phi: J\to \R$ be  continuous and define $\tilde \phi=\phi\circ
 \chi.$ By subtracting a constant potential if necessary,
  we can assume $P(T,\tilde\phi)=0.$ There
exists a weak Gibbs measure $\tilde\mu$ on $\Sigma_m$ (see
\cite{K}), i.e. a probability measure
 such that the exists positive sequence $(C_n)_{n\ge 1}$ such
 that for all $x\in \Sigma_m$ and $n\ge 1$
 $$
C_n^{-1} \exp(S_n\tilde \phi(x))\le \widetilde \mu([x|_{n}])\le
 C_n\exp(S_n\tilde \phi(x)),
$$
with $\lim_{n\to\infty} \log(C_n)/n=0$ (if $\phi$ is H\"older
continuous, then $C_n$ is bounded and $\widetilde \mu$ is a Gibbs
measure). In particular,
 $$d_{\tilde\mu}(x):=\lim_{r\to 0}\frac{\log \widetilde\mu(B(x,r))}{\log
 r}=\lim_{n\to\infty}\frac{S_n\tilde\phi(x)}{n\log\rho}$$
 in the sense that either both the limits do not exist, either they
 exist and are equal. Define $\mu:=\chi_\ast(\tilde\mu)$ and $\mu$ is
 called a {\it weak Gibbs measure} associated with $\phi.$
  By the bi-Lipschitz property of $\chi$ and the strong separate condition, we can
 easily conclude that
 $d_\mu(y)=\lim_{n\to\infty}S_n\phi(y)/(n\log\rho)$ for any $y\in
 J.$ Let $\Phi=(S_n\phi)_{n=1}^\infty.$ If we define
  $E_\mu(\alpha)=\{y\in J: d_\mu(y)=\alpha\}$, then
 we get $E_\Phi(\alpha)=E_\mu(\alpha/\log \rho)$ for any $\alpha\in L_\Phi.$

 By  applying Theorem
\ref{appli-fun-level-one-sided} for $d=1$, we have the following
property regarding the local property of
 weak Gibbs measure:

 \begin{corollary}\label{weakgibbs}
Let $\mu$ be the weak Gibbs measure associated with $\phi.$ Then the
set of all possible local dimension for $\mu$ is the interval $
L_\Phi/\log\rho.$ Assume $\xi:J\to\R$ is
 continuous and $\xi(J)\subset  L_\Phi/\log\rho$,
 then
 $$
\dim_H\{x\in J: d_\mu(x)=\xi(x)\}=\sup\{\dim_H
E_\mu(\alpha):\alpha\in\xi(J)\}.
 $$
 \end{corollary}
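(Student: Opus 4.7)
The plan is to derive Corollary \ref{weakgibbs} from Theorem \ref{appli-fun-level-one-sided}(4) via the affine change of variable $\alpha\mapsto\alpha\log\rho$. The key identity, already recorded in the paragraph preceding the statement, is that for every $y\in J$ the two limits $d_\mu(y)$ and $\lim_{n\to\infty}S_n\phi(y)/n$ either both fail to exist or both exist, with $d_\mu(y)=\lim_{n\to\infty}S_n\phi(y)/(n\log\rho)$; equivalently, writing $\Phi=(S_n\phi)_{n\ge 1}$,
$$
E_\mu(\beta)=E_\Phi(\beta\log\rho)\quad\text{for every }\beta\in\R.
$$

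I would first identify the range of possible local dimensions. Proposition \ref{bsic-aa}, lifted to $J$ via Theorem \ref{appli-one-sided}(1), gives $E_\Phi(\alpha)\ne\emptyset$ iff $\alpha\in L_\Phi$, so the above identity shows that $\beta$ is a local dimension of $\mu$ iff $\beta\log\rho\in L_\Phi$, i.e.\ iff $\beta\in L_\Phi/\log\rho$. Since $L_\Phi\subset\R$ is a compact interval and $\log\rho<0$, the set $L_\Phi/\log\rho$ is itself a compact interval.

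For the variational formula, set $\tilde\xi:=\xi\log\rho$. Then $\tilde\xi:J\to\R$ is continuous with $\tilde\xi(J)\subset L_\Phi$, and the identity above rewrites the level set as $\{x\in J:d_\mu(x)=\xi(x)\}=E_\Phi(\tilde\xi)$. Since the homogeneous self-similar IFS with SSC is a (trivial) $C^1$ conformal IFS satisfying the SOSC, Theorem \ref{appli-fun-level-one-sided}(4) applies with $d=1$ and yields
$$
\dim_H E_\Phi(\tilde\xi)=\sup\{\D_\Phi(\alpha):\alpha\in\tilde\xi(J)\}.
$$
Reindexing the supremum via $\alpha=\beta\log\rho$ with $\beta\in\xi(J)$, and using $\D_\Phi(\beta\log\rho)=\dim_H E_\Phi(\beta\log\rho)=\dim_H E_\mu(\beta)$, produces the stated formula.

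No substantive obstacle remains; the corollary is essentially a translation of Theorem \ref{appli-fun-level-one-sided}(4) across the bi-Lipschitz homeomorphism $\chi:(\Sigma_m,d_\Psi)\to(J,|\cdot|)$ provided by the SSC and the constancy of $\psi$. The one bookkeeping point is the cosmetic difference between $J$ and $\widetilde J=J\setminus\widetilde Z_\infty$ on which Theorem \ref{appli-fun-level-one-sided}(4) is formally stated; under the SSC one may enlarge the witnessing open set slightly so that $\widetilde Z_\infty\cap J=\emptyset$, so that $\widetilde J=J$ and no further adjustment is needed.
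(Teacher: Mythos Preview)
Your proposal is correct and follows the same route the paper indicates: the corollary is obtained by applying Theorem~\ref{appli-fun-level-one-sided}(4) with $d=1$ to the additive potential $\Phi=(S_n\phi)_{n\ge 1}$ and the rescaled function $\tilde\xi=\xi\log\rho$, using the identity $E_\mu(\beta)=E_\Phi(\beta\log\rho)$ established just before the statement. Your handling of the $\widetilde J$ versus $J$ issue is also fine; under the strong separation condition one may (as you note) choose $V$ slightly larger than the disk $D$ so that $J\subset V$ and the $f_j(\overline V)$ remain disjoint, which forces $J\cap\widetilde Z_\infty=\emptyset$ --- or, equivalently, one can bypass Theorem~\ref{appli-fun-level-one-sided} and transfer Theorem~\ref{main-fun-level-one-sided} directly via the bi-Lipschitz homeomorphism $\chi$.
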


 Now let $\omega$ stand for the harmonic measure on $J$. It is well
 known that (see for example the survey paper \cite{Ma}) there
 exists a H\"older continuous function $\phi:J\to\R$ such that $w\asymp
 \mu$, where $\mu$ is the equilibrium state of $\phi.$

 \begin{corollary}\label{harmonic}
Let $\omega$ be the harmonic measure on $J$ and $I$ is the set of
all possible local dimension for $\omega$.  Assume $\xi:J\to\R$ is
 continuous and $\xi(J)\subset I$.
 Then
 $$
\dim_H\{x\in J: d_\omega(x)=\xi(x)\}=\sup\{\dim_H
E_\omega(\alpha):\alpha\in\xi(J)\}.
 $$
 \end{corollary}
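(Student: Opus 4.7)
The plan is to reduce Corollary~\ref{harmonic} to Corollary~\ref{weakgibbs} using the comparability $\omega\asymp\mu$ recalled immediately before the statement, where $\mu$ is the equilibrium state of a H\"older continuous potential $\phi:J\to\R$ (normalized so that $P(T,\widetilde\phi)=0$). Since $\phi$ is H\"older, $\mu$ is a bona fide Gibbs measure, hence in particular a weak Gibbs measure, so Corollary~\ref{weakgibbs} is available for it.

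First I would unpack the relation $\omega\asymp\mu$: there exists $C\ge 1$ such that $C^{-1}\mu(A)\le \omega(A)\le C\mu(A)$ for every Borel set $A\subset J$. Applied to balls $B(x,r)$, this gives $\log\omega(B(x,r))=\log\mu(B(x,r))+O(1)$, so after dividing by $\log r$ and letting $r\to 0^+$, the local dimension $d_\omega(x)$ exists if and only if $d_\mu(x)$ does, and the two values coincide. From this pointwise identity I get
$$
E_\omega(\alpha)=E_\mu(\alpha)\quad(\alpha\in\R),\qquad \{x\in J:d_\omega(x)=\xi(x)\}=\{x\in J:d_\mu(x)=\xi(x)\},
$$
and also that the set $I$ of possible local dimensions of $\omega$ coincides with the analogous set for $\mu$. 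By Corollary~\ref{weakgibbs} the latter equals $L_\Phi/\log\rho$, with $\Phi=(S_n\phi)_{n\ge 1}$. Thus the hypothesis $\xi(J)\subset I$ is exactly the hypothesis $\xi(J)\subset L_\Phi/\log\rho$ required by Corollary~\ref{weakgibbs}.

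Once Corollary~\ref{weakgibbs} is in position to be applied to $\mu$ and $\xi$, I would invoke the variational formula it provides,
$$
\dim_H\{x\in J:d_\mu(x)=\xi(x)\}=\sup\{\dim_H E_\mu(\alpha):\alpha\in\xi(J)\},
$$
and translate both sides back to $\omega$ via the level-set identities above, obtaining the desired equality.

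I do not anticipate a genuine obstacle, as the argument is purely a transfer principle. The only delicate point is to ensure that the comparability $\omega\asymp\mu$ is strong enough for local dimensions, i.e.\ that it holds on balls and not merely on cylinder-type sets; this is legitimate here because under the strong separation condition the coding map $\chi:(\Sigma_m,d_\Psi)\to(J,|\cdot|)$ is bi-Lipschitz, so planar balls in $J$ correspond, up to bounded multiplicative factors in radius, to cylinders in $\Sigma_m$.
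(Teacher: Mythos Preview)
Your proposal is correct and follows exactly the route the paper intends: the paper places Corollary~\ref{harmonic} immediately after recalling that $\omega\asymp\mu$ for some Gibbs measure $\mu$ associated with a H\"older potential, leaving the (implicit) proof to be the straightforward transfer from Corollary~\ref{weakgibbs} that you have spelled out. Your remark that $\omega\asymp\mu$ on balls suffices (via the bi-Lipschitz coding under strong separation) to identify the local dimensions is the only point worth making explicit, and you have done so.
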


\noindent {\bf Final remark.} At least when $d=1$, it is not
difficult to extend the
 results obtained in
 this paper by considering $\Upsilon=(\gamma_n)_{n\ge 1}\in \C_{aa}^+(\Sigma_A,T)$
 and the more general level sets $E_{\Phi/\Upsilon}(\xi)=\{x\in \Sigma_A:
  \lim_{n\to\infty}\phi_n(x)/\gamma_n(x)=\xi(x)\}$;
  when $\xi$ is constant, such sets have been considered in the contexts
  examined in \cite{BSS,BD}.
   The formula is that if the continuous function $\xi$ takes values in the set
   $L_{\Phi/\Upsilon}=\{\Phi_*(\nu)/\Upsilon_*(\nu):\nu\in\mathcal{M}(\Sigma_A,T)\}$,
   then $\dim_H (E_{\Phi/\Upsilon}(\xi))=\sup\{-h_\nu(T)/\Psi_*(\nu):
   \nu\in \mathcal{M}(\Sigma_A,T),\ \Phi_*(\nu)/\Upsilon_*(\nu)\in \xi(\Sigma_A)\}$.
   When $\Upsilon =-\Psi$, this can be applied to the local dimension
   of Gibbs measures
    associated with H\"older potentials $\varphi$ on any $C^1$ conformal
    repeller of a map $f$,
    since in this case we know from \cite{P} that such a measure is doubling
    so that the
     local dimension is directly related to the asymptotic behavior of
      $S_n\varphi/S_n(-\log \|Df\|)$.
      Consequently, Corollary~\ref{harmonic} can be extended to harmonic
       measure on more general conformal repellers (see \cite{Ma}).

\subsection*{Additional definitions and notations}

For $\Phi\in\C_{aa}(X,T)$ (see \eqref{aa}) recall that we defined
$\Phi_{\max}:=\max(\phi_{1})+C(\Phi) $ and
$\Phi_{\min}:=\min(\phi_{1})-C(\Phi)$. Then define
$\|\Phi\|:=|\Phi_{\max}|\vee|\Phi_{\min}|.$ By the almost additivity
property we easily get
\begin{equation}\label{max-min}
n\Phi_{\min}\leq \phi_n(x)\leq n\Phi_{\max}, \ \ \forall\  n\in\N.
\end{equation}
Consequently we have $\|\phi_n\|_\infty\leq n\|\Phi\|$.

If $\Phi=(\Phi^1,\cdots,\Phi^d)\in\C_{aa}(X,T,d)$, we define
$\Phi_{\max}:=(\Phi^1_{\max},\cdots,\Phi^d_{\max})$ and
$\Phi_{\min}:=(\Phi^1_{\min},\cdots,\Phi^d_{\min})$. We also define
$\displaystyle \|\Phi\|:=\Big(\sum_{j=1}^{d}\|\Phi^j\|^2\Big)^{1/2}$
and $\|\Phi\|_{\mbox{\tiny \rm
lim}}:=\limsup_{n\to\infty}\frac{\|\phi_n\|_\infty}{n}. $ We have
 $\|\phi_n\|_\infty\leq n\|\Phi\|$.

Given  $u,v\in \R^d$, we write $[u,v]:=\{tu+(1-t)v: 0\leq t\leq 1\}$
to denote the closed interval connecting $u$ and $v$.  If $u_i\leq
v_i$ for $i=1,\cdots,d$, then we write $u\leq v.$ If
$u,v_1,v_2\in\R^d$ is such that $v_1\leq u\leq v_2$, it is easy to
prove that $|u|\leq |v_1|+|v_2|$.  We will use this basic fact
several times.

 For
$\Phi\in\C_{aa}(X,T,d)$, after defining
$C(\Phi):=(C(\Phi^1),\cdots,C(\Phi^d))$, we also have the following
vector almost additivity:
$$
-C(\Phi)+ \phi_n+\phi_p\circ T^n\leq \phi_{n+p}\leq C(\Phi)+
\phi_n+\phi_p\circ T^n,\quad \forall\  n,p\in \N.
$$

The simplest almost additive potentials are the additive ones: Given
$\phi:X\to\R^d$ continuous, define
$\phi_n=S_n\phi:=\sum_{j=0}^{n-1}\phi\circ T^j$.   If $\phi$ is
H\"{o}lder continuous, we also say that
$\Phi=(S_n\phi)_{n=1}^{\infty}$
 is {\it H\"{o}lder continuous}. The simplest
H\"{o}lder continuous potentials are the constant potentials
$(n\alpha)_{n=1}^\infty$, $\alpha\in\R^d$, that we also denote
as~$\alpha.$

\section{Proofs of Propositions~\ref{dim-formular-1}
and \ref{regularity}}\label{proofs}

\subsection{Proof of Proposition ~\ref{dim-formular-1}}
We need some facts gathered in the two following lemmas.
 We omit their simple proofs based on elementary using
 of the almost additivity of $\Phi$ and the continuity of the $\phi_n$.

\begin{lemma}\label{multiplictive}

\begin{enumerate}
\item  Given $\Phi\in \C_{aa}(\Sigma_A,T,d)$ and two constants $C_2\geq
C_1>0$, for each $n\in \N$ define
\begin{equation}\label{variation}
\|\Phi\|_{n}^\star:=\max\{\|\Phi\|_l:C_1n\leq l\leq C_2n\}.
\end{equation}
Then  $\|\Phi\|_{n}^\star/n\to 0$ when $n\to\infty.$ Especially
$\lim_{n\to\infty}\|\Phi\|_n/n=0$.

\noindent Let  $\Phi\in \C_{aa}(\Sigma_A,T)$ and  $C=C(\Phi)$.

\item  For any $u,v\in \Sigma_{A,\ast} $ such that $uv\in  \Sigma_{A,\ast}$ we have
$$ \exp(-C-\|\Phi\|_{|u|})\Phi[u]\Phi[v]\leq \Phi[uv]\leq \exp(C)\Phi[u]\Phi[v].$$

\item  For $w=u_1w_1\cdots u_nw_nu_{n+1}\in\Sigma_{A,\ast}$, let
$k=\sum_{j=1}^{n+1}|u_j|$. We have
\begin{equation}\label{222}
\exp(-2nC+k\Phi_{\min})\prod_{j=1}^{n}\Phi[w_j]\exp(-\|\Phi\|_{|w_j|})
\leq \Phi[w]\leq \exp(2nC+k\Phi_{\max})\prod_{j=1}^{n}\Phi[w_j].
\end{equation}

\item If  $\Phi\in \C_{aa}^-(\Sigma_A,T)$, then $\Phi[v]\leq\Phi[u]$ for
$u\prec v$.
\end{enumerate}
\end{lemma}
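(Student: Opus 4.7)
The plan is to handle each of the four items separately, using only the almost-additivity relation \eqref{aa}, the pointwise bounds \eqref{max-min}, the decreasing nature of $\|\phi\|_k$ in $k$, and the uniform continuity of the continuous functions $\phi_n$ on the compact space $\Sigma_A$. Only item (1) requires a nontrivial argument; the obstacle is that the naive decomposition of $\phi_n$ as $n$ copies of $\phi_1$ plus an error yields a linear error term $nC$ that cannot be overcome by the modulus of continuity of $\phi_1$ alone. The remedy is to use blocks of size $N\to\infty$, so the cumulative error becomes $O(nC/N)$, and to control each block's oscillation via the uniform continuity of the single (fixed) function $\phi_N$, letting $N\to\infty$ only after $n$.

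In detail, for item (1), I would first reduce $\|\Phi\|_n^\star/n\to 0$ to the special case $\|\Phi\|_n/n\to 0$: if $l\in[C_1n,C_2n]$ then $\|\Phi\|_l/n\le C_2(\|\Phi\|_l/l)$ and $l\to\infty$ with $n$. For the special case, fix $N$, write $n=aN+b$ with $0\le b<N$, and iterate \eqref{aa} $a$ times to obtain
\[
\phi_n=\sum_{j=0}^{a-1}\phi_N\circ T^{jN}+\phi_b\circ T^{aN}+E,\qquad |E|\le aC.
\]
For $x|_n=y|_n$, each pair $T^{jN}x,T^{jN}y$ agrees on $n-jN$ symbols, so $|\phi_N(T^{jN}x)-\phi_N(T^{jN}y)|\le\|\phi_N\|_{n-jN}$. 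Given $\epsilon>0$, uniform continuity of the \emph{single} function $\phi_N$ provides $k_N\in\N$ with $\|\phi_N\|_k\le\epsilon$ for all $k\ge k_N$. Splitting the indices between those with $n-jN\ge k_N$ (at most $a$ terms, each $\le\epsilon$) and those with $n-jN<k_N$ (at most $k_N/N+1$ terms, each bounded by $2N\|\Phi\|$ via \eqref{max-min}), and bounding the $\phi_b$ term trivially, I arrive at $\|\phi_n\|_n\le 2aC+a\epsilon+2(k_N+2N)\|\Phi\|$, hence $\limsup_n\|\Phi\|_n/n\le 2C/N+\epsilon/N$. Sending $\epsilon\to 0$ and then $N\to\infty$ completes (1).

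For item (2), the upper bound is immediate from $\phi_{|uv|}\le C+\phi_{|u|}+\phi_{|v|}\circ T^{|u|}$ after taking sup over $[uv]$; for the lower bound, pick $x\in[uv]$ nearly maximizing $\phi_{|v|}\circ T^{|u|}$ on $[uv]$, and use $\phi_{|u|}(x)\ge\log\Phi[u]-\|\Phi\|_{|u|}$ for $x\in[u]$. For item (3), iterate \eqref{aa} along $w=u_1w_1\cdots u_nw_nu_{n+1}$ ($2n$ splits, producing the error $\pm 2nC$): the contributions of the $u_j$ blocks are replaced by $|u_j|\Phi_{\max}$ and $|u_j|\Phi_{\min}$ from \eqref{max-min} (summing to $k\Phi_{\max}$ and $k\Phi_{\min}$), and each $\phi_{|w_j|}(T^{p_j}x)$ is $\le\log\Phi[w_j]$ in the upper bound (since $T^{p_j}x\in[w_j]$) and $\ge\log\Phi[w_j]-\|\Phi\|_{|w_j|}$ in the lower bound, by the same oscillation argument as in (2). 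Item (4) follows from the observation already recorded before the lemma: $\phi_{n+1}\le\phi_n+\Phi_{\max}<\phi_n$ when $\Phi\in\C_{aa}^-$, so $\phi_{|v|}\le\phi_{|u|}$ pointwise for $u\prec v$, combined with $[v]\subset[u]$.
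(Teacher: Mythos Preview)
Your proof is correct. The paper omits the proof entirely, stating only that it is ``based on elementary using of the almost additivity of $\Phi$ and the continuity of the $\phi_n$,'' and your argument uses precisely these ingredients; in particular, your block-of-size-$N$ decomposition in item~(1) is the natural way to turn continuity of a single $\phi_N$ into the $o(n)$ bound on the oscillation of $\phi_n$, and items~(2)--(4) are the expected direct manipulations of the almost-additivity inequality together with the pointwise bounds~\eqref{max-min}.
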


\begin{lemma} \label{Moran-covering}
Let $\Psi\in  \C_{aa}^-(\Sigma_A,T).$
\begin{enumerate}
\item Let $C_1(\Psi)=1/|\Psi_{\min}|$ and
$C_2(\Psi)=1+1/|\Psi_{\max}|.$ For any $w\in{\mathcal B}_n(\Psi)$ we
have
\begin{equation}\label{constant}
C_1(\Psi)n\leq|w|\leq C_2(\Psi)n.
\end{equation}

\item For any $w\in{\mathcal B}_n(\Psi)$ we have
\begin{equation}\label{444}
\exp(-C(\Psi)-\|\Psi\|_{|w|}+\Psi_{\min})e^{-n}\leq\Psi[w]\leq
e^{-n}.
\end{equation}

\item The balls in  $\{[w]: w\in{\mathcal B}_n(\Psi)\}$ are pairwise
disjoint.

\item  If $u\prec v$ are such that $u\in {\mathcal B}_{n_1}(\Psi)$ and
$v\in {\mathcal B}_{n_2}(\Psi)$, then
$$|v|-|u|\leq \frac{\Psi_{\min}-\|\Psi\|_{|v|}-(n_2-n_1)-2C(\Psi)}{\Psi_{\max}}.$$
\end{enumerate}
\end{lemma}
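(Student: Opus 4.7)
\textbf{Plan of proof for Lemma \ref{Moran-covering}.}

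The key input is the characterization from Proposition \ref{metric-sym}: $w \in \mathcal{B}_n(\Psi)$ if and only if $\Psi[w^*] > e^{-n} \geq \Psi[w]$. Combined with the pointwise bounds $n\Psi_{\min} \leq \psi_n(x) \leq n\Psi_{\max}$ (which follow by induction from the almost additivity \eqref{aa}), this reduces every assertion to elementary manipulation.

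For part (1), take $w \in \mathcal{B}_n(\Psi)$. The upper bound $\Psi[w] \leq e^{-n}$ gives $\sup_{x\in[w]}\psi_{|w|}(x) \leq -n$; combined with $\psi_{|w|}(x) \geq |w|\Psi_{\min}$ (and $\Psi_{\min}<0$), this yields $|w| \geq n/|\Psi_{\min}|= C_1(\Psi) n$. Similarly, $\Psi[w^*] > e^{-n}$ produces an $x \in [w^*]$ with $\psi_{|w^*|}(x) > -n$, and since $\psi_{|w^*|}(x) \leq |w^*|\Psi_{\max}$ with $\Psi_{\max}<0$, we get $|w^*| < n/|\Psi_{\max}|$, hence $|w| \leq (1 + 1/|\Psi_{\max}|)n = C_2(\Psi)n$ for $n \geq 1$.

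For part (2), the upper bound is the definition of $\mathcal{B}_n(\Psi)$. For the lower bound, apply Lemma \ref{multiplictive}(2) to $w = w^* \cdot w_{|w|}$ to get
\[
\Psi[w] \geq \exp(-C(\Psi) - \|\Psi\|_{|w^*|}) \Psi[w^*] \Psi[w_{|w|}],
\]
then use $\Psi[w^*] > e^{-n}$ and $\Psi[w_{|w|}] = \sup_{x\in[w_{|w|}]} e^{\psi_1(x)} \geq e^{\Psi_{\min}}$. For part (3), if $[u]$ and $[v]$ with $u,v \in \mathcal{B}_n(\Psi)$ meet, one is a prefix of the other; assuming without loss that $u \prec v$ (strictly), then $u \preceq v^*$, so by Lemma \ref{multiplictive}(4), $\Psi[v^*] \leq \Psi[u] \leq e^{-n}$, contradicting $\Psi[v^*] > e^{-n}$.

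For part (4), write $v = u v'$ with $|v'| = |v|-|u|$. On one hand, Lemma \ref{multiplictive}(2) gives
\[
\Psi[v] \leq e^{C(\Psi)} \Psi[u] \Psi[v'] \leq \exp\bigl(C(\Psi) - n_1 + |v'|\Psi_{\max}\bigr),
\]
using $\Psi[u] \leq e^{-n_1}$ and $\Psi[v'] \leq e^{|v'|\Psi_{\max}}$. On the other hand, part (2) applied to $v$ gives $\Psi[v] \geq \exp(-C(\Psi) - \|\Psi\|_{|v|} + \Psi_{\min}) e^{-n_2}$. Comparing the two inequalities and taking logarithms produces
\[
\Psi_{\min} - \|\Psi\|_{|v|} - (n_2 - n_1) - 2C(\Psi) \leq |v'|\Psi_{\max},
\]
and dividing by the negative quantity $\Psi_{\max}$ yields the claimed bound. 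The whole lemma is a bookkeeping exercise; the only mild subtlety is keeping track of the variation term $\|\Psi\|_{|v|}$ versus $\|\Psi\|_{|v^*|}$ when invoking part (2), which is harmless since these differ by indices that will be absorbed in the sublinear error $\|\Psi\|_n/n \to 0$ from Lemma \ref{multiplictive}(1) in all subsequent uses.
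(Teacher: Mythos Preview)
Your proof is correct and follows exactly the elementary route the paper hints at (the paper omits the proof, saying only that it rests on ``elementary using of the almost additivity of $\Phi$ and the continuity of the $\phi_n$''). The one wrinkle you identify is genuine: applying Lemma~\ref{multiplictive}(2) with $u=w^*$ naturally produces the variation term $\|\Psi\|_{|w^*|}=\|\Psi\|_{|w|-1}$ rather than $\|\Psi\|_{|w|}$, so the bound in \eqref{444} as literally stated requires either a harmless index convention or the observation you make---that in every application (e.g.\ via $\|\Psi\|_n^\star$ in the proof of Proposition~\ref{dim-formular-1}) both quantities are absorbed into the same $o(n)$ error.
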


Let us start the proof of Proposition \ref{dim-formular-1}. The hard
part is $(\ref{large-div}).$ At first we will show that $\log
f(\alpha,n,\epsilon)$, as a sequence of $n$, has a kind of
subadditivity property. Due to this subadditivity, by a standard
procedure, we get the desired equality of the two limits. The proof
is an adaption of that given in \cite{FFW} (see Proposition 5) and
\cite{FLW} (see Proposition 4.3). However instead of $d_1$ and an
additive potential $\phi$ considered in \cite{FFW}, here we consider
$d_\Psi$ and an almost additive potential $\Phi$, so the proof is
more involved.

\noindent $\bullet$\ {\it Subadditivity of $\log
f(\alpha,n,\epsilon)$.} More precisely we will show that for any
$\epsilon>0$, there exist an $N\in \N$ and positive sequence
$\{\beta_n\}$ with $\log\beta_n=o(n)$  such that
$$
f(\alpha,n,\epsilon)^p\leq
\beta_n^pf(\alpha,(n+\widetilde{c})p,2\epsilon)$$ for any $ n\geq
N,$ and any $ p\geq 1,$
where $\widetilde{c}=[-p_0\Psi_{\max}-2C(\Psi)]$. Recall that $p_0$
is a fixed positive integer such that $A^{p_0}>0.$

 To each $(w_1,\cdots,w_p)\in F(\alpha,n,\epsilon)^p$ we
 can associate an element of
 $F(\alpha,(n+\widetilde{c})p,2\epsilon)$ as follows. Let
$w=\overline{w}_1\cdots \overline{w}_p$, where
$\overline{w}_j=w_ju_j$ with $u_j\in \mathcal{W}$ such that
$w_ju_jw_{j+1}$ is admissible, where $\mathcal{W}$ is defined in
$(\ref{big-xi})$. Recall (see \eqref{normn}) that for any cylinder
$[u]$ and any $x,\widetilde{x}\in [u]$, we have
$|\psi_{|u|}(x)-\psi_{|u|}(\widetilde{x})|\leq \|\Psi\|_{|u|}.$
Thus for any $x\in[u],$
\begin{equation}\label{111}
\exp(\psi_{|u|}(x))\geq \Psi[u]\exp(-\|\Psi\|_{|u|}).
\end{equation}

Now for any $x\in [w]$,  let $ s_0=0, s_k=\sum_{j=1}^{k}(|w_j|+p_0)\
(1\leq k\leq p)$ and define $x^{k}=T^{s_{k-1}}x$. We have
$|w|=s_{p}$ and $x^k\in[w_k]$ for $k=1,\cdots,p.$
Then, by using the almost additivity of $\Psi$, $(\ref{111})$ and
Lemma \ref{Moran-covering}(2)
 we can get
\begin{multline*}
\Psi[w]\geq \exp(\psi_{|w|}(x))
\geq\exp\Big(\sum_{k=1}^p \psi_{|w_k|}(x^k)+p_0p\Psi_{\min}-(2p-1)C(\Psi)\Big)\\
\geq (\prod_{k=1}^{p}\Psi[w_k])\exp\Big (-\sum_{k=1}^p
\|\Psi\|_{|w_k|}+p(p_0\Psi_{\min}-2C(\Psi))\Big )\ge
\exp(-p(n+c_1(n))),
\end{multline*}
where  $c_1(n)=-(p_0+1)\Psi_{\min}+3C(\Psi)+2\|\Psi\|_{n}^\star>0$
and $\|\Psi\|_{n}^\star$ is defined as in $(\ref{variation})$ with
the constants $C_2(\Psi)\geq C_1(\Psi)>0$. Lemma \ref{multiplictive}
(1) yields $c_1(n)/n\to 0.$ By Lemma \ref{multiplictive}(3) and the
definition of $\tilde c$ we also have
\begin{eqnarray*}
\Psi[w]&\leq&
\exp(2pC(\Psi)+p_0p\Psi_{\max})(\prod_{k=1}^{p}\Psi[w_k]) \\
&\leq& \exp(-p(n-p_0\Psi_{\max}-2C(\Psi))\leq \exp(-p(n+\tilde c)).
\end{eqnarray*}
Thus there exists $u\in {\mathcal B}_{p(n+\widetilde{c})}(\Psi)$
such that $u\prec w$. Write ${w}=uw^\prime$.

\noindent {\bf Claim:}\ \ $|w^\prime|\leq p(ac_1(n)+b)$ for some
constant $a,b>0.$

Indeed, we have
$$
e^{-p(n+c_1(n))}\leq\Psi[{w}]\leq
e^{C(\Psi)}\Psi[u]\Psi[w^\prime]\leq
 e^{C(\Psi)}e^{-p(n+\tilde c)} e^{|w^\prime|\Psi_{\max}}.
 $$
Thus $|w^\prime|\leq p (c_1(n)+C(\Psi)-\tilde c)/(-\Psi_{\max})\leq
p (c_1(n)+3C(\Psi)+1)/(-\Psi_{\max}).$

Now since $w_k\in F(\alpha,n,\epsilon)$ we can find
$x_{k}\in[w_{k}]$ such that
$|\frac{\phi_{|w_k|}(x_k)}{|w_k|}-\alpha|<\epsilon.$ Take $x\in
[w]$; in particular, $x\in[u]$. Define $s_k$ and $ x^{k}$ as above.
We have $|{w}|=s_{p}$ and $x^k\in[w_k]$ for $k=1,\cdots,p.$ By
almost additivity, we get
$$
\phi_{|u|}(x)+\phi_{|w^\prime|}(T^{|u|}x)-C(\Phi)\leq\phi_{|w|}(x)\leq
\phi_{|u|}(x)+\phi_{|w^\prime|}(T^{|u|}x)+C(\Phi)
$$
(this is a vector inequality). Recall  that if $\beta,
\beta_1,\beta_2\in \R^d$ are such that $\beta_1\leq \beta\leq
\beta_2$, then $|\beta|\leq |\beta_1|+|\beta_2|.$ Then we conclude
that
\begin{eqnarray*}
\Big|\phi_{|w|}(x)-\phi_{|u|}(x)\Big|&\leq&
\Big|\phi_{|w^\prime|}(T^{|u|}x)-C(\Phi)\Big|+
\Big|\phi_{|w^\prime|}(T^{|u|}x)+C(\Phi)\Big|\\
&\leq& 2(|w^\prime|\|\Phi\|+|C(\Phi)|).
\end{eqnarray*}
In other word, $\phi_{|u|}(x)=\phi_{|w|}(x)+\eta_0$ with
$|\eta_0|\leq 2|w^\prime|\|\Phi\|+2|C(\Phi)|.$ In a similar fashion
we have
\begin{eqnarray*}
\phi_{|u|}(x)&=&\phi_{|w|}(x)+\eta_0=\sum_{k=1}^{p}\phi_{|w_ku_k|}(x^k)
+\eta_1+\eta_0\\
&=&\sum_{k=1}^{p}\phi_{|w_k|}(x^k)+\eta_2+\eta_1+\eta_0
=\sum_{k=1}^p\phi_{|w_k|}(x_k)+\eta_3+\eta_2+\eta_1+\eta_0\\
&=&(\sum_{k=1}^p|w_k|) \alpha+\eta_4+\eta_3+\eta_2+\eta_1+\eta_0,
\end{eqnarray*}
where
$$
\begin{cases}
|\eta_0|\leq 2|w^\prime|\|\Phi\|+2|C(\Phi)|\leq
2p(ac_1(n)+b)\|\Phi\|+2|C(\Phi)|;\\
|\eta_1|\leq 2(p-1)|C(\Phi)|;\ \ \ \ \ \ \ \ \ \ \ \
|\eta_2|\leq 2p(p_0\|\Phi\|+|C(\Phi)|);\\
|\eta_3|\leq \sum_{k=1}^{p}\|\Phi\|_{|w_k|}\leq
p\|\Phi\|_{n}^\star;\ \ \ \ \ \ \ \ \ \  |\eta_4|\leq
(\sum_{k=1}^p|w_k|)\epsilon.
\end{cases}
$$
Since $s_{p}=\sum_{k=1}^p|w_k|+p_0p$ and $|w_k|\geq C_1 n$, we have
$s_p\geq C_1np$ and
\begin{eqnarray*}
\Big |\frac{\phi_{|u|}(x)}{|u|}-\alpha\Big |
\leq\frac{|((\sum_{k=1}^p|w_k|)-|u|)\alpha|+|\eta_4|+|\eta_3|+
|\eta_2|+|\eta_1|+|\eta_0|}{|u|}.
\end{eqnarray*}
Moreover, $|u|=s_p-|w^\prime|\geq pC_1n-p(ac_1(n)+b)$  and
$\max(c_1(n)/n, \|\Phi\|_{n}^\star/n) \to 0$,  so we can choose
$N(\epsilon)$ big enough such that $|\phi_{|u|}(x)/|u|-\alpha|\leq
2\epsilon$ when $n\geq N(\epsilon)$.
 Consequently $u\in
F(\alpha,p(n+\widetilde{c}),2\epsilon).$ From this we conclude that
$$
f(\alpha,p(n+\widetilde{c}),2\epsilon)\geq
f(\alpha,n,\epsilon)^p/m^{p(ac_1(n)+b)}.
$$
 We get the desired
subadditivity by taking $\beta_n=m^{ac_1(n)+b}$.

\noindent $\bullet$\ {\it Coincidence of two limits.} Next we show
that
$$\lim_{\epsilon\to 0}\liminf_{n\to\infty}\frac{\log
f(\alpha,n,\epsilon)}{n}= \lim_{\epsilon\to
0}\limsup_{n\to\infty}\frac{\log f(\alpha,n,\epsilon)}{n}.
$$
Note that these limits exist since $f(\alpha,n,\epsilon)$ is a
non-increasing function in  the variable $\epsilon.$ Denote by
$\theta$ the left-hand side limit. Then for any $\delta>0$, there
exists $\epsilon_0>0$ such that
$$
\liminf_{n\to\infty}\log f(\alpha,n,\epsilon_0)/n<\theta+\delta.
$$
 Fix
$\delta>0$ and $\epsilon_0>0$ as above. To show the equality we only
need to show that
$$
\limsup_{n\to\infty}{\log f(\alpha,n,\epsilon_0/4)}/{n}\leq
\theta+\delta.
$$
 Take a sequence of integers $n_k\nearrow\infty$ such that
$ f(\alpha,n_k,\epsilon_0)<e^{n_k(\theta+\delta)}$ for any $ k\in
\N. $ Fix $n\geq N(\epsilon_0/4).$ For each $k,$ write
$n_k=(n+\widetilde{c})p_k-l_k$ with $0\leq l_k<n+\widetilde{c}.$ By
the subadditivity property, we have
\begin{equation}\label{I}
f(\alpha,n,\epsilon_0/4)^{p_k}\leq
\beta_n^{p_k}f(\alpha,(n+\widetilde{c})p_k,\epsilon_0/2).
\end{equation}

Next we show that there exists a positive integer sequence
$\{\gamma_k\}$ with $\gamma_k=o(p_k) $ such that
\begin{equation}\label{II}
f(\alpha,(n+\widetilde{c})p_k,\epsilon_0/2)\leq m^{\gamma_k}
f(\alpha,n_k,\epsilon_0).
\end{equation}

 Assume $w=w_1w_2$ is such that $w_1\in
{\mathcal B}_t(\Psi)$ and $w\in {\mathcal B}_{t+s}(\Psi)$ with
$1\leq s\leq (n+\widetilde{c})$, then by Lemma
\ref{Moran-covering}(4) we have
\begin{equation}\label{7}
|w_2|\leq
\frac{\Psi_{\min}-\|\Psi\|_{|w|}-(n+\widetilde{c})-2C(\Psi)}{\Psi_{\max}}.
\end{equation}
Thus $|w_2|/|w|\to 0$ when $|w|\to\infty.$  Choose $t_0$ large
enough so that when $t\geq t_0$ and $w_1\in {\mathcal B}_t(\Psi)$,
$w\in {\mathcal B}_{t+s}(\Psi)$ we have
\begin{equation}\label{6}
\frac{|w|}{|w_1|}\leq \frac{3}{2},\ \ \frac{|C(\Phi)|}{|w_1|}\leq
\frac{\epsilon_0}{16}\ \ \text{and }\ \ \frac{|w_2|}{|w_1|}\leq
\frac{\epsilon_0}{8(2\|\Phi\|+|\alpha|)}.
\end{equation}

Let $k_0$  such that $(n+\widetilde{c})p_{k_0}\geq
t_0+(n+\widetilde{c})$. Let $k\geq k_0.$  Fix $w\in
F(\alpha,(n+\widetilde{c})p_k,\epsilon_0/2)$. There exists $x\in
[w]$ such that $\big|\phi_{|w|}(x)-|w|\alpha\big|\leq
|w|\epsilon_0/2.$ Note that $(n+\widetilde c)p_k\geq n_k$. Let
$w_1\prec w$ such that $\Psi[w_1]\leq e^{-n_k} $ and
$\Psi[w_1^\ast]>e^{-n_k}$( recall that $w_1^\ast$ is obtained by
deleting the last letter of $w_1$). Thus $[w_1]\in {\mathcal
B}_{n_k}(\Psi)$. Write $w=w_1w_2.$ By $(\ref{6})$
 we have
\begin{eqnarray*}
\big|\phi_{|w_1|}(x)-|w_1|\alpha\big|&\leq&
\big|\phi_{|w_1|}(x)-\phi_{|w|}(x)\big|
+\big|\phi_{|w|}(x)-|w|\alpha\big|+|w_2||\alpha|\\
&\leq&2|w_2|\|\Phi\|+2|C(\Phi)|+\big|\phi_{|w|}(x)-|w|
\alpha\big|+|w_2||\alpha|\le |w_1|\epsilon_0,
\end{eqnarray*}
which means that $w_1\in F(\alpha,n_k,\epsilon_0)$. Write
$q_k=[C_2(\Psi)(n+\tilde c)p_k],$ then $|w|\leq q_k.$ Define
$$
\gamma_k:=\frac{\Psi_{\min}-\|\Psi\|_{q_k}
-(n+\widetilde{c})-2C(\Psi)}{\Psi_{\max}}.
$$
It is clear that $\gamma_k=o(p_k)$. Moreover by $(\ref{7})$,
$|w_2|\leq \gamma_k$. From this we conclude $(\ref{II})$.

Combine $(\ref{I})$ and $(\ref{II})$ we get
$$f(\alpha,n,\epsilon_0/4)\leq \beta_n m^{\gamma_k/p_k}
f(\alpha,n_k,\epsilon_0)^{1/p_k}\leq \beta_n m^{\gamma_k/p_k}
e^{n_k(\theta+\delta)/p_k}.$$
Letting $k\to\infty$ we get $f(\alpha,n,\epsilon_0/4)\leq \beta_n
e^{(n+\widetilde{c})(\theta+\delta)}$. Then, letting  $n\to\infty$
we have
$\limsup_{n\to\infty}{\log (f(\alpha,n,\epsilon_0/4))}/{n}\leq
\theta+\delta$.

\noindent $\bullet$ {\it Upper semi-continuity of
$\Lambda(\alpha)$.} Let $\alpha\in L_\Phi$. For any $\eta>0$ there
is $\epsilon >0$ such that
$\liminf_{n\to\infty}\log f(\alpha,n,\epsilon)/n<
\Lambda(\alpha)+\eta.$
Let $\beta\in L_\Phi$ with $|\beta-\alpha|<\epsilon/3.$ Given $w\in
F(\beta,n,\epsilon/3)$, there exists $x\in[w]$ such that
$|\phi_{|w|}(x)/|w|-\beta|\leq \epsilon/3.$ Hence
$|\phi_{|w|}(x)/|w|-\alpha|\leq
|\phi_{|w|}(x)/|w|-\beta|+|\beta-\alpha|<\epsilon,$ which means
$w\in  F(\alpha,n,\epsilon)$. This proves that $
F(\beta,n,\epsilon/3)\subset F(\alpha,n,\epsilon)$. It follows that
$f(\beta,n,\epsilon/3)\leq f(\alpha,n,\epsilon)$, therefore
$$
\Lambda(\beta)\leq
\liminf_{n\to\infty}\frac{f(\beta,n,\epsilon/3)}{n}\leq
\liminf_{n\to\infty}\frac{f(\alpha,n,\epsilon)}{n}<
\Lambda(\alpha)+\eta.
$$
This establishes the upper semi-continuity of $\Lambda$ at $\alpha.$

\noindent $\bullet$ {\it Results about $D(\Psi)$.} By essentially
repeating the same proof as above (in fact it is much easier), we
can show
$$\liminf_{n\to\infty}\frac{\log \#{\mathcal B}_n(\Psi)}{n}=
 \limsup_{n\to\infty}\frac{\log \#{\mathcal B}_n(\Psi)}{n}.$$
 We denote the  limit by $D(\Psi)$. By
 $(\ref{constant})$, for any $w\in{\mathcal B}_n(\Psi)$ we have
  $ |w|\leq Cn $, where $C=1+1/|\Psi_{\max}|.$ This yields
 $ \#{\mathcal B}_n(\Psi)\leq \#\Sigma_{A,[Cn]}$ and
 consequently $D(\Psi)\leq C\log m.$ \hfill$\Box$

Now we come to the weak concavity of the function $\Lambda$ on
$L_\Phi.$

\subsection{Proof of Proposition \ref{regularity}}

Let $A\subset\R^d$. We say that $x\in A$ is a {\it local cone
point}, or an {\it $\epsilon$-cone point } of $A$, if there exists
$\epsilon>0$ such that for any $y\in A\cap B(x,\epsilon)$ and $y\ne
x$, the interval $[x,y_\epsilon]\subset A,$ where
$y_\epsilon:=x+\epsilon(y-x)/|y-x|$.
\begin{lemma} \label{distortion-concave}
Let $A\subset \R^d$ be a  convex set and  $h: A\to \R$ be a bounded
weakly concave function. Then $h$ is lower semi-continuous at each
local cone point of $A$. Especially  $h$ is lower semi-continuous on
$\mathrm{ri}(A)$ and on any closed interval $I\subset A$. It is
lower semi-continuous on $A$ if $A\subset \R^d$ is a convex closed
polyhedron.
\end{lemma}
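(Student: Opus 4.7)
The plan is to first establish lower semi-continuity at an arbitrary local cone point of $A$, and then to verify that in each of the three advertised settings every relevant point is a cone point.

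For the core step, I fix an $\epsilon$-cone point $x\in A$ and a sequence $y_n\in A\setminus\{x\}$ with $y_n\to x$, so that eventually $y_n\in B(x,\epsilon)$. The key idea is to apply the weak concavity inequality \eqref{lower-semi-conti} to the pair $\alpha=x$, $\beta=z_n:=(y_n)_\epsilon$: the cone property guarantees $[x,z_n]\subset A$, and writing $t_n:=|y_n-x|/\epsilon$ we have $y_n=(1-t_n)x+t_n z_n$ with $t_n\to 0$. I would then solve for $\lambda_n\in(0,1)$ making the reweighted barycenter on the right-hand side of \eqref{lower-semi-conti} equal to $y_n$, which yields
\[
1-\lambda_n \;=\; \frac{t_n\,\gamma_1^{(n)}}{(1-t_n)\,\gamma_2^{(n)}+t_n\,\gamma_1^{(n)}} \;\le\; \frac{c^2\,t_n}{1-t_n},
\]
using $\gamma_j^{(n)}\in[c^{-1},c]$; in particular $\lambda_n\to 1$ as $n\to\infty$. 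The weak concavity inequality then reads $h(y_n)\ge \lambda_n h(x)+(1-\lambda_n)h(z_n)$, and since $h$ is bounded the second term vanishes in the limit, giving $\liminf_n h(y_n)\ge h(x)$.

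It remains to certify the cone-point property in the three settings. For $x\in\mathrm{ri}(A)$, the relative-interior slice $(x+\epsilon B)\cap\mathrm{aff}(A)\subset A$ supplies exactly what is needed, since $y_\epsilon\in\mathrm{aff}(A)$ whenever $y$ does (replacing $\epsilon$ by a slightly smaller radius to handle the sphere of radius $\epsilon$ if necessary). For a closed interval $I\subset A$, I would first observe that the reweighted barycenter in \eqref{lower-semi-conti} is still an ordinary convex combination of $\alpha$ and $\beta$ (the two coefficients are nonnegative and sum to $1$), hence stays in $I$ when $\alpha,\beta\in I$; thus $h|_I$ is a bounded weakly concave function on the one-dimensional convex set $I$, where every point is trivially a local cone point. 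For a convex closed polyhedron, a finite-half-space representation of $A$ shows that locally $A$ coincides with $x+K_x$, where $K_x$ is the polyhedral tangent cone at $x$, which directly supplies the cone condition for all small enough $\epsilon$. The only delicate point is the uniformity of $\lambda_n\to 1$ against the $n$-dependent weights $\gamma_j^{(n)}$, and this is precisely what the uniform bound $[c^{-1},c]$ built into the definition of weak concavity was designed to guarantee.
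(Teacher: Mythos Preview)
Your proof is correct and follows essentially the same approach as the paper: push the nearby point $y_n$ out to the $\epsilon$-sphere to get $z_n$, apply the weak concavity inequality \eqref{lower-semi-conti} to the pair $(x,z_n)$, and use the uniform bound $\gamma_j\in[c^{-1},c]$ together with boundedness of $h$ to force $h(y_n)\ge \lambda_n h(x)+(1-\lambda_n)h(z_n)\to h(x)$. The only cosmetic differences are that the paper argues by contradiction and swaps the roles of $\alpha$ and $\beta$; your direct argument and explicit verification that $h|_I$ inherits weak concavity are, if anything, slightly cleaner.
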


\begin{proof}  Let $\beta\in A$ be a $\epsilon$-cone point of $A$ for
some $\epsilon>0$. Suppose that $h$ is not lower semi-continuous at
$\beta$. Thus we can find $\eta>0$ and $\alpha_n\in A\cap
B(\beta,\epsilon)$ such that $\alpha_n\to\beta$ and $h(\alpha_n)\leq
h(\beta)-\eta.$ Define
$\alpha_n^\prime=\beta+\epsilon(\alpha_n-\beta)/|\alpha_n-\beta|$,
then $\alpha_n^\prime\in A$ since $\beta$ is a $\epsilon$-cone
point. Since $\alpha_n$ is in the open interval
$(\alpha_n^\prime,\beta)$, there exists a unique  $\lambda_n\in
(0,1)$ such that
$$
\alpha_n=\frac{\lambda_n\gamma_{1}(\alpha_n^\prime,\beta)
\alpha_n^\prime+(1-\lambda_n)\gamma_{2}(\alpha_n^\prime,\beta)\beta}
{\lambda_n\gamma_1(\alpha_n^\prime,\beta)+(1-\lambda_n)
\gamma_2(\alpha_n^\prime,\beta)},
$$
where $\gamma_1,\gamma_2$ is from the definition of weak concavity.
Since $\gamma_1,\gamma_2\in[c^{-1},c]$ and $\alpha_n\to \beta$ we
conclude that $\lambda_n\to 0.$ Since $h$ is bounded,  by
$(\ref{lower-semi-conti})$ we get $ h(\alpha_n)\geq
\lambda_nh(\alpha_n^\prime)+(1-\lambda_n)h(\beta)\to h(\beta)
 \quad ({\rm as}\ n\to\infty),
$ which is in contradiction with the choice of $\alpha_n.$ So $h$ is
lower semi-continuous at $\beta.$

Since each $x\in E$ is a local cone point of $E$ when  $E$ is ${\rm
ri}(A)$, or $E$ is a closed interval in $A$, or $E$ is A itself  and
$A$ is a convex closed polyhedron, the other results follow.
\end{proof}

Now we prove Proposition \ref{regularity}. The new point in this
proposition is the weak concavity. In fact when $d_\Psi=d_1$, as
shown in \cite{FFW}, the function $\Lambda$ is indeed concave. When
the more general metric $d_\Psi$ is considered, the length of $w\in
{\mathcal B}_n(\Psi)$ has fluctuations  (see Lemma
\ref{Moran-covering} (1)), which destroy the concavity of $\Lambda$.
However,  these  fluctuations are controllable, so that a careful
analysis yields the weak concavity of $\Lambda$.

At first we show that $\Lambda$ is bounded and
 positive. Fix $\alpha\in L_\Phi.$
  By definition $f(\alpha,n,\epsilon)\leq
   \#{\mathcal B}_n(\Psi),$ consequently $\Lambda(\alpha)\leq
  D(\Psi)$. On the other hand since $\alpha\in L_\Phi$, for any
  $\epsilon>0$, when $n$ large enough, $F(\alpha,n,\epsilon)\ne
  \emptyset.$ Consequently $\Lambda(\alpha)\geq 0.$
Thus  $\Lambda(L_\Phi)\subset [0,D(\Psi)]$.

Next we show that $\Lambda$ is weakly concave. Let $\alpha,\beta\in
L_\Phi$. For any $w_1,\cdots, w_p\in F(\alpha,n,\epsilon)$ and any
$w_{p+1}, \cdots, w_{p+q}\in F(\beta,n,\epsilon)$, let
$w=\overline{w}_1\cdots\overline{w}_{p+q}$ where
$\overline{w}_j=w_ju_j$ with $u_j\in \mathcal{W}$ such that $w$ is
admissible. By the same argument as for Proposition
\ref{dim-formular-1}, we can show that
$ \exp(-(p+q)(n+c_1(n)))\leq \Psi[w]\leq
\exp(-(p+q)(n+\widetilde{c})) $ with the same $c_1(n)$ and
$\widetilde{c}$ as in Proposition \ref{dim-formular-1}, which means
that there exists $u\prec w$ such that $u\in {\mathcal
B}_{(p+q)(n+\widetilde{c})}(\Psi)$. Write ${w}=uw^\prime$. We also
have $|w^\prime|\leq (p+q)(ac_1(n)+b)$ with the same $(a,b)$ as in
that proposition.

For any $k\in\N$ define $F_k(\alpha,n,\epsilon):=\Sigma_{A,k}\cap
F(\alpha,n,\epsilon)$. By Lemma \ref{Moran-covering} (1), we have
$\displaystyle F(\alpha,n,\epsilon) =\bigcup_{C_1n\leq k\leq
C_2n}F_k(\alpha,n,\epsilon),$
where $C_i=C_i(\Psi)$ for $i=1,2.$ Define
$f_k(\alpha,n,\epsilon)=\#F_k(\alpha,n,\epsilon).$ Choose $k_0$ such
that
$\displaystyle f_{k_0}(\alpha,n,\epsilon)=\max_{C_1n\leq k\leq
C_2n}f_k(\alpha,n,\epsilon).$
Then $f_{k_0}(\alpha,n,\epsilon)\geq
f(\alpha,n,\epsilon)/(C_2-C_1)n.$ Write $k_0=\gamma_n(\alpha)n,$
thus $\gamma_n(\alpha)\in [C_1,C_2]$. Likewise we can find
$\gamma_n(\beta)\in[C_1,C_2]$ such that
$f_{\gamma_n(\beta)n}(\beta,n,\epsilon)\geq
f(\beta,n,\epsilon)/(C_2-C_1)n.$

Fix a subsequence $n_k\uparrow \infty$ such that
$\gamma_{n_k}(\alpha)\to \gamma(\alpha)$ and $\gamma_{n_k}(\beta)\to
\gamma(\beta)$ as $k\to\infty$.  Take $w_1,\cdots,w_p\in
F_{\gamma_{n_k}(\alpha)n_k}(\alpha,n_k,\epsilon)$ and
$w_{p+1},\cdots,w_{p+q}\in
F_{\gamma_{n_k}(\beta)n_k}(\beta,n_k,\epsilon)$. Choose $x_j\in
[w_j]$
 such that
$$\begin{cases} |\phi_{|w_j|}(x_j)-|w_j|\alpha|\leq |w_j|\epsilon, & \text{ if }
1\leq j\leq p\\
|\phi_{|w_j|}(x_j)-|w_j|\beta|\leq |w_j|\epsilon, & \text{ if }
p+1\leq j\leq p+q .\end{cases}$$

  Let
$w=\overline{w}_1\cdots\overline{w}_{p+q}$ and write $w=uw^\prime$
such that $u\in {\mathcal B}_{(p+q)(n_k+\widetilde{c})}(\Psi)$. Then
we know that
$|w|=p(\gamma_{n_k}(\alpha)n_k+p_0)+q(\gamma_{n_k}(\beta)n_k+p_0)$
and $|u|=|w|-|w^\prime|.$ Now for any $x\in [w]$, define $x^1=x$ and
$x^j=T^{\sum_{l=1}^{j-1}|w_l|+p_0}x$ for $j\geq 2.$ Then we have
\begin{eqnarray*}
\phi_{|u|}(x) &=&\phi_{|w|}(x)+\eta_0
=\sum_{j=1}^{p+q}\phi_{|w_j|}(x^j)+\eta_1+\eta_0
=\sum_{j=1}^{p+q}\phi_{|w_j|}(x_j)+\eta_2+\eta_1+\eta_0\\
&=&p\gamma_{n_k}(\alpha)n_k\alpha+q\gamma_{n_k}(\beta)n_k\beta+
\eta_3+\eta_2+\eta_1+\eta_0\\
&=&p\gamma(\alpha)n_k\alpha+q\gamma(\beta)n_k\beta+\eta_4+\eta_3+\eta_2+\eta_1+\eta_0,
\end{eqnarray*}
where
$$
\begin{cases}
|\eta_0|\leq 2|w^\prime|\|\Phi\|+2|C(\Phi)|\leq
2(p+q)(ac_1(n_k)+b)\|\Phi\|+2|C(\Phi)|;\\
|\eta_1|\leq 2(p+q)(p_0\|\Phi\|+2C(\Phi));\ \ \ \ \
|\eta_2|\leq (p+q)\|\Phi\|_{n_k}^\star;\\
|\eta_3|\leq
n_k(p\gamma_{n_k}(\alpha)+q\gamma_{n_k}(\beta))\epsilon;\\
|\eta_4|\leq
pn_k|\alpha||\gamma_{n_k}(\alpha)-\gamma(\alpha)|+qn_k|\beta||\gamma_{n_k}
(\beta)-\gamma(\beta)|.
\end{cases}
$$
This yields that for $k$ large enough, $u\in
{F}((p\gamma(\alpha)\alpha+q\gamma(\beta)\beta)/(p\gamma(\alpha)+q\gamma(\beta)),
(n_k+\widetilde{c})(p+q),2\epsilon). $ Thus we conclude that
\begin{eqnarray*}
&&f\Big
(\frac{p\gamma(\alpha)\alpha+q\gamma(\beta)\beta}{p\gamma(\alpha)
+q\gamma(\beta)},(n_k+\widetilde{c})(p+q),2\epsilon\Big )\\
&\geq& [f_{\gamma_{n_k}(\alpha)n_k}(\alpha,n_k,\epsilon)]^p
[f_{\gamma_{n_k}(\beta)n_k}(\beta,n_k,\epsilon)]^qm^{-(p+q)(ac_1(n_k)+b)}\\
&\geq&
f(\alpha,n_k,\epsilon)^pf(\beta,n_k,\epsilon)^q
[(C_2-C_1)n_k]^{-p-q}m^{-(p+q)(ac_1(n_k)+b)}.
\end{eqnarray*}
Combining this with Proposition \ref{dim-formular-1} we get
$$\lambda\Lambda(\alpha)+(1-\lambda)\Lambda(\beta)\leq
\Lambda\left
(\frac{\lambda\gamma(\alpha)\alpha+(1-\lambda)\gamma(\beta)\beta}
{\lambda\gamma(\alpha)+(1-\lambda)\gamma(\beta)}\right)$$
for any $\lambda=\frac{p}{p+q}\in[0,1]\cap\Q$. Since $\Lambda$ is
upper semi-continuous, we conclude that this formula holds for any
$\lambda\in [0,1]$. Thus $\Lambda$ is weakly concave.

Assume $A\subset L_\Phi$ is a convex set, and $I\subset L_\Phi$ is a
closed interval. By Lemma \ref{distortion-concave}, $\Lambda$ is
lower semi-continuous on $\mathrm{ri}(A)$ and
  $I$.
 Combining this  with the upper semi-continuity yields the continuity
 on $\mathrm{ri}(A)$  and
 $I$. Taking $A=L_\Phi$ we get the continuity on ${\rm ri}(L_\Phi)$.

 Now assume $L_\Phi$ is a polyhedron. By Lemma
 \ref{distortion-concave}, $\Lambda$ is lower
 semi-continuous on $L_\Phi$. This,  together with the upper
 semi-continuity yields the continuity on $L_\Phi.$

 Let $I=[\alpha_1,\alpha_2]\subset L_\Phi$ and $\alpha_{\max}\in
 I$ as defined in the proposition. Assume  $\Lambda$ is
 not decreasing from $\alpha_{\max}$ to
 $\alpha_1$. Since $\Lambda$ is continuous on $I$,
 we can find $\beta_1,\beta_2, \beta_3\in [\alpha_1,\alpha_{\max}]$ such that
 $\beta_2\in [\beta_1,\beta_3]$ and $
 \Lambda(\beta_1)=\Lambda(\beta_3)>\Lambda(\beta_2),
 $
which is in contradiction with the fact that $\Lambda$ is
quasi-concave, since it is weakly concave. Thus
 $\Lambda$ is decreasing from $\alpha_{\max}$ to
 $\alpha_1$. The same argument  shows that $\Lambda$ is
 decreasing from $\alpha_{\max}$ to
 $\alpha_2$.
\hfill$\Box$

\section{Proof of  Theorem~ \ref{main-one-sided}(1)}\label{proof of theo}

By Proposition \ref{bsic-aa}, we have $E_\Phi(\alpha)\ne\emptyset$
if and only if $\alpha\in L_\Phi.$ For the next statement, our plan
is the following: we show that $\D(\alpha)\leq \Lambda(\alpha)\leq
{\mathcal E}(\alpha)\leq \D(\alpha)$. We divide this into three
steps corresponding to the next Sections~\ref{upperbd}, \ref{5.2}
and \ref{5.3}.

\subsection{
$\D(\alpha)\leq \Lambda(\alpha)$}\label{upperbd} We prove a slightly
more general result for the upper bound.  Given
$\Phi\in\C_{aa}(\Sigma_A,T,d)$ and $\Omega\subset L_\Phi$, define
$\displaystyle E_\Phi(\Omega):=\bigcup_{\alpha\in
\Omega}E_\Phi(\alpha). $

\begin{proposition}\label{upper-bound} For any compact set $\Omega\subset L_\Phi$
 we have $\dim_P E_\Phi(\Omega)\leq \sup\{\Lambda(\alpha):
\alpha\in\Omega\}$. In particular, if $\alpha\in L_\Phi$ we have
$\D(\alpha)\leq \dim_P E_\Phi(\alpha)\leq \Lambda(\alpha)$.
\end{proposition}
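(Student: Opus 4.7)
The plan is to cover $E_\Phi(\Omega)$ efficiently by the natural balls $[w]$ with $w\in\mathcal{B}_n(\Psi)$ (all of radius $e^{-n}$), count them via the function $f(\alpha,n,\epsilon,\Phi,\Psi)$, and then pass from upper box-counting estimates to a packing dimension estimate through a countable decomposition of $E_\Phi(\Omega)$. The second statement follows by taking $\Omega=\{\alpha\}$ and using $\dim_H\le\dim_P$.

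Fix $s>\sup\{\Lambda(\alpha):\alpha\in\Omega\}$. Using the upper semi-continuity of $\Lambda$ on the compact $\Omega$ (Proposition~\ref{dim-formular-1}) together with the definition of $\Lambda(\alpha)$ as a limit in $\epsilon$, I would first attach to each $\alpha\in\Omega$ an $\epsilon(\alpha)>0$ with
$\limsup_{n\to\infty} n^{-1}\log f(\alpha,n,2\epsilon(\alpha))<s$, then extract a finite subcover $\{B(\alpha_j,\epsilon_j)\}_{j=1}^M$ of $\Omega$ from $\{B(\alpha,\epsilon(\alpha))\}_{\alpha\in\Omega}$, writing $\epsilon_j=\epsilon(\alpha_j)$. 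This reduces the compact-$\Omega$ statement to finitely many ``one-point'' estimates, which is the core use of the compactness hypothesis.

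Decompose $E_\Phi(\Omega)=\bigcup_{j,N} E_{j,N}$ where
\[
E_{j,N}:=\bigl\{x\in E_\Phi(\Omega):\alpha_x\in B(\alpha_j,\epsilon_j)\cap\Omega,\ |\phi_k(x)/k-\alpha_x|<\epsilon_j\ \forall k\ge N\bigr\},
\]
$\alpha_x$ denoting the limit of $\phi_n(x)/n$. For each $j,N$ fixed and $n$ large enough that $C_1(\Psi)\,n\ge N$, any $w\in\mathcal{B}_n(\Psi)$ with $[w]\cap E_{j,N}\ne\emptyset$ satisfies $|w|\ge N$ by Lemma~\ref{Moran-covering}(1); picking $x\in[w]\cap E_{j,N}$ I get $|\phi_{|w|}(x)/|w|-\alpha_j|<2\epsilon_j$ by the triangle inequality, so $w\in F(\alpha_j,n,2\epsilon_j,\Phi,\Psi)$. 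Hence the number of balls of $\mathcal{B}_n(\Psi)$ meeting $E_{j,N}$ is at most $f(\alpha_j,n,2\epsilon_j)$, which (since these balls all have radius $e^{-n}$ and cover $\Sigma_A$) yields
\[
\overline{\dim}_B E_{j,N}\le \limsup_{n\to\infty}\frac{\log f(\alpha_j,n,2\epsilon_j)}{n}<s.
\]

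The proof finishes via the standard characterization $\dim_P E=\inf\{\sup_i \overline{\dim}_B E_i:E\subset\bigcup_i E_i\}$, which gives $\dim_P E_\Phi(\Omega)\le\sup_{j,N}\overline{\dim}_B E_{j,N}<s$; letting $s\downarrow\sup\{\Lambda(\alpha):\alpha\in\Omega\}$ concludes. The main technical obstacle is Step~2: a point $x\in E_\Phi(\Omega)$ has its own moving target $\alpha_x$, and one must quantify convergence uniformly over a countable partition in order to translate ``$[w]$ meets $E_{j,N}$'' into ``$w$ lies in some $F(\alpha_j,n,2\epsilon_j)$''. The compactness of $\Omega$ combined with the upper semi-continuity of $\Lambda$ is precisely what allows this reduction; all other ingredients (the radius $e^{-n}$ of cylinders in $\mathcal{B}_n(\Psi)$ and the length control $|w|\ge C_1 n$) are already isolated in Lemma~\ref{Moran-covering}.
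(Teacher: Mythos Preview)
Your proposal is correct and follows essentially the same approach as the paper: extract a finite subcover of $\Omega$ by compactness, decompose $E_\Phi(\Omega)$ into countably many pieces according to the rate of convergence of $\phi_n(x)/n$, bound the upper box dimension of each piece by $f(\alpha_j,n,\cdot)$, and conclude via the countable-cover characterization of packing dimension. The only cosmetic difference is that the paper absorbs the index $j$ into the sets $H(n,\eta)=\bigcup_{j}\bigcup_{w\in F(\alpha_j,n,\epsilon_j)}[w]$ and $G(k,\eta)=\bigcap_{n\ge k}H(n,\eta)$ rather than indexing the pieces by both $j$ and $N$ as you do; also note that the upper semi-continuity of $\Lambda$ you invoke is not actually needed here, only the pointwise monotone convergence $\Lambda(\alpha,\epsilon)\downarrow\Lambda(\alpha)$.
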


\begin{proof}
 Let
$\displaystyle
\Lambda(\alpha,\epsilon):=\limsup_{n\to\infty}\frac{\log
f(\alpha,n,\epsilon)}{n}, $ then $\Lambda(\alpha,\epsilon)\searrow
\Lambda(\alpha) $ when $\epsilon\searrow 0.$ Fix $\eta>0$. For each
$\alpha\in \Omega,$ there exists $\epsilon_\alpha>0$ such that for
any $0<\epsilon\leq \epsilon_\alpha$ we have $
\Lambda(\alpha,\epsilon)<\Lambda(\alpha)+\eta. $ Since
$\{B(\alpha,\epsilon_\alpha): \alpha\in \Omega\}$ is an open
covering of $\Omega$, we can find a finite covering
 $\{B(\alpha_1,\epsilon_1),\cdots, B(\alpha_s,\epsilon_s)\},$ where
$\epsilon_j=\epsilon_{{\alpha_j}}$. For each $n\in \N$ define
$$
H(n,\eta):= \bigcup_{j=1}^{s}\bigcup_{w\in
F(\alpha_j,n,\epsilon_j)}[w]\ \ \text{ and }\ \
G(k,\eta):=\bigcap_{n\geq k}H(n,\eta)$$

It is standard to prove that  $E_\Phi(\Omega)\subset
\bigcup_{k\in\N}G(k,\eta).$ Consequently

\begin{equation}\label{sup}
\dim_P E_\Phi(\Omega)\leq \sup_{k\in\N} \dim_P G(k,\eta).
\end{equation}
By definition, the set $ G(k,\eta)$ is covered by $\{[w]: w\in
F(\alpha_j,n,\epsilon_j); j=1,\cdots, s\}$ for any $n\geq k$. Since
each element in $\{[w]: w\in F(\alpha_j,n,\epsilon_j)\}$ is a ball
with radius $e^{-n}$, we get
\begin{eqnarray*}
&& \dim_{P} G(k,\eta) \leq\overline{\dim}_B G(k,\eta) \leq
\limsup_{n\to\infty}\frac{\log\sum_{j=1}^{s}
f(\alpha_j,n,\epsilon_j)}{n}\\
& \leq& \sup_{j=1,\cdots,s}\limsup_{n\to\infty}\frac{\log
f(\alpha_j,n,\epsilon_j)}{n}
 =
\sup_{j=1,\cdots,s}\Lambda(\alpha_j,\epsilon_j)
\leq\sup\{\Lambda(\alpha): \alpha\in \Omega\}+\eta.
\end{eqnarray*}

Combining this with $(\ref{sup})$ we get $\dim_P E_\Phi(\Omega)\leq
\sup\{\Lambda(\alpha): \alpha\in\Omega\}+\eta.$
\end{proof}

\subsection{
$\Lambda(\alpha)\leq {\mathcal E}(\alpha)$}\label{5.2} Our approach
is inspired by that of \cite{FFW}, which deals with the case that
$d_\Psi=d_1$ and $\Phi=(S_n\phi)_{n\geq 1}$ is additive, where
$\phi:\Sigma_A\to\R^d$ is continuous.

 To show this
inequality we need to approximate the almost additive potentials
$\Phi$ and $\Psi$ by two sequences of H\"{o}lder potentials. We
describe this procedure as follows.

 Given $\Phi\in\C_{aa}(\Sigma_A,T,d)$, for each $k\in\N$ we define
${\Phi}^{(k)}\in \C_{aa}(\Sigma_A,T,d)$  as follows. For each
$w\in\Sigma_{A,k}$ choose $x_w\in [w]$. For any $x\in[w]$ define $
\widetilde{\phi}_k(x):={\phi_k(x_w)}/{k}. $ Then
$\widetilde{\phi}_k$ depends only on the first $k$ coordinates of
$x\in\Sigma_A$ and  is H\"{o}lder continuous. Define
\begin{equation}\label{app-potential}
{\Phi}^{(k)}=(S_n\widetilde{\phi}_k)_{n=1}^{\infty}.
\end{equation}
  Thus ${\Phi}^{(k)}$ is additive and  H\"{o}lder continuous.

\begin{lemma}\label{control}
  We have $\Phi_{\min}\leq
\Phi_{\min}^{(k)}\leq\Phi_{\max}^{(k)}\leq \Phi_{\max}$. Moreover
$$
\|\phi_n-S_n\widetilde{\phi}_k\|\leq
d\Big(\frac{n}{k}|C(\Phi)|+5k\|\Phi\|+\frac{\|\Phi\|_k}{k}n\Big).
$$ Consequently $\|\Phi-\Phi^{(k)}\|_{\mbox{\tiny \rm lim}}\to 0$
when $k\to\infty.$
\end{lemma}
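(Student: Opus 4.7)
I would treat the two assertions in order. The first chain of inequalities is immediate: since $\widetilde{\phi}_k$ depends only on the first $k$ coordinates, the potential $\Phi^{(k)}=(S_n\widetilde{\phi}_k)_{n\ge 1}$ is strictly additive, so $C(\Phi^{(k)})$ may be taken to be $0$, whence $\Phi^{(k)}_{\max}=\max\widetilde{\phi}_k$ and $\Phi^{(k)}_{\min}=\min\widetilde{\phi}_k$. Applying \eqref{max-min} componentwise to $\phi_k(x_w)$ gives $\Phi_{\min}\le\widetilde{\phi}_k\le\Phi_{\max}$ pointwise, which yields the desired sandwich.

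For the quantitative bound I would reduce to the scalar case $d=1$ (handling each component separately and reassembling via the Euclidean norm, which is where the factor $d$ comes from). Fix $x\in\Sigma_A$ and $n\ge 1$. The proof rests on two ingredients that I combine. First, since $T^ix$ and the reference point $x_{(T^ix)|_k}$ share their first $k$ letters, the definition \eqref{normn} of $\|\Phi\|_k$ yields $|\phi_k(T^ix)-k\widetilde{\phi}_k(T^ix)|\le\|\Phi\|_k$ for every $i$, hence
$$ \Big|\tfrac{1}{k}\sum_{i=0}^{n-1}\phi_k(T^ix)-S_n\widetilde{\phi}_k(x)\Big|\le \tfrac{n\,\|\Phi\|_k}{k}. $$
Second, the almost additivity inequality \eqref{aa} gives $|\phi_k(T^ix)-\phi_{i+k}(x)+\phi_i(x)|\le |C(\Phi)|$, and summing over $0\le i<n$ telescopes to
$$ \Big|\sum_{i=0}^{n-1}\phi_k(T^ix)-\sum_{i=n}^{n+k-1}\phi_i(x)+\sum_{i=0}^{k-1}\phi_i(x)\Big|\le n|C(\Phi)|. $$
The two residual block sums are controlled by \eqref{aa} once more: for $n\le i<n+k$ a comparison with $\phi_{i-n}(T^nx)$ yields $|\phi_i(x)-\phi_n(x)|\le|C(\Phi)|+k\|\Phi\|$, while for $0\le i<k$ the estimate \eqref{max-min} gives $|\phi_i(x)|\le k\|\Phi\|$.

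Plugging these into the telescoped identity, dividing by $k$, and using $|C(\Phi)|\le 2\|\Phi\|$ to absorb stray constants into the $k\|\Phi\|$ term produces the scalar analogue of the advertised inequality (the explicit constant $5$ is loose but convenient bookkeeping). The vector-valued inequality then follows coordinatewise, the prefactor $d$ arising when bounding the Euclidean norm by $\sqrt{d}$ times the max of the components and using $|C(\Phi^j)|\le |C(\Phi)|$, $\|\Phi^j\|\le \|\Phi\|$, $\|\Phi^j\|_k\le \|\Phi\|_k$. Finally, dividing the main inequality by $n$ and letting $n\to\infty$ yields
$$ \|\Phi-\Phi^{(k)}\|_{\mbox{\tiny \rm lim}}\le d\Big(\frac{|C(\Phi)|}{k}+\frac{\|\Phi\|_k}{k}\Big), $$
which tends to $0$ as $k\to\infty$ by Lemma~\ref{multiplictive}(1).

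The only real obstacle is the careful accounting of constants in the telescoping step to match the stated bound; no idea beyond iterated use of \eqref{aa} and the definition of $\widetilde{\phi}_k$ is needed.
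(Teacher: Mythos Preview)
Your proof is correct. Both you and the paper reduce to the scalar case and compare $\phi_n(x)$ to the Ces\`aro-type average $\tfrac{1}{k}\sum_{i=0}^{n-1}\phi_k(T^ix)$, then pass to $S_n\widetilde{\phi}_k$ via the oscillation bound $\|\Phi\|_k$. The difference is only in how that first comparison is carried out: the paper decomposes $[0,n)$ into blocks of length $k$ starting at each offset $j\in\{0,\ldots,k-1\}$, bounds $\phi_n$ by the block sums using almost additivity, and then averages over $j$ so that the block sums reassemble into $\tfrac{1}{k}\sum_{i}\phi_k(T^ix)$; you instead use the single telescoping identity $\phi_k(T^ix)=\phi_{i+k}(x)-\phi_i(x)+O(C(\Phi))$ and control the resulting boundary sums $\sum_{i=n}^{n+k-1}\phi_i$ and $\sum_{i=0}^{k-1}\phi_i$ directly. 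Your route avoids the offset-averaging step and is arguably a bit cleaner; in fact your componentwise reassembly via $\sqrt{d}\max_j$ gives the sharper prefactor $\sqrt{d}$ rather than the $d$ stated in the lemma. One small remark: the inequality $|C(\Phi)|\le 2\|\Phi\|$ you invoke is indeed true (even $|C(\Phi)|\le\|\Phi\|$ holds, since $\Phi_{\max}-\Phi_{\min}\ge 2C(\Phi)$ and $\Phi_{\max}-\Phi_{\min}\le 2\|\Phi\|$), so that absorption step is justified.
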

This lemma will be proved at the end of this section.

\noindent{\bf Proof of $\Lambda(\alpha)\leq {\mathcal E}(\alpha)$.}\
Given ${\Phi}\in \C_{aa}(\Sigma_A,T,d)$ and $\Psi\in
\C_{aa}^-(\Sigma_A,T)$.

\noindent {\bf Claim:} Given $\epsilon>0$ we have
\begin{equation} \label{claim}
\limsup_{n\to\infty}\frac{\log
f(\alpha,n,\epsilon,\Phi,\Psi)}{n}\leq\sup_{|\Phi_\ast(\nu)-\alpha|\leq
5\epsilon}\frac{h_\nu}{-\Psi_\ast(\nu)+O(\epsilon)}.
\end{equation}

Let us at first assume the claim holds and finish the proof. Notice
that the set of invariant measures $\nu$ such that
$|\Phi_\ast(\nu)-\alpha|\leq 5\epsilon$ is compact, so by using the
upper semi-continuity of $h_\nu$  and letting $\epsilon$ tend to 0
we can find an invariant measure $\nu_0$ such that
$\Phi_\ast(\nu_0)=\alpha$ and
$$
\Lambda(\alpha)=\lim_{\epsilon\to 0} \limsup_{n\to\infty} \frac{\log
f(\alpha,n,\epsilon,\Phi,\Psi)}{n} \leq
\frac{h_{\nu_0}}{-\Psi_\ast(\nu_0)}\leq {\mathcal E}(\alpha).
$$

Next we show that the claim holds. In the following $C=C(\Psi),
C_i=C_i(\Psi)$ for $i=1,2.$ Fix $\epsilon>0$. For any $k\in\N$
define $\Phi^{(k)} $ and $\Psi^{(k)}$ according to
$(\ref{app-potential})$.
 By Lemma \ref{control}, we can find $k\in\N$
such that
$$
\|\Phi-\Phi^{(k)}\|_{\mbox{\tiny \rm lim}},\ \
\|\Psi-\Psi^{(k)}\|_{\mbox{\tiny \rm lim}}<\epsilon.
$$
Fix this $k$, then there exists $N_1\in\N$ such that when $n\geq
N_1$
 $$
 \|\phi_n-S_n\widetilde{\phi}_k\|_\infty\leq n\epsilon\ \
 \text{ and } \ \ \|\psi_n-S_n\widetilde{\psi}_k\|_\infty\leq n\epsilon.
 $$

 For any $w\in {\mathcal B}_n(\Psi)$, we have $C_1 n\leq |w|\leq C_2n,$
 thus for $n\geq N_1/C_1$, we have
$F(\alpha,n,\epsilon,\Phi,\Psi)\subset
F(\alpha,n,2\epsilon,\Phi^{(k)},\Psi),$ consequently
\begin{equation}\label{card-3}
f(\alpha,n,\epsilon,\Phi,\Psi)\leq
f(\alpha,n,2\epsilon,\Phi^{(k)},\Psi).
\end{equation}

Following \cite{FFW}, we introduce a way to classify the words in
$F(\alpha,n,2\epsilon,\Phi^{(k)},\Psi),$ by which we can estimate
the cardinality of it effectively.

 For any word $w\in \Sigma_{A,\ast}$ such that $|w|\geq
k$, we define the counting  function $\theta_w:\Sigma_{A,k}\to \N$
as $\theta_w(u)=\#\{j: w_j\cdots w_{j+k-1}=u\}$, which counts the
numbers of times the word $u$ appears in $w.$ It is clear that $
h_{\theta_w}:=\sum_u\theta_w(u)=|w|-k+1. $ We call it the {\it
height} of $\theta_w.$

Let
${\mathcal P}_k^{(n)}=\{\theta_w: w\in
F(\alpha,n,2\epsilon,\Phi^{(k)},\Psi)\}.$
Then $\#{\mathcal P}_k^{(n)}\leq (C_2n)^{m^k}$. For each $\theta\in
{\mathcal P}_k^{(n)}$, let
$$
{\mathcal T}(\theta)=\{w: w\in
F(\alpha,n,2\epsilon,\Phi^{(k)},\Psi),\ \theta_{w}=\theta\}.
$$
Write $\Gamma(\theta):=\#{\mathcal T}(\theta).$  Then we have
$$f(\alpha,n,2\epsilon,\Phi^{(k)},\Psi)=
 \sum_{\theta\in{\mathcal P}_k^{(n)}}\Gamma(\theta)
\leq (C_2n)^{m^k}\max_{\theta\in{\mathcal
P}_k^{(n)}}\Gamma(\theta).$$
Consequently
\begin{equation}\label{card-4}
 \frac{\log f(\alpha,n,2\epsilon,\Phi^{(k)},\Psi)}{n}\leq
\max_{\theta\in{\mathcal P}_k^{(n)}}\frac{\log
\Gamma(\theta)}{n}+m^kO(\frac{\log n}{n}).
\end{equation}

In the following  we  estimate $\log \Gamma(\theta)/n$ for each
$\theta\in {\mathcal P}_k^{(n)}.$ Since it is hard to estimate it
directly, we turn to the estimations of $\log
\Gamma(\theta)/h_\theta$ and $n/h_\theta.$

Following ~\cite{FFW} we define $\triangle_k^+$, the set of all
positive functions $p$ on $\Sigma_{A,k}$ satisfying the following
two relations:
$$
\sum_{w\in\Sigma_{A,k}}p(w)=1;\ \  \sum_{w}p(ww_1w_2\cdots
w_{k-1})=\sum_{w}p(w_1w_2\cdots w_{k-1}w).
$$

It is  known (see \cite{FFW}) that for any $\eta>0$, there is a
positive integer
 $N=N(\eta)$ such that for any $w\in \Sigma_{A,l+k-1}$ with
$l>N$, there exists a probability vector $p\in \triangle_k^+$ such
that
$$\Big|\frac{\theta_w(u)}{l}-p(u)\Big |<\eta, \ \ p(u)>\frac{\eta}{m^{k+1}}.$$
We discard the trivial case where $\Phi\equiv 0$ and fix  $\eta>0$
such that $\eta<\epsilon/(m^k\|\Phi\|).$

Now take any $\displaystyle n\geq \max\{\frac{N_1}{C_1},
\frac{4k\|\Phi\|}{C_1\epsilon}\}$ and fix a $\theta\in {\mathcal
P}_k^{(n)}$.
 Take
 $$
 w\in
F(\alpha,n,2\epsilon,\Phi^{(k)},\Psi)
$$ such that
$\theta=\theta_{w}$, then $|w|=h_\theta+k-1.$ Fix a
$p\in\triangle_k^+$ as described above. Consider the Markov measure
$\nu_p$ corresponding to $p$ (see ~\cite{FFW} for the definition and
related properties). For any word $v\in{\mathcal T}(\theta)$ we have
\begin{eqnarray*}
\nu_p([v])=\frac{p(v|_k)}{t(v|_k)}\prod_{|u|=k} t(u)^{\theta(u)}
\geq \frac{\eta}{m^{k+1}}\prod_{|u|=k} t(u)^{\theta(u)}:=\rho,
\end{eqnarray*}
where
$$
\displaystyle t(a_1\cdots a_k)=\frac{p(a_1\cdots a_k)}
{\sum_{\epsilon}p(a_1\cdots a_{k-1}\epsilon)}.
$$ Thus
$\rho\cdot\Gamma(\theta)\leq \nu_p(\bigcup_{v\in{\mathcal T}
 (\theta)}[v])\leq 1$ and consequently
$$\Gamma(\theta)\leq \frac{1}{\rho}=
\frac{m^{k+1}}{\eta}\prod_{|u|=k}t(u)^{-\theta(u)}.$$
Since $C_1n\leq |w|=h_\theta+k-1\leq C_2n$, we have $h_\theta\sim n$
when $n\to\infty.$ Notice that $\eta/m^{k+1}\leq t(u)\leq 1$, thus
\begin{eqnarray}
\nonumber\frac{\log\Gamma(\theta)}{h_\theta}&\leq& O(\frac{k}{n})+
O(\frac{|\log\eta|}{n})-\sum_{|u|=k}\frac{\theta(u)}{h_\theta}\log
t(u)\\
\nonumber\label{card-1}&\leq& o(1)+
O(\frac{|\log\eta|}{n})-\sum_{|u|=k}p(u)\log
t(u)+m^k\eta(|\log \eta|+(k+1)\log m)\\
&=& h_{\nu_p}+ o(1)+ O(\frac{|\log\eta|}{n})+O(\eta|\log\eta|).
\end{eqnarray}

Now we estimate $n/h_\theta.$ Let $x_0\in[w]$. By $(\ref{444})$ we
have
$$
 -n-C(\Psi)-2\|\Psi\|_{|w|}+\Psi_{\min}\leq
\psi_{|w|}(x_0)\leq\sup_{x\in[w]}\psi_{|w|}(x)\leq -n,
$$
Since $n\geq N_1/C_1,$ we have $|w|\geq C_1n\geq N_1$. Thus
$\|\psi_{|w|}-S_{|w|}\widetilde{\psi}_k\|_\infty\leq |w|\epsilon$.
We get
$$
-n-C(\Psi)-2\|\Psi\|_{n}^\star+\Psi_{\min}-C_2n\epsilon\leq
S_{|w|}\widetilde{\psi}_k(x_0)\leq -n+C_2n\epsilon.
$$
Notice that $\widetilde \psi_k$ is negative and $|w|=h_\theta+k-1$,
thus
\begin{equation}\label{comparible}
 -n-C(\Psi)-2\|\Psi\|_{n}^\star+\Psi_{\min}-C_2n\epsilon\leq
S_{h_\theta}\widetilde{\psi}_k(x_0)\leq -n+C_2n\epsilon+k\|\Psi\|.
\end{equation}
On the other hand
\begin{eqnarray*}
&&\frac{S_{h_\theta}\widetilde{\psi}_k(x_0)}{h_\theta}=
\sum_{|u|=k}\frac{\theta(u)}{h_\theta}\widetilde{\psi}_k(x_u)
=\sum_{|u|=k}p(u)\widetilde{\psi}_k(x_u)+m^kO(\eta)\\
&=&\int\widetilde{\psi}_k d\nu_p+O(\eta)
={\Psi}^{(k)}_\ast(\nu_p)+O(\eta)
=\Psi_\ast(\nu_p)+O(\epsilon)+O(\eta).
\end{eqnarray*}
Combining this with $(\ref{comparible})$ and the fact that
$\|\Psi\|_n^\star/n=o(1)$ we get
\begin{equation}\label{card-2}
\frac{n}{h_\theta}=-\Psi_\ast(\nu_p)+O(\epsilon)+O(\eta)+o(1).
\end{equation}
 Combine $(\ref{card-1})$ and $(\ref{card-2})$ we get
\begin{eqnarray}\label{card-5}
\frac{\log\Gamma(\theta)}{n} &\leq&\frac{h_{\nu_p}+ o(1)+
O(\frac{|\log\eta|}{n})+O(\eta|\log\eta|)}
{-\Psi_\ast(\nu_p)+O(\epsilon)+O(\eta)+o(1)}.
\end{eqnarray}

Next we  show that $|\Phi_\ast(\nu_p)-\alpha|\leq 5\epsilon.$
 Since $w\in F(\alpha,n,2\epsilon,\Phi^{(k)},\Psi)$, there
 exists $y_0\in[w]$ such that
 $|S_{|w|}\widetilde{\phi}_k(y_0)/|w|-\alpha|\leq 2\epsilon.$ Note that
 $|w|=h_{\theta_w}+k-1,$ we have
 \begin{eqnarray*}
 &&|\Phi_\ast(\nu_p)-\alpha|
 \leq  |{\Phi}^{(k)}_\ast(\nu_p)-\alpha|+
 |{\Phi}_\ast(\nu_p)-{\Phi}^{(k)}_\ast(\nu_p)|\\
 &\leq&  |\int\widetilde{\phi}_k
 d\nu_p-\alpha|+\epsilon
 =  |\sum_{|u|=k}p(u)\widetilde{\phi}_k(x_u)-\alpha|+\epsilon\\
 &\leq&  |\sum_{|u|=k}\frac{\theta_{w}(u)}{h_{\theta_w}}
 \widetilde{\phi}_k(x_u)-\alpha|+m^k\eta\|\widetilde{\phi}_k\|+\epsilon \ \
 \\
 &\leq&  |\frac{S_{h_{\theta_w}}{\widetilde\phi}_k(x)}{h_{\theta_w}}
 -\alpha|+m^k\eta\|\Phi\|+\epsilon \
  \ (\text{ for any } x\in[w])\\
  &\leq&  |\frac{S_{|w|}{\widetilde\phi}_k(x)}{|w|}
 -\alpha|+m^k\eta\|\Phi\|+\epsilon+\frac{2k\|\Phi\|}{|w|} \
  \ (\text{ for any } x\in[w])\\
 &\leq&|\frac{S_{|w|}{\widetilde\phi}_k(x)}{|w|}
 -\frac{S_{|w|}{\widetilde\phi}_k(y_0)}{|w|}|+
 |\frac{S_{|w|}{\widetilde\phi}_k(y_0)}{|w|}-\alpha|
 +m^k\eta\|\Phi\|+\epsilon +\frac{2k\|\Phi\|}{|w|}\\
 &\leq&\frac{2k\|\Phi\|}{|w|}+
 2\epsilon
 +m^k\eta\|\Phi\|+\epsilon +\frac{2k\|\Phi\|}{|w|}\leq
 \frac{4k\|\Phi\|}{C_1n} +m^k\eta\|\Phi\|+3\epsilon.
\end{eqnarray*}
By our choice of $\eta$ we have $m^k\eta\|\Phi\|<\epsilon$.
Moreover, since  $n\geq 4k\|\Phi\|/(C_1\epsilon)$ we have
$4k\|\Phi\|/(C_1n)\leq \epsilon.$ Thus
$|\Phi_\ast(\nu_p)-\alpha|\leq 5\epsilon.$

 Again by the compactness of the set
  $\{\nu: |\Phi_\ast(\nu)-\alpha|\leq 5\epsilon \}$
   and the  upper semi-continuity of
 $h_\nu$,
  Combining  $(\ref{card-3}), (\ref{card-4})$ and $(\ref{card-5})$ we
conclude that
$$
\limsup_{n\to\infty}\frac{\log f(\alpha,n,\epsilon,\Phi,\Psi)}{n}
\leq \sup_{|\Phi_\ast(\nu)-\alpha|\leq 5\epsilon}\frac{h_\nu+
O(\eta|\log\eta|)} {-\Psi_\ast(\nu)+O(\epsilon)+O(\eta)}.
$$
Let $\eta\to 0$ we get $(\ref{claim}).$\hfill $\Box$

\medskip

\noindent {\bf Proof of Lemma~\ref{control}.} At first we assume
$\Phi\in\C_{aa}(\Sigma_A,T)$. By $(\ref{max-min})$ we get
$\Phi_{\min}\leq\widetilde{\phi}_k\leq \Phi_{\max}.$ Since
$\Phi^{(k)}$ is additive, we have $
\Phi_{\min}\leq\widetilde{\phi}_{k\min}=
\Phi_{\min}^{(k)}\leq\Phi_{\max}^{(k)}=\widetilde{\phi}_{k\max}\leq
\Phi_{\max}. $

For $n\in \N$, write $n=pk+s$ with $0\leq s<k.$ Write $C=C(\Phi)$,
by using the almost additivity of $\Phi$ for each $0\leq j\leq k-1$
we have
\begin{eqnarray*}
\phi_n(x)&\leq&\phi_j(x)+\sum_{l=0}^{p-2}\phi_k(T^{j+lk}x)+
\phi_{n-(j+(p-1)k)}(T^{j+(p-1)k}x)+pC\\
&\leq& pC+3k\|\Phi\|+\sum_{l=0}^{p-2}\phi_k(T^{j+lk}x),
\end{eqnarray*}
hence
\begin{eqnarray*}
\phi_n(x) &\leq&
pC+3k\|\Phi\|+\sum_{j=0}^{k-1}\sum_{l=0}^{p-2}\phi_k(T^{j+lk}x)/k\\
&\leq & pC+5k\|\Phi\|+\sum_{j=0}^{n-1}\phi_k(T^jx)/k\le
S_n\widetilde{\phi}_k(x)+pC+5k\|\Phi\|+\frac{\|\Phi\|_k}{k}n.
\end{eqnarray*}
Similarly we have $ \phi_n(x) \geq
S_n\widetilde{\phi}_k(x)-pC-5k\|\Phi\|-\frac{\|\Phi\|_k}{k}n,$ hence
$$
\|\phi_n-S_n\widetilde{\phi}_k\|_\infty\leq
pC+5k\|\Phi\|+\frac{\|\Phi\|_k}{k}n \  \text{ and }\
\|\Phi-\Phi^{(k)}\|_{\mbox{\tiny \rm lim}}\leq
\frac{C}{k}+\frac{\|\Phi\|_k}{k}\to 0\ (k\to\infty).
$$

If $\Phi=(\Phi^1,\cdots,\Phi^d)\in\C_{aa}(\Sigma_A,T,d)$,
 applying the result just proven to each component of $\Phi$
  we get the result. \hfill $\Box$

 \subsection{ ${\mathcal E}(\alpha)\leq
 \D(\alpha)$}\label{5.3}

 It is contained in Proposition \ref{lower-unify} (see Section~\ref{proofmainth}).

\section{Proof of  Theorem~ \ref{main-one-sided}(2)}

 We need to describe the $\Psi$- and $\Phi$-
dependence of the function $\Lambda=\Lambda_\Phi^\Psi.$  Recall that
 $$\Lambda_{\Phi}^{\Psi}(\alpha,\epsilon)=
 \limsup_{n\to\infty}\frac{\log f(\alpha,n,\epsilon,\Phi,\Psi)}{n}$$
 and we know that
 $\Lambda_{\Phi}^{\Psi}(\alpha,\epsilon)\searrow\Lambda_{\Phi}^{\Psi}(\alpha)$
 as $\epsilon \searrow 0$.

 \begin{lemma}\label{Psi-dependence}
  \begin{enumerate}
  \item Assume $\Psi,\Upsilon\in \C_{aa}^{-}(\Sigma_A,T)$, then we have
 \begin{equation}\label{approximation-1}
 |D(\Psi)-D(\Upsilon)|\leq 3\log m\cdot
 \Big (1+\frac{1}{|\Psi_{\max}|}\Big )\Big
 (1+\frac{1}{|\Upsilon_{\max}|}\Big )\|\Psi-\Upsilon\|_{\mbox{\tiny \rm lim}}.
 \end{equation}

\item Assume
$\delta_0:=\|\Psi-\Upsilon\|_{\mbox{\tiny \rm lim}}\leq
1/(4C_2(\Psi))$. Let  $\Phi,\Theta\in \C_{aa}(\Sigma_A,T,d)$. Fix
$\eta>0$ and let $\beta\in L_\Theta.$ Then for any $\alpha\in
B(\beta,\eta)\cap L_\Phi$ we have
 \begin{equation}\label{approximation-3}
 \Lambda_{\Phi}^{\Psi}(\alpha)\leq \frac{2C_2(\Psi)\log
m}{|\Upsilon_{\max}|}\delta_0+(1-C_2(\Psi)\delta_0)\Lambda_{\Theta}^{\Upsilon}(\beta,
a_0+\kappa\delta_0+2\eta),
 \end{equation}
 where
$a_0=\|\Phi-\Theta\|_{\mbox{\tiny \rm lim}}$ and
$\kappa=\kappa(\Psi,\Upsilon,\Phi)={18\|\Phi\|C_2(\Psi)}
|\Upsilon_{\min}|/|{\Upsilon_{\max}}|.$
\end{enumerate}
\end{lemma}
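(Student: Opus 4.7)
The plan is to exploit the defining estimate $\delta_0 = \|\Psi-\Upsilon\|_{\mbox{\tiny \rm lim}}$, which by $(\ref{max-min})$ gives $|\log \Psi[w] - \log \Upsilon[w]| \le \|\psi_{|w|} - \upsilon_{|w|}\|_\infty \le |w|(\delta_0+o(1))$ as $|w|\to\infty$. Combined with $|w| \le C_2(\Psi)n$ from $(\ref{constant})$, every $w \in \mathcal{B}_n(\Psi)$ satisfies $\Upsilon[w] \le \exp(-n(1-C_2(\Psi)\delta_0) + o(n))$. Setting $n' = \lfloor n(1-C_2(\Psi)\delta_0)\rfloor$, where the hypothesis $\delta_0\le 1/(4C_2(\Psi))$ keeps $n'$ comparable to $n$, I then associate to each such $w$ the unique prefix $u \preceq w$ with $u \in \mathcal{B}_{n'}(\Upsilon)$. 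Combining Lemma \ref{multiplictive}(2), which yields $\log \Upsilon[w] \le C(\Upsilon) + \log \Upsilon[u] + (|w|-|u|)\Upsilon_{\max}$, with the lower bound $\log \Upsilon[w] \ge \log \Psi[w] - |w|(\delta_0+o(1)) \ge -n - C_2(\Psi)\delta_0 n - o(n)$ obtained from $(\ref{444})$ and the norm comparison, I would extract the key length-shift estimate
\[
|w|-|u| \le \frac{2C_2(\Psi)\delta_0\, n}{|\Upsilon_{\max}|} + o(n).
\]

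For part (1), the map $w \mapsto u$ from $\mathcal{B}_n(\Psi)$ to $\mathcal{B}_{n'}(\Upsilon)$ has fibres of size at most $m^{|w|-|u|}$, so
\[
\frac{\log \#\mathcal{B}_n(\Psi)}{n} \le \frac{(|w|-|u|)\log m}{n} + \frac{n'}{n}\cdot\frac{\log \#\mathcal{B}_{n'}(\Upsilon)}{n'}.
\]
Sending $n\to\infty$ yields $D(\Psi) \le \frac{2C_2(\Psi)\log m}{|\Upsilon_{\max}|}\delta_0 + (1-C_2(\Psi)\delta_0) D(\Upsilon)$. Exchanging the roles of $\Psi$ and $\Upsilon$ gives the analogous inequality, and the two combined with the a priori bound $D(\cdot) \le C_2(\cdot)\log m$ from $(\ref{ful-dim-1})$ produces the symmetric estimate $(\ref{approximation-1})$ after absorbing the $(1-C_2(\cdot)\delta_0) D(\cdot)$ remainder into the right-hand side.

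For part (2), the same prefix map should send $F(\alpha,n,\epsilon,\Phi,\Psi)$ into $F(\beta,n',\epsilon',\Theta,\Upsilon)$ with $\epsilon' = a_0 + \kappa\delta_0 + 2\eta$. Given a witness $x \in [w]$ with $|\phi_{|w|}(x)/|w|-\alpha| < \epsilon$, I would decompose
\[
|\theta_{|u|}(x)/|u|-\beta| \le \|\theta_{|u|}-\phi_{|u|}\|_\infty/|u| + |\phi_{|u|}(x)/|u|-\phi_{|w|}(x)/|w|| + \epsilon + \eta,
\]
bounding the first summand by $a_0+o(1)$, and the middle one by $2\|\Phi\|(|w|-|u|)/|u| + O(1/|u|)$ via almost additivity of $\Phi$ (routing the comparison through $\Phi$ rather than $\Theta$ is what forces $\kappa$ to depend only on $\|\Phi\|$). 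Plugging in $|w|-|u| \le 2C_2(\Psi)\delta_0 n/|\Upsilon_{\max}|$ and $|u|\ge n'/|\Upsilon_{\min}|$, using $1/(1-C_2(\Psi)\delta_0) \le 4/3$ from the hypothesis, and absorbing the several slack terms into the conservative constant $\kappa=18\|\Phi\|C_2(\Psi)|\Upsilon_{\min}|/|\Upsilon_{\max}|$, produces the $\kappa\delta_0$ contribution. Counting at most $m^{|w|-|u|}$ preimages per $u$, taking logarithms, dividing by $n$, and sending $\epsilon\to 0$ yields $(\ref{approximation-3})$.

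The main technical difficulty is that the estimate simultaneously perturbs the metric (swapping $d_\Psi$ for $d_\Upsilon$, controlled by $\delta_0$) and the potential (swapping $\Phi$ for $\Theta$, controlled by $a_0$), and these perturbations interact through the length gap $|w|-|u|$; the cross-term is exactly what manifests as $\kappa\delta_0$. Keeping track of the hierarchy of $o(n)$ errors coming from $C(\Phi), C(\Theta), C(\Psi)$, the fluctuations $\|\Phi\|_{|w|}, \|\Psi\|_{|w|}$, and the gap between the genuine sup-norm $\|\cdot\|_\infty$ and the limit norm $\|\cdot\|_{\mbox{\tiny \rm lim}}$ so that they collapse into harmless corrections when $n \to \infty$ without contaminating the leading coefficients is the central bookkeeping task; the conservative constant $18$ suggests that the author prefers generous bounds to sharp ones at this stage.
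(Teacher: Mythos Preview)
Your proposal is correct and follows essentially the same approach as the paper: compare $\Psi[w]$ and $\Upsilon[w]$ via $\|\Psi-\Upsilon\|_{\mbox{\tiny \rm lim}}$, pass from $w\in\mathcal{B}_n(\Psi)$ to its prefix $u\in\mathcal{B}_{n'}(\Upsilon)$ with $n'=[n(1-C_2(\Psi)\delta)]$, control $|w|-|u|$ by $2C_2(\Psi)\delta n/|\Upsilon_{\max}|+o(n)$, and count fibres by $m^{|w|-|u|}$; for part~(2) the triangle decomposition routing through $\phi_{|u|}$ and the bound $|u|\ge n/(2|\Upsilon_{\min}|)$ are exactly the paper's. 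The only cosmetic differences are that the paper works with an auxiliary $\delta>\delta_0$ and sends $\delta\downarrow\delta_0$ at the end (rather than carrying $o(1)$ terms), and in part~(1) it extracts \emph{both} inequalities $(1-C_2(\Psi)\delta)D(\Upsilon)\le D(\Psi)\le(1-C_2(\Psi)\delta)D(\Upsilon)+2C_2(\Psi)\delta\log m/|\Upsilon_{\max}|$ from a single direction (using that the prefix map is surjective) rather than symmetrizing by exchanging $\Psi,\Upsilon$ as you do---but your route is equally valid.
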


 \medskip

Let us prove Theorem \ref{main-one-sided} (2). Suppose first that
$\Psi$ is H\"{o}lder continuous. Let $t$ be the solution of the
equation $P(t\Psi)=0$ and  $\mu$ be the unique equilibrium state of
$t\Psi$, where $P(t\Psi)$ is the topological pressure of $t\Psi$.
The measure $\mu$ is ergodic (\cite{Bow75}), and
$\dim_H\Sigma_A=\dim_H\mu $ (\cite{Bow79}). Moreover $t$ is also the
box dimension of $(\Sigma_A,d_\Psi)$. Consequently, $t=D(\Psi)$ by
\eqref{full-dim}. Let $\alpha=\Phi_\ast(\mu)$. By the sub-additive
ergodic theorem we have $\mu(E_\Phi(\alpha))=1$, consequently
$\D(\alpha)=D(\Psi).$ Thus, when $\Psi$ is a H\"{o}lder potential
the result holds.

Next we  assume $\Psi\in\C_{aa}^{-}(\Sigma_A,T)$. Define
$\Psi^{(n)}$ according to $(\ref{app-potential})$, then we have
$$
\lim_{n\to\infty}\|\Psi-\Psi^{(n)}\|_{\mbox{\tiny \rm lim}}= 0\ \
\text{ and }\ \
|\Psi_{\max}|\leq|\Psi_{\max}^{(n)}|\leq|\Psi_{\min}|.
$$ and  by
$(\ref{approximation-1})$ we have $\lim_{n\to\infty} D(\Psi^{(n)})=
D(\Psi)$. Let $\mu_n$ be the unique equilibrium state of
$D(\Psi^{(n)})\cdot\Psi^{(n)}$ and define
$\alpha_n=\Phi_\ast(\mu_n)$. Then $\alpha_n\in L_\Phi$ and
$\Lambda_{\Phi}^{\Psi^{(n)}}(\alpha_n)=D(\Psi^{(n)}).$ Let $\alpha$
be a limit point of the sequence $\{\alpha_n: n\in \N\}$. Without
loss of generality we assume $\alpha=\lim_{n\to\infty}\alpha_{n}$.
By $(\ref{approximation-3})$ we have
\begin{equation}\label{555}
\Lambda_{\Phi}^{\Psi^{(n)}}(\alpha_n)\leq
 \frac{2C_2(\Psi^{(n)})\log
m}{|\Psi_{\max}|}\delta_n+(1-C_2(\Psi^{(n)})\delta_n)
 \Lambda_{\Phi}^{\Psi}(\alpha,
 \kappa_n\delta_n+2\eta_n),
 \end{equation}
where $\delta_n:=\|\Psi-\Psi^{(n)}\|_{\mbox{\tiny \rm lim}}$ and
$$
 C_2(\Psi^{(n)})=1+\frac{1}{|\Psi_{\max}^{(n)}|},\
\kappa_n=\frac{18\|\Phi\|C_2(\Psi^{n})|\Psi_{\min}|}{
{|\Psi_{\max}|}},\ \eta_n=|\alpha-\alpha_n|.
$$
By Lemma \ref{control} we have  $C_2(\Psi^{(n)})\leq
1+1/|\Psi_{\max}|, $ thus we can rewrite $(\ref{555})$ as
$$
D(\Psi^{(n)})\leq d_1\delta_n+\Lambda_{\Phi}^{\Psi}(\alpha,
d_2\delta_n+2\eta_n).
$$
Letting $n$ tend to $\infty$ we get $D(\Psi)\leq
\Lambda_{\Phi}^{\Psi}(\alpha).$ By the definition of box dimension
we have $\dim_B\Sigma_A\leq D(\Psi)$. Thus we have
$$
D(\Psi)\leq\Lambda_{\Phi}^{\Psi}(\alpha)=\dim_H E_\Phi(\alpha)\leq
\dim_H \Sigma_A\leq \dim_B\Sigma_A\leq D(\Psi),
$$
and we get the equality. \hfill $\Box$

\noindent {\bf Proof of Lemma~\ref{Psi-dependence}.} (1) \ Write
$\Psi=(\psi_n)_{n=1}^{\infty}$ and
$\Upsilon=(\upsilon_n)_{n=1}^{\infty}.$
  By the definition of $\|\cdot\|_{\mbox{\tiny \rm lim}}$,
  for any $\delta>\|\Psi-\Upsilon\|_{\mbox{\tiny \rm lim}},$
  there exist $N\in\N,$ such that for any $n\geq N$ we have
 $$
 \psi_n(x)-n\delta\leq \upsilon_n(x)\leq \psi_n(x)+n\delta,
 $$
 consequently for any $w\in \Sigma_{A,\ast}$ with $|w|$ large enough we have
 $$\Psi[w]e^{-|w|\delta}\leq\Upsilon[w]\leq\Psi[w] e^{|w|\delta}.$$
 Given $w\in {\mathcal B}_n(\Psi)$, by $(\ref{constant})$ and $(\ref{444})$ we have
 $$
 e^{\Psi_{\min}-C(\Psi)-\|\Psi\|_{n}^\star-n(1+C_2(\Psi)\delta)}
 \leq\Upsilon[w]\leq e^{-n(1-C_2(\Psi)\delta)}.
 $$
 This implies that there exists $u\prec w$ such
that $u\in {\mathcal B}_{[n(1-C_2(\Psi)\delta)]}(\Upsilon)$, hence
we have
\begin{equation}\label{upper}
\#{\mathcal B}_{[n(1-C_2(\Psi)\delta)]}(\Upsilon)\leq \#{\mathcal
B}_{n}(\Psi).
\end{equation}

Let  $c_1(n)=-\Psi_{\min}+C(\Psi)+\|\Psi\|_{n}^\star$. We have
$c_1(n)>0$ and $c_1(n)=o(n)$.  Write $w=uw^\prime$. The same proof
as that of
 the claim in Proposition \ref{dim-formular-1} yields
 $|w^\prime|\leq (c_1(n)+2nC_2(\Psi)\delta+C(\Upsilon))/
 |\Upsilon_{\max}|.$ Thus we can conclude that
\begin{equation}\label{lower}
\#{\mathcal B}_{[n(1-C_2(\Psi)\delta)]}(\Upsilon)\geq \#{\mathcal
B}_{n}(\Psi)m^{-\big(c_1(n)+2nC_2(\Psi)\delta+C(\Upsilon)\big)/
 |\Upsilon_{\max}|}.
 \end{equation}
 Combining $(\ref{upper})$, $(\ref{lower})$ and $(\ref{full-dim})$ we get
 $$\big(1-C_2(\Psi)\delta\big)D(\Upsilon)\leq D(\Psi)\leq
 \big(1-C_2(\Psi)\delta\big)D(\Upsilon)+2C_2(\Psi)\delta\log m/|\Upsilon_{\max}|.$$
By using  $(\ref{ful-dim-1})$  we get $|D(\Psi)-D(\Upsilon)|\leq
a(m,\Psi,\Upsilon)\delta,$ where
$$a(m,\Psi,\Upsilon)=3C_2(\Psi)C_2(\Upsilon)\log
m=3\Big (1+\frac{1}{|\Psi_{\max}|}\Big )\Big
(1+\frac{1}{|\Upsilon_{\max}|}\Big )\log m.$$ Since
$\delta>\|\Psi-\Upsilon\|_{\mbox{\tiny \rm lim}}$ is arbitrary, we
get $|D(\Psi)-D(\Upsilon)|\leq
a(m,\Psi,\Upsilon)\|\Psi-\Upsilon\|_{\mbox{\tiny \rm lim}}.$

\medskip

(2) Now let  $\Phi,\Theta\in\C_{aa}(\Sigma_A,T,d)$. Fix
$0<\epsilon<\|\Phi\|$, $\beta\in L_\Theta$, $\alpha\in
B(\beta,\eta)\cap L_\Phi$, and
$\delta>\|\Psi-\Upsilon\|_{\mbox{\tiny \rm lim}}$.

Let $w\in F(\alpha,n,\epsilon,\Phi,\Psi)$. There exists $x\in [w]$
such that
 $\big|\phi_{|w|}(x)-|w|\alpha\big|\leq |w|\epsilon.$
We have seen in proving (1) that  $w=uw^\prime$ with $u\in {\mathcal
B}_{[n(1-C_2(\Psi)\delta)]}(\Upsilon)$ and
$$
|w^\prime|\leq (c_1(n)+2nC_2(\Psi)\delta+C(\Upsilon))/
 |\Upsilon_{\max}|.
 $$
  Notice that $\mathrm{diam}(L_\Phi)\leq \|\Phi\|,$
 thus $|\alpha|\leq \|\Phi\|.$ So we have
 \begin{eqnarray*}
 &&\big|\phi_{|u|}(x)-|u|\alpha\big|
 \leq
 \big|\phi_{|u|}(x)-\phi_{|w|}(x)\big|+\big|\phi_{|w|}(x)-|w|\alpha\big|+
 |w^\prime||\alpha|\\
 &\leq&
 2|w^\prime|\cdot\|\Phi\|+2C(\Phi)+|w|\epsilon+|w^\prime||\alpha|\leq
 4|w^\prime|\cdot\|\Phi\|+2|C(\Phi)|+|u|\epsilon\\
 &=&|u|\Big (\epsilon+\frac{4|w^\prime|\cdot\|\Phi\|+2|C(\Phi)|}{|u|}\Big ).
 \end{eqnarray*}
Since $0<c_1(n)=o(n)$, for large $n$ we have
$$
4|w^\prime|\cdot\|\Phi\|+2|C(\Phi)|\leq
\frac{9n\|\Phi\|C_2(\Psi)}{|\Upsilon_{\max}|}\delta.
$$
Moreover if $\delta< 1/(2C_2(\Psi))$, then
 $$
 |u|\geq
C_1(\Upsilon)n(1-C_2(\Psi)\delta)\geq \frac{n}{2|\Upsilon_{\min}|}.
$$
Thus we get
$$
\frac{4|w^\prime|\cdot\|\Phi\|+2|C(\Phi)|}{|u|} \leq
\frac{18\|\Phi\|C_2(\Psi)|\Upsilon_{\min}|}{|\Upsilon_{\max}|}\delta
=:\kappa(\Psi,\Upsilon,\Phi)\delta=\kappa\delta.
$$
Fix any $a>\|\Theta-\Phi\|_{\mbox{\tiny \rm lim}}$. For $n$ large
enough we have
\begin{eqnarray*}
\big|\theta_{|u|}(x)-|u|\beta\big|&\leq &
\big|\theta_{|u|}(x)-\phi_{|u|}(x)\big|+
\big|\phi_{|u|}(x)-|u|\alpha\big|+|u||\alpha-\beta|\\
&\leq&a|u|+(\epsilon+\kappa\delta)|u|+\eta|u|=(a+\epsilon+\kappa\delta+\eta)|u|.
\end{eqnarray*}
As a result $u\in F(\beta,
[n(1-C_2(\Psi)\delta)],a+\epsilon+\kappa\delta+\eta,\Theta,\Upsilon).$
Thanks to our control of $|w^\prime|$, we can get
$$f(\beta, [n(1-C_2(\Psi)\delta)],a+\epsilon+\kappa\delta+\eta,\Theta,\Upsilon)
\geq
f(\alpha,n,\epsilon,\Phi,\Psi)m^{-\frac{c_1(n)+2nC_2(\Psi)\delta+C(\Upsilon)}{
 |\Upsilon_{\max}|}}.$$
This yields
 $$
 \Lambda_{\Phi}^{\Psi}(\alpha,\epsilon)\leq
 \frac{2C_2(\Psi)\log m}{|\Upsilon_{\max}|}\delta+(1-C_2(\Psi)\delta)
 \Lambda_{\Theta}^{\Upsilon}(\beta, a+\epsilon+\kappa\delta+\eta).
 $$
 Letting $\epsilon\downarrow 0$, then
$a\downarrow a_0$,  and $\delta\downarrow \delta_0$ we get
\begin{eqnarray*}
\Lambda_{\Phi}^{\Psi}(\alpha)&\leq& \frac{2C_2(\Psi)\log
m}{|\Upsilon_{\max}|}\delta_0+(1-C_2(\Psi)\delta_0)\Lambda_{\Theta}^{\Upsilon}(\beta,
(a_0+\kappa\delta_0+\eta)+)\\
&\leq& \frac{2C_2(\Psi)\log
m}{|\Upsilon_{\max}|}\delta_0+(1-C_2(\Psi)\delta_0)\Lambda_{\Theta}^{\Upsilon}(\beta,
a_0+\kappa\delta_0+2\eta).
\end{eqnarray*}

\section{Proof of Theorem~\ref{main-fun-level-one-sided}}\label{proofmainth}

We prove the slighly more general result mentioned in
Remark~\ref{remloc}(2). Suppose that $\xi$ is  continuous outside a
subset $E$ of $\Sigma_A$, bounded and $\xi(\Sigma_A)\subset
\mathrm{aff}(L_\Phi)$. Also, suppose that
 $\dim_H E<\lambda:=\sup\{\D(\alpha):\alpha\in \xi (\Sigma_A\setminus E)\cap
 \mathrm{ri}(L_\Phi)\}.$
 To prepare the proof of our geometric results,
  we need the following more general result.

 \begin{proposition}\label{lower-unify-func}
 Let $Z\subset \Sigma_A$ be a
closed set such that $\mu(Z)=0$ for any Gibbs measure $\mu$
 fully supported on $\Sigma_A$. For any $\delta>0$ such that
  $\lambda-\delta> \dim_H E$,
  we can construct a Moran subset $\Theta\subset\Sigma_A$
  such that $\Theta\setminus E\subset
 E_\Phi(\xi)$, $\dim_H \Theta\geq \lambda-\delta$ and  there
 exists an increasing sequence of integers $(\tilde g_j)_{j\ge 1}$
 such that $T^{\tilde g_j}x\not\in Z$ for any $x\in \Theta$ and any
 $j\geq 1.$
 \end{proposition}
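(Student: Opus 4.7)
The plan is to construct $\Theta$ as a Moran-type Cantor subset of $\Sigma_A$ obtained by concatenating typical blocks of suitably chosen Gibbs measures that locally track the point-dependent target $\xi(x)$. First I would reduce to finitely many targets per scale: since $\xi$ is continuous on the compact set $\Sigma_A \setminus E$ and bounded, for every $\epsilon > 0$ there is $N(\epsilon)$ such that on each cylinder $[u]$ of length $|u| \geq N(\epsilon)$ meeting $\Sigma_A \setminus E$, the oscillation of $\xi$ on $[u] \cap (\Sigma_A \setminus E)$ is at most $\epsilon$; one may therefore attach to each such $[u]$ a single target value $\alpha_u \in \xi(\Sigma_A \setminus E)$ drawn from a finite list.

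Next, for every target $\alpha \in \xi(\Sigma_A\setminus E)\cap \mathrm{ri}(L_\Phi)$ with $\D(\alpha) > \lambda - \delta$, I would pick an invariant measure $\mu_\alpha$ with $\Phi_*(\mu_\alpha) = \alpha$ and $h_{\mu_\alpha}/(-\Psi_*(\mu_\alpha))$ arbitrarily close to $\mathcal{E}(\alpha) \geq \D(\alpha)$. Using the H\"older approximations $(\Phi^{(k)},\Psi^{(k)})$ from Section~\ref{5.2} together with standard approximation of invariant measures by ergodic ones, I would replace $\mu_\alpha$ by a Gibbs measure $\widetilde\mu_\alpha$ associated with a H\"older potential, preserving the same quantitative properties up to $O(\epsilon)$-errors. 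Since $\widetilde\mu_\alpha$ is a Gibbs measure of a H\"older potential on the topologically mixing subshift $(\Sigma_A,T)$, it is fully supported, so the hypothesis on $Z$ yields $\widetilde\mu_\alpha(Z)=0$.

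For the construction itself, fix rapidly growing block lengths $(N_k)_{k\geq 1}$ and tolerances $\epsilon_k \downarrow 0$. At stage $k$ the current level of $\Theta$ is a finite family of cylinders $[w]$; to each $[w]$ I attach the target $\alpha_w$ via the first reduction and hence the Gibbs measure $\widetilde\mu_{\alpha_w}$ from the second step. Applying Shannon--McMillan--Breiman to $\widetilde\mu_{\alpha_w}$ on the shifted subshift, I retain as descendants of $[w]$ those admissible extensions $[wu]$ with $|u| \approx N_k$ on which both $\phi_{|u|}(y)/|u|$ and $\psi_{|u|}(y)/|u|$ lie within $\epsilon_k$ of $\alpha_w$ and $\Psi_*(\widetilde\mu_{\alpha_w})$ respectively for every $y \in [wu]$, of which there are at least $\exp(N_k(h_{\widetilde\mu_{\alpha_w}} - \epsilon_k))$. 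Between successive stages I insert a short connecting word from $\mathcal{W}$ and record the starting position of the next block as $\widetilde g_j$; the requirement $T^{\widetilde g_j}x \notin Z$ is enforced by discarding the exponentially negligible fraction of descendants whose prefix meets $Z$, a pruning made possible by $\widetilde\mu_{\alpha_w}(Z) = 0$. Choosing $N_k \gg N_1 + \cdots + N_{k-1}$ forces $\phi_n(x)/n \to \xi(x)$ for every $x \in \Theta \setminus E$, since for large $n$ the dominant block constrains this ratio to within $\epsilon_k$ of $\alpha_{w_k(x)}$, itself within $\epsilon_k$ of $\xi(x)$. The lower bound $\dim_H \Theta \geq \lambda - \delta$ then follows from the mass distribution principle in the ultrametric $d_\Psi$: the descendants of $[w]$ have $d_\Psi$-diameter $\approx \exp(N_k \Psi_*(\widetilde\mu_{\alpha_w}))$ and cardinality $\approx \exp(N_k h_{\widetilde\mu_{\alpha_w}})$, yielding a local dimension along $\Theta$ of at least $h_{\widetilde\mu_{\alpha_w}}/(-\Psi_*(\widetilde\mu_{\alpha_w})) - O(\epsilon_k) \geq \lambda - \delta$.

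The main obstacle will be the simultaneous management at each stage of three competing demands: the targets $\alpha_w$ must follow the point-dependent function $\xi$, forcing distinct Gibbs measures on different branches of $\Theta$; the descendants must avoid $Z$ at infinitely many recorded positions $\widetilde g_j$; and the ratio $h_{\widetilde\mu_{\alpha_w}}/(-\Psi_*(\widetilde\mu_{\alpha_w}))$ must stay uniformly within $\delta$ of $\lambda$ across all branches and all stages. This requires a careful joint choice of $(N_k,\epsilon_k)$ and a uniform Shannon--McMillan--Breiman control over the finite family of Gibbs measures active at each stage. The hypothesis $\dim_H E < \lambda - \delta$ is what permits the possibly badly-behaved set $E$, on which $\xi$ may be discontinuous or escape $\mathrm{ri}(L_\Phi)$, to be excised from $\Theta$ without lowering its Hausdorff dimension below $\lambda - \delta$.
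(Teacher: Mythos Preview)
Your overall architecture---concatenate typical blocks of Gibbs measures, prune to avoid $Z$, apply the mass distribution principle in $d_\Psi$---is the same as the paper's, and the way you handle the avoidance of $Z$ via $\widetilde\mu_{\alpha}(Z)=0$ is correct. However, there is a genuine gap in the dimension lower bound, and it is precisely the one you flag as the ``main obstacle'' without resolving.

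In your scheme the targets $\alpha_w$ are dictated by $\xi$ on each branch: you take $\alpha_w$ close to $\xi$ on $[w]$. But nothing forces $\D(\alpha_w)\ge\lambda-\delta$; the function $\xi$ may take values in regions of $L_\Phi$ where $\D$ is very small, and on those branches the ratio $h_{\widetilde\mu_{\alpha_w}}/(-\Psi_*(\widetilde\mu_{\alpha_w}))$ will be far below $\lambda-\delta$. Your mass distribution estimate then fails, since the local dimension along $\Theta$ is governed by the \emph{worst} branch ratio, not the best. You only defined Gibbs measures for targets with $\D(\alpha)>\lambda-\delta$, but the targets are not yours to choose---they are imposed by $\xi$. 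Also, your opening reduction assumes $\Sigma_A\setminus E$ is compact, which is not given.

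The paper's resolution is a localization trick that sidesteps the obstacle entirely. One does \emph{not} try to track $\xi$ globally. Instead, fix a single point $\hat x\in\Sigma_A\setminus E$ with $\xi(\hat x)=\alpha_0\in\mathrm{ri}(L_\Phi)$ and $\D(\alpha_0)>\lambda-\delta/2$. By continuity of $\D$ on $\mathrm{ri}(L_\Phi)$ (Proposition~\ref{regularity}) there is a ball $\widetilde B_\eta\subset\mathrm{ri}(L_\Phi)$ about $\alpha_0$ on which $\D>\lambda-\delta$ uniformly. By continuity of $\xi$ at $\hat x$, choose a cylinder $[\vartheta]\ni\hat x$ so small that $\xi([\vartheta])\subset\widetilde B_\eta$. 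Now build the entire Moran set $\Theta$ inside $[\vartheta]$. Every target $\alpha_w$ arising in the construction then lies in $\widetilde B_\eta$, so the required uniform bound $h_{\mu_{\alpha_w}}/(-\Psi_*(\mu_{\alpha_w}))\ge\lambda-\delta$ is automatic across all branches and stages. The targets within $\widetilde B_\eta$ are discretized via dyadic lattices $\Delta_j$ (so only finitely many Gibbs measures are in play at each stage), and the concatenated measure $\rho$ is built directly on $[\vartheta]$ without connecting words, using conditional measures indexed by the last letter of the previous block. The proof that $\Theta\setminus E\subset E_\Phi(\xi)$ and $\dim_H\Theta\ge\lambda-\delta$ then goes through exactly as you outlined.
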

\begin{proof} Fix $\delta>0$ such that $\lambda-\delta> \dim_H E$.
Choose $\alpha_0\in \xi(\Sigma_A\setminus E)\cap \text{ri}(L_\Phi)$
such that $\D(\alpha_0)>\lambda-\delta/2.$ Since $\D$ is continuous
in $\text{ri}(L_\Phi)$, we can find $\eta>0$ such that $\widetilde
B_\eta:=B(\alpha_0,\eta)\cap \text{aff}(L_\Phi)\subset
\text{ri}(L_\Phi)$ and for any $\alpha\in \widetilde B_\eta$ we have
$ |\D(\alpha)-\D(\alpha_0)|<\delta/2. $ Consequently
\begin{equation}\label{loc-0}
\D(\alpha)>\lambda-\delta\ \ \text{ for all }\ \ \alpha\in
\widetilde B_\eta.
\end{equation}

Now we proceed in four steps. The two first steps provide the scheme
of the construction of the set $\Theta$ and a good measure $\rho$, a
piece of which is supported by $\Theta$. The next two steps complete
the construction to ensure that $\Theta$ as the required properties
and the dimension of $\rho$ restricted to $\Theta$ has a Hausdorff
dimension larger than or equal to $\lambda-\delta$.

 {\bf Step 1:} {\it Concatenation of measures.} Assume $L_\Phi$
 has dimension $d_0\leq d$ and ${\rm
aff}(L_\Phi)=\alpha_0+U(\R^{d_0}\times \{0\}^{d-d_0})$, where $U$ is
a $d\times d$ orthogonal matrix.
 Let $j_0\in\N$ such that
$2^{-j_0}\sqrt{d_0}<\eta$ and define a sequence of sets as follows:
$$
\Delta_j:=\widetilde B_\eta\cap (\alpha_0+2^{-j-j_0}U
(\Z^{d_0}\times \{0\}^{d-d_0})),\ \ j\geq 0.
$$
Then $\Delta_0\ne\emptyset,$ $\Delta_j\subset\Delta_{j+1}$ for any
$j\geq 0$ and each $\Delta_j$ is a finite set. For each $\alpha\in
\bigcup_{j\geq 0}\Delta_j$, we can find a measure $\mu_\alpha$ such
that
\begin{equation}\label{loc-1}
{\Phi}_\ast(\mu_\alpha)=\alpha\ \ \text{ and }\ \ \ \
\D(\alpha)={\mathcal
E}(\alpha)=\frac{h_{\mu_\alpha}}{\gamma_\alpha}, \ \ \text{ where }\
\  \gamma_\alpha=-\Psi_\ast(\mu_\alpha).
\end{equation}

Let $(\varepsilon_j)_{j\ge 1}\in (0,1)^\N$  such that
$\sum_j\eps_j<\infty$. For each $j\ge 1$ define
$$
\overline \Delta_j=\{(\alpha,j):\alpha\in \Delta_j\}.
$$
For each $\sigma=(\alpha,j)\in \overline \Delta_j$, we can find a
Markov (hence Gibbs) measure $\mu_{\sigma} $ such that
\begin{equation}\label{loc-2}
\max (|h_{\mu_{\sigma}}-h_{\mu_\alpha}|,\ |\beta_{\sigma}-\alpha|,\
|\gamma_{\sigma}-\gamma_\alpha|)< \eps_j,
\end{equation} where
$\beta_{\sigma}={\Phi}_\ast(\mu_{\sigma})$ and
$\gamma_{\sigma}=-\Psi_\ast(\mu_{\sigma})$.

Let $(\phi^j)_{j\ge 1}$ and $(\psi^j)_{j\ge 1}$ be two sequences of
H\"older potentials defined on $\Sigma_A$ such that
\begin{equation}\label{phi-psi}
\|\Phi^{(j)}- \Phi\|_{\rm{lim}}< \eps_j\ \ \text{ and }\ \
\|\Psi^{(j)}- \Psi\|_{\rm{lim}}< \eps_j,
\end{equation}
where $\Phi^{(j)}=(S_n\phi^j)_{n=1}^{\infty}$ and
$\Psi^{(j)}=(S_n\psi^j)_{n=1}^{\infty}$.

For each $\omega=(\sigma,s)\in \overline \Delta_j\times
\{1,\cdots,m\},$ we denote by $\mu_\omega$ the restriction of
$\mu_{\sigma}$ to $[s]$ and
 $\nu_\omega$ the probability measure
 $\mu_\omega/\mu_{\omega}([s])$.

Now fix any positive integer sequence $\{L_j\}_{j\geq 1}$, we will
build a concatenated measure $\rho$ on $\Sigma_A$ with support
contained in a small cylinder $[\vartheta]$. At first we define
$\vartheta\in\Sigma_{A,*}$ and inductively a sequence of integers
$\{g_j:j\geq 0\}$ and a sequence of measures $\{\rho_j:j\geq 0\}$
such that $\rho_j$ is a measure on
$\Big([\vartheta],\sigma\{[u]:\vartheta\prec u\in
\Sigma_{A,g_j}\}\Big)$ for each $j\ge 0$, and the measures $\rho_j$
are consistent: for each $j\ge 0$ the restriction of $\rho_{j+1}$ to
$\sigma\{[u]:\vartheta\prec u\in \Sigma_{A,g_j}\}$ is
 equal to~$\rho_j$.

Fix $\hat x\in \Sigma_A\setminus E$ such that $\xi(\hat x)=\alpha_0$
and write $\hat x=\hat x_1\hat x_2\cdots.$ Since $\xi$ is continuous
at $\hat x$, we can choose $g_0\in\N$ such that
$\text{Osc}(\xi,[\hat x|_{g_0}])\leq 2^{-j_0}$, where
$\text{Osc}(\xi,V)$ stands for the oscillation of $\xi$ over $V$.
Write $\vartheta:=\hat x|_{g_0}.$ Define the probability measure
$\rho_0$ to be the trivial probability measure on
$([\vartheta],\{\emptyset,[\vartheta]\})$.
 Suppose we
have defined $(g_k,\rho_k)_{0\le k\le j}$ for $j\ge 0$ as desired.
To obtain $(g_{j+1},\rho_{j+1})$ from $(g_j,\rho_j)$,  define
$g_{j+1}:=g_j+L_{j+1}$.  For every $w\in \Sigma_{A,g_j}$ with
$\vartheta\prec w$, choose $x_w\in[w]$. Since $x_w\in [w]\subset
[\vartheta]$ we have
$$
 |\xi(x_w)-\alpha_0|=|\xi(x_w)-\xi(\hat x)|\leq
2^{-j_0}\leq \eta.
$$
Notice that by our assumption $\xi(\Sigma_A)\subset
\text{aff}(L_\Phi)$, thus $\xi(x_w)\in \widetilde B_\eta.$ Take
$\alpha_w\in \Delta_{j+1}$ such that $|\xi(x_w)-\alpha_w|\leq
2^{-j-1-j_0}\sqrt{d_0}$. Let $\omega=(\alpha_w,j+1,t_w)$, where for
each $u\in\Sigma_{A,\ast}$, $t_u$ stands for the last letter of $u$,
and it is called the {\it type} of $u.$   Then $\omega\in \overline
\Delta_{j+1}\times \{1,\cdots,m\}.$ For each
$v\in\Sigma_{A,L_{j+1}}$ such that $wv$ is admissible, define
$$
\rho_{j+1}([wv]):=\rho_j([w])\nu_\omega([t_{w}v]).
$$
By construction the family $\{\rho_j:j\geq 0\}$ is consistent.
Denote by $\rho$ the Kolmogorov extension of the sequence
$(\rho_j)_{j\ge 0}$ to $([\vartheta],\sigma\{[u]:\vartheta\prec u\in
\Sigma_{A,*}\})$. This finishes the construction of the desired
measure. Note that $g_j=g_0+L_1+\cdots+L_j$ for any $j\geq 1.$ Also,
by construction, we have the following formula for the $\rho$-mass
of any cylinder of generation larger than $g_0$. If $\vartheta\prec
u$ and $u\in \Sigma_{A,n}$ with $g_j\le n<g_{j+1}$, writing
$u=\vartheta w^1\cdots w^j\cdot v$ with $|w^k|=L_k$ and $|v|=n-g_j,$
and denoting $\vartheta w^1\cdots w^k$ by $\widetilde{w}_k$, then
\begin{equation}\label{rho-fun-level}
 \rho([u])=\Big( \prod_{k=1}^{j}
 \nu_{\omega_k}([t_{w^{k-1}}w^{k}])\Big )
 \nu_{\omega_{j+1}}([t_{w^{j}}v]),
\end{equation}
where $\omega_k=(\alpha_{\widetilde w_{k-1}},k,t_{w^{k-1}})$ for
$k=1,\cdots,j+1.$

{\bf Step 2:} {\it Construction of the Moran set $\Theta.$}

Next we want to specify the integer sequence $\{L_j\}_{j\geq 1}$ and
pick out carefully a Moran set $\Theta\subset [\vartheta]$ such that
$\rho(\Theta)>0$ and $\Theta$ has the last property stated in the
proposition. We proceed as follows.

 Fix $\omega=(\alpha,j,s)\in\overline \Delta_j\times
\{1,\cdots,m\}.$ For $N\ge 1$ Define
\begin{eqnarray*}
E_N(\omega)&:=&\bigcap_{n\ge N}\Big\{x\in [s]: \Big
|\frac{S_n\phi^j(Tx)}{n}-\alpha\Big |,\  \Big |\frac{\log
\nu_\omega([x|_{n}])}{-n}-h_{\mu_\alpha}\Big |,\\
&&\quad\quad\quad\quad\quad\quad\quad\quad\quad\quad\quad\quad\ \Big
|\frac{S_n\psi^j(Tx)}{-n}-\gamma_\alpha\Big |\le 2\eps_j\Big\}.
\end{eqnarray*}
 Notice
that each $\overline\Delta_j$ is a finite set, thus by   the
ergodicity of each $\mu_{(\alpha,j)}$, $(\ref{loc-2})$ and
$(\ref{phi-psi})$, we can fix an integer $N_j$ such that
\begin{equation}\label{aaa}
 \nu_\omega(E_N(\omega))\ge
1-\eps_j/2,\ \ \ \ (\forall \ N\ge N_j, \ \ \forall
\omega\in\overline \Delta_j\times \{1,\cdots,m\}).
\end{equation}
Define $ V_N:=\big \{v\in \Sigma_{A,N+1}:  [v]\cap Z=\emptyset\big
\}. $ By the restriction about $Z,$ there exists an integer
$\widehat{N}_j$ such that
\begin{equation}\label{bbb}
\nu_\omega\Big( \bigcup_{v\in V_N}[v]\Big )\ge 1-\eps_j/2, \ \ \ \
(\forall \ N\ge \widehat N_j, \ \ \forall \omega\in\overline
\Delta_j\times \{1,\cdots,m\}).
\end{equation}
Define $
 V_N(\omega)=\{v\in V_N,\ [v]\cap E_{N_j}(\omega)\neq\emptyset\}
.$ Thus, if $N\ge \max(N_j,\widehat{N}_j)$, by $(\ref{aaa})$ and
$(\ref{bbb})$ we have
\begin{equation}\label{ccc}
\nu_\omega\Big (\bigcup_{v\in  V_N(\omega)}[v]\Big )\ge 1-\eps_j,\ \
\ \ (\forall \omega\in\overline \Delta_j\times \{1,\cdots,m\}).
\end{equation}

Take  an integer sequence  $\{L_j\}_{n\geq 1} $  such that $L_j\geq
\max(N_j,\widehat{N}_j)$ for $j\geq 1$ and consider the associated
measure $\rho$ constructed in step 1.  We define the desired Moran
set as
$$
\Theta:=\big \{x\in [\vartheta]: \ \forall\ j\ge 0, \
T^{g_{j}-1}x|_{L_{j+1}+1}\in
V_{L_{j+1}}(\alpha_{x|_{g_{j}}},j+1,x_{g_{j}})\ \big\}.
$$ By construction, $T^{g_{j}-1}x\not\in Z$ for any $x\in
\Theta$ and any $j\geq 0.$ Define $\tilde g_j=g_j-1,$ we checked the
last property of $\Theta.$

Write $\omega_k:=(\alpha_{x|_{g_{k-1}}},k,x_{g_{k-1}})\in\overline
\Delta_k\times\{1,\cdots,m\}$. For each $j\ge 1$, by using
\eqref{rho-fun-level} and \eqref{ccc} we  get
\begin{eqnarray*}
&&\rho(\{x\in [\vartheta]: [x|_{g_j}]\cap \Theta\neq\emptyset\})
=\sum_{ \substack{ \vartheta w^1\cdots w^j\ \text{admissible},\\
\forall\, 1\le k \le j,\,  x_{g_{k-1}}w^k\in
 V_{L_k}(\omega_k)}} \rho_j(\vartheta w^1\cdots w^j)\\
&=&\sum_{  \substack{ \vartheta w^1\cdots w^{j-1}\ \text{admissible},\\
\forall\, 1\le k \le j-1,\,  x_{g_{k-1}}w^k\in
V_{L_k}(\omega_k)}}\rho_{j-1}(\vartheta w^1\cdots w^{j-1})
\sum_{x_{g_{j-1}}w^j\in   V_{L_j}(\omega_j)}
\nu_{\omega_j}([x_{g_{j-1}} w^j])\\
&\ge &\sum_{  \substack{ \vartheta w^1\cdots w^{j-1}\ \text{admissible},\\
\forall\, 1\le k \le j-1,\,  x_{g_{k-1}}w^k\in V_{L_k}(\omega_k)}}
\rho_{j-1}(\vartheta w^1\cdots w^{j-1})(1-\eps_j)\ge
\prod_{k=1}^j(1-\eps_k).
\end{eqnarray*}
  Since we
assumed that $\eps_j<1$ and $\sum_{j\ge 1}\eps_j<\infty$ we have
$$
\rho(\Theta)=\lim_{j\to\infty}\rho(\{x\in [\vartheta]:
[x|_{g_j}]\cap \Theta\neq\emptyset\})\ge \prod_{j\ge 1}(1-\eps_j)>0.
$$

{\bf Step 3:} {\it Conditions on $(L_j)_{j\geq 1}$ ensuring that
$\Theta\setminus E\subset E_\Phi(\xi).$}

 For
$\eta\in \{\phi,\psi\}$ and $j\ge 1$, let
$$
{\rm Var}(\eta^j)=\sup_{n\ge 1}\ \max_{v\in\Sigma_{A,n}}\
\max_{x,y\in[v]}\ |S_n\eta^j(x)-S_n\eta^j(y)|.
$$
This number is finite since each $\eta^j$ is H\"older continuous. By
$(\ref{phi-psi})$ we can find integer sequence $M_j\nearrow \infty$
such that
\begin{equation}\label{m-j}
\max(\|S_n\phi^j-\phi_n\|_\infty,\|S_n\psi^j-\psi_n\|_\infty)\le
2\eps_j n\ \ \ \ \ \  (\forall\ n\ge M_j).
\end{equation}
 The sequence $(L_j)_{j\ge 1}$ can be specified to satisfy the additional properties
$$
 L_j\ge M_{j+1} \ \ \text{and }\ \
\max(K_1(j), K_2(j),K_3(j))\le \eps_jg_j,
$$
(recall that $g_j=g_0+\sum_{k=1}^j L_k$), where
$$
\begin{cases}
K_1(j)=&\sum_{k=1}^{j+1}({\rm Var}(\phi^k)+{\rm Var}(\psi^k));\\
K_2(j)=&\max_{\substack{\alpha\in\Delta_{j+1}\\1\le s\le m\\1\le
n\le N_{j+1}}}\max\Big(n|\alpha|,\|S_n\phi^{j+1}\|_\infty,\|
\log\nu_{(\alpha,j+1,s)}([\cdot|_{n}])\|_\infty,\\
&\quad\quad\quad\quad\quad\quad\quad\quad\quad\|S_n\psi^{j+1}\|_\infty\Big);\\
K_3(j)=&(j+1)\max_{1\le n\le M_{j+1}} \max (\|S_n\phi^{j+1}-
\phi_{n}\|_\infty,\|S_n\psi^{j+1}-\psi_n\|_\infty).
\end{cases}
$$

 Let us check that $\Theta\setminus E\subset
E_{{\Phi}}({\xi}).$ Let $x\in\Theta\setminus E$, $n\ge g_1$ and
$j\ge 1$ such that $g_j\le n<g_{j+1}$. Since $n\geq g_j>L_j\ge
M_{j+1}$, by $(\ref{m-j})$ we have
\begin{eqnarray*}
|\phi_n (x)-n\xi(x)|&\le& \|S_n\phi^{j+1}-\phi_n\|_\infty+
|S_n\phi^{j+1}(x)-n\xi(x)| \\
&\le& 2\eps_{j+1} n+|S_n\phi^{j+1}(x)-n\xi(x)|.
\end{eqnarray*}
For the second term in the right hand side we have (with $g_{-1}=0$,
$\alpha_k=\alpha_{x|_{g_{k-1}}}$ and $L_0=g_0$)
\begin{eqnarray*}
&&|S_n\phi^{j+1}(x)-n\xi(x)|\\
&\leq&|S_{g_j}\phi^{j+1}(x)-g_j\xi(x)|+|S_{n-g_j}\phi^{j+1}(T^{g_j}x)-(n-g_j)\xi(x)|\\
&=&|S_{g_j}\phi^{j+1}(x)-\sum_{k=0}^jL_k\alpha_k+\sum_{k=0}^jL_k\alpha_k-
  g_j\xi(x)|\\
  &&+|S_{n-g_j}\phi^{j+1}(T^{g_j}x)-(n-g_j)\xi(x)|\\
  &\leq&\sum_{k=0}^j|S_{L_k}\phi^{j+1}(T^{g_{k-1}}x)-L_k\alpha_k|+
  \sum_{k=0}^jL_k|\alpha_k-
  \xi(x)|\ \ \ \big(=: (I)+(II) \big)\\
  &&+|S_{n-g_j}\phi^{j+1}(T^{g_j}x)-(n-g_j)\alpha_{j+1}|\quad\quad
  \quad\quad\quad\quad\quad\quad\quad \ \big(=: (III) \big)\\
  &&+(n-g_j)|\alpha_{j+1}-\xi(x)|
  \quad\quad\quad\quad\quad\quad\quad\quad
  \quad\quad\quad\quad\quad\quad\quad \ \big(=: (IV) \big).
\end{eqnarray*}

At first we have
\begin{eqnarray*}
&&(I)+(III)\\ &\le&
\sum_{k=0}^j\|S_{L_k}\phi^{j+1}-\phi_{L_k}\|_\infty+
\sum_{k=0}^j\|S_{L_k}\phi^k-\phi_{L_k}\|_\infty\\
&&+\Big ( \sum_{k=0}^j|S_{L_k}\phi^k(T^{g_{k-1}}x)- L_k\alpha_k|\Big
)+ |S_{n-g_j}\phi^{j+1}(T^{g_j}x)-(n-g_j)\alpha_{j+1}|.
\end{eqnarray*}
If $L_k\leq M_{j+1}$, then
$$
\|S_{L_k}\phi^{j+1}-\phi_{L_k}\|_\infty\leq K_3(j)/(j+1);
$$
 if $L_k>
M_{j+1}$, then
$$
\|S_{L_k}\phi^{j+1}-\phi_{L_k}\|_\infty\leq 2\varepsilon_kL_k.
$$ Thus we have
\begin{equation}\label{theta-1}
\sum_{k=0}^j\|S_{L_k}\phi^{j+1}-\phi_{L_k}\|_\infty\le
K_3(j)+2\sum_{k=0}^j\eps_kL_k\leq
\epsilon_jg_j+2\sum_{k=0}^j\eps_kL_k.
\end{equation}
Since $L_k\geq M_{k+1}\geq M_k$ we also have
\begin{equation}\label{theta-2}
\sum_{k=0}^j\|S_{L_k}\phi^k-\phi_{L_k}\|_\infty\le
2\sum_{k=0}^j\eps_kL_k.
\end{equation}
 Thus both terms are $o(g_j)$ as
$n\to\infty$. Consequently both terms are $o(n).$

Write $\omega_k=(\alpha_k,k,x_{g_{k-1}})$ for $k=0,\cdots,j$. By the
construction of $\Theta$, we have
$$
T^{g_{k-1}-1}x|_{L_k+1}=x_{g_{k-1}}\cdot (T^{g_{k-1}}x|_{L_k})\in
V_{L_k}(\omega_k),
$$ so
 $[x_{g_{k-1}}\cdot (T^{g_{k-1}}x|_{L_k})]\cap E_{N_k}(\omega_k)\neq\emptyset$.
 Using the definition of $E_{N_k}(\omega_k)$, since $L_k\geq
 N_k,$
there exists $y\in [T^{g_{k-1}}x|_{L_k}]$ such that
$|S_{L_k}\phi^k(y)-L_k\alpha_k|\le 2\eps_kL_k$, hence
 $$
 |S_{L_k}\phi^k(T^{g_{k-1}}x)-L_k\alpha_k|\le
 2\eps_kL_k+{\rm Var}(\phi^k).
 $$
 Similarly if $ n-g(j)\ge N_{j+1}$,
$$
|S_{n-g_j}\phi^{j+1}(T^{g_j}x)-(n-g_j)\alpha_{j+1}|\le
2\eps_{j+1}(n-g_j)+{\rm Var}(\phi^{j+1}).
$$
If $ n-g(j)\le N_{j+1}$ we trivially have
$$
|S_{n-g_j}\phi^{j+1}(T^{g_j}x)-(n-g_j)\alpha_{j+1}|\le 2K_2(j).
$$
This yields
\begin{eqnarray}
\nonumber&& \Big ( \sum_{k=0}^j|S_{L_k}\phi^k(T^{g_{k-1}}x)-
L_k\alpha_k|\Big )+
|S_{n-g_j}\phi^{j+1}(T^{g_j}x)-(n-g_j)\alpha_{j+1}|\\
\nonumber&\le &
 2\sum_{k=0}^j\eps_kL_k+\sum_{k=0}^{j+1}{\rm Var}(\phi^k)+
  2\eps_{j+1}(n-g_j)+2K_2(j)\\
 \label{theta-3}&\le&2\sum_{k=0}^j\eps_kL_k+ K_1(j)+  2\eps_{j+1}(n-g_j)+2K_2(j)
  =o(g_j)+o(n)=o(n).
 \end{eqnarray}
Combining $(\ref{theta-1}), (\ref{theta-2})$ and $(\ref{theta-3})$
we get $(I)+(III)=o(n)$.

On the other hand, by construction (recall that
$\alpha_{k+1}=\alpha_{x|_{g_k}}$)
$$
|\xi(x)-\alpha_{k+1}|\leq
|\xi(x)-\xi(x_{x|_{g_k}})|+|\xi(x_{x|_{g_k}})-\alpha_{x|_{g_k}}|\leq
{\rm Osc}(\xi,[x|_{g_k}])+2^{-k-j_0}\sqrt{d_0},
$$
where $x_{x|_{g_k}}$ is the special point in $[x|_{g_k}]$ chosen in
the construction of the measure $\rho$. Since $x\not\in E,$ $\xi$ is
continuous at $x$. We have
$$
\lim_{k\to\infty}{\rm Osc}(\xi,[x|_{g_k}])+2^{-k-j_0}\sqrt{d_0}=0.
$$ Thus $\lim_{k\to\infty}\alpha_k=\xi(x)$ and
we conclude that $(II)+(IV)=o(n).$ As a result $| \phi_n
(x)-n\xi(x)|=o(n)$, thus $x\in E_\Phi(\xi)$. This finishes the proof
of $\Theta\setminus E\subset E_{ \Phi}({\xi})$.

{\bf Step 4:}  $\dim_H\Theta\geq \lambda-\delta.$

Let us compute the local lower dimension $\underline{d}_\rho(x)$ for
any $x\in \Theta.$ By using similar estimates as above, for any
$x\in\Theta$ we can prove that (with
$\sigma_k=(\alpha_{x|_{g_{k-1}}},k)$)
\begin{eqnarray*}
&&|-\psi_{n}(x)
-\sum_{k=0}^{j}L_k\gamma_{\sigma_k}-(n-g_j)\gamma_{\sigma_{j+1}}|=o(n),\\
&& |-\log\rho([x|_n])
-\sum_{k=0}^{j}L_kh_{\mu_{\sigma_k}}-(n-g_j)h_{\mu_{\sigma_{j+1}}}|=o(n).
\end{eqnarray*}
By $(\ref{loc-0}), (\ref{loc-1})$ and $(\ref{loc-2})$ we have
$\liminf_{j\to\infty} h_{\mu_{\sigma_j}}/\gamma_{\sigma_j}\geq
\lambda-\delta$. For any $y\in [x|_n]$ we have
$|\psi_n(y)-\psi_n(x)|=o(n)$, thus we get
$\text{diam}([x|_n])=\Psi[x|_n]=\exp(\psi_n(x)+o(n))$. Combining the
above two relations  we conclude that
$$
\underline{d}_\rho(x)=\liminf_{n\to\infty}\frac{\log\rho([x|_{n}])}{\log({\rm
diam}([x|_{n}])}\geq \lambda-\delta.
$$
 Since $\rho(\Theta)>0,$ by the mass distribution principle we get   $\dim_H\Theta\geq
\lambda-\delta$ (see \cite{P}).
\end{proof}

A slight modification of the above proof with $\xi$ taken constant
yields the following proposition:

 \begin{proposition}\label{lower-unify}
  Assume $Z\subset \Sigma_A$ is a
closed set such that $\mu(Z)=0$ for any Gibbs measure $\mu$ fully
supported on $\Sigma_A$. For any $\alpha\in L_\Phi,$ we can
construct a
 subset $\Theta\subset
 E_\Phi(\alpha)$ such that $\dim_H\Theta\ge {\mathcal E}(\alpha) $
 and  there exists an integer sequence $g_j\nearrow
 \infty$ such that $T^{g_j}x\not\in Z$ for any $x\in \Theta$ and any
 $j\geq 1.$ In particular, ${\mathcal E}(\alpha)\leq
 \D(\alpha)$.
 \end{proposition}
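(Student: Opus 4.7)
The plan is to obtain Proposition~\ref{lower-unify} as the constant-$\xi$ instance of Proposition~\ref{lower-unify-func}: take $\xi\equiv\alpha$ and $E=\emptyset$ in that construction. The oscillation-control aspects then disappear, and the Moran-set machinery directly produces a subset $\Theta\subset E_\Phi(\alpha)$ of Hausdorff dimension at least $\mathcal{E}(\alpha)$ that avoids $Z$ along a single integer sequence $g_j\nearrow\infty$.

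First I would check that the supremum defining $\mathcal{E}(\alpha)$ is attained. The set $\{\mu\in\M(\Sigma_A,T):\Phi_\ast(\mu)=\alpha\}$ is nonempty by Proposition~\ref{bsic-aa} and closed (hence compact) since $\Phi_\ast$ is continuous on $\M(\Sigma_A,T)$: almost additivity gives $|\int\phi_{n+p}\,\mathrm{d}\mu-\int\phi_n\,\mathrm{d}\mu-\int\phi_p\,\mathrm{d}\mu|\le C(\Phi)$, whence $\Phi_\ast$ is simultaneously the infimum and the supremum of a monotone family of continuous functionals. On this compact set $\mu\mapsto h_\mu(T)/(-\Psi_\ast(\mu))$ is upper semi-continuous thanks to the upper semi-continuity of entropy on expansive systems, the continuity of $\Psi_\ast$, and the uniform positivity $-\Psi_\ast(\mu)\ge|\Psi_{\max}|>0$. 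Fix a maximizer $\mu_\alpha$. I would then mimic Step~1 of the proof of Proposition~\ref{lower-unify-func} with the singleton $\overline\Delta_j=\{(\alpha,j)\}$ in place of a net of targets: approximate $\mu_\alpha$ by Markov (hence Gibbs) measures $\mu_{(\alpha,j)}$ satisfying analogues of~\eqref{loc-2}, and fix H\"older approximations $\Phi^{(j)},\Psi^{(j)}$ as in~\eqref{phi-psi}. This yields a concatenated measure $\rho$ on a cylinder $[\vartheta]$ (chosen arbitrarily admissible, since no continuity of $\xi$ need be imposed), with block lengths $L_j$ to be fixed below.

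Next I would run the Moran construction of Step~2 of Proposition~\ref{lower-unify-func} verbatim, choosing the $L_j$ large enough that the sets $E_{N_j}(\omega)$ and $V_{L_j}$ coming from~\eqref{aaa}--\eqref{ccc} have the required mass for each conditional $\nu_{(\alpha,j,s)}$. The resulting Moran set
\[
\Theta=\{x\in[\vartheta]:\forall\,j\ge 0,\ T^{g_j-1}x|_{L_{j+1}+1}\in V_{L_{j+1}}(\alpha,j+1,x_{g_j})\}
\]
satisfies $\rho(\Theta)>0$ and $T^{\widetilde g_j}x\notin Z$ for every $x\in\Theta$ and $j\ge 1$, with $\widetilde g_j:=g_j-1$. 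The inclusion $\Theta\subset E_\Phi(\alpha)$ (Step~3) is strictly easier here than in the functional case: with $\xi\equiv\alpha$ the auxiliary targets $\alpha_k$ all equal $\alpha$, so the terms $(II)$ and $(IV)$ in the decomposition of $|\phi_n(x)-n\alpha|$ vanish identically, and the bounds on $(I)$ and $(III)$ transfer unchanged under the standard conditions $L_j\ge M_{j+1}$ and $K_1(j),K_2(j),K_3(j)\le\epsilon_j g_j$. Finally Step~4 delivers $\underline d_\rho(x)\ge h_{\mu_\alpha}(T)/(-\Psi_\ast(\mu_\alpha))=\mathcal{E}(\alpha)$ for every $x\in\Theta$, and the mass distribution principle yields $\dim_H\Theta\ge\mathcal{E}(\alpha)$. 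The conclusion $\mathcal{E}(\alpha)\le\D(\alpha)$ is immediate from $\Theta\subset E_\Phi(\alpha)$.

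I expect the main obstacle to be the first step, namely verifying the attainment of the supremum defining $\mathcal{E}(\alpha)$: without it, one would have to build countably many Moran sets $\Theta_{\epsilon_k}$ with $\dim_H\Theta_{\epsilon_k}\ge\mathcal{E}(\alpha)-\epsilon_k$ and combine their individual $Z$-avoidance sequences into a single sequence $(g_j)$ valid on their union, which is awkward to arrange. Once attainment is established, the remainder is a faithful but genuinely simpler transcription of the argument for Proposition~\ref{lower-unify-func}, the intricate bookkeeping collapsing cleanly in the constant-$\xi$ case.
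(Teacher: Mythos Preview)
Your proposal is correct and follows exactly the approach the paper indicates: the paper states that Proposition~\ref{lower-unify} is obtained by ``a slight modification of the above proof with $\xi$ taken constant,'' and you have correctly identified both the modification and why it is needed. In particular, you are right that one cannot simply quote Proposition~\ref{lower-unify-func} verbatim, since its Step~1 invokes the equality $\D(\alpha)=\mathcal{E}(\alpha)$ from Theorem~\ref{main-one-sided}(1), which in turn relies on Proposition~\ref{lower-unify}; your direct attainment argument for the supremum in $\mathcal{E}(\alpha)$ (via continuity of $\Phi_\ast$, $\Psi_\ast$ and upper semi-continuity of entropy) breaks this potential circularity and also sidesteps the restriction to $\mathrm{ri}(L_\Phi)$ that would otherwise exclude boundary $\alpha$.
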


 \noindent{\bf Proof of Theorem~\ref{main-fun-level-one-sided}.}
(1')\ Since $\dim_H E<\lambda-\delta,$  by Proposition
\ref{lower-unify-func}  we have $\dim_H(\Theta\setminus E)=\dim_H
\Theta\geq\lambda-\delta.$ Consequently $ \dim_H E_\Phi(\xi)\geq
\dim_H (\Theta\setminus E)\geq \lambda-\delta. $ Since $\delta>0$ is
arbitrary, we get $\dim_HE_\Phi(\xi)\geq \lambda$.

Now we assume $\xi$ is continuous everywhere.

(2) If $\xi(\Sigma_A)\subset L_\Phi$, the construction of a Moran
subset of
 $E_\Phi(\xi)$ can be done around any point of $\Sigma_A$, like in the proof of
 Proposition~\ref{lower-unify-func}. The only difference is that in this case
   the dimension of this set is of no importance. Hence, $E_\Phi(\xi)$  is dense.

(3)\  If
$$
\sup\{\D(\alpha):\alpha\in \xi (\Sigma_A)\cap \mathrm{ri}(L_\Phi)\}=
\sup\{\D(\alpha):\alpha\in
 {\xi (\Sigma_A)}\cap L_\Phi\}=:\theta,
$$ then at first we have $\dim_H E_\Phi(\xi)\geq \theta.$
On the other hand by  definition  we have $E_\Phi(\xi)\subset
E_\Phi( {\xi (\Sigma_A)}\cap L_\Phi)$. Thus by Proposition
\ref{upper-bound}, we have $\dim_P E_\Phi(\xi)\leq \theta.$ So we
get
$$
\dim_H E_\Phi(\xi)=\dim_P E_\Phi(\xi)= \theta.
$$

(4)\ Assume $d=1$ and $\xi(\Sigma_A)\subset L_\Phi.$ Notice that in
this case $L_\Phi=[\alpha_1,\alpha_2]$ is an interval. Thus by
Proposition \ref{regularity} and Theorem \ref{main-one-sided}, $\D$
is continuous on $L_\Phi.$ Assume $\alpha_0\in\xi(\Sigma_A)$ such
that $\D(\alpha_0)=\sup\{\D(\alpha):\alpha\in \xi(\Sigma_A)\}$. If
$\alpha_0\in (\alpha_1,\alpha_2),$ by (3) we conclude. Now assume
$\alpha_0=\alpha_1$. If $\alpha_1$ is not isolated in
$\xi(\Sigma_A),$ still by (3) and the continuity of $\D$, we get the
result. If $\alpha_1$ is isolated in $\xi(\Sigma_A)$, then by the
continuity of $\xi$, we can find a cylinder $[w]\subset \Sigma_A$
such that $\xi([w])=\alpha_1.$ From this we get $E_\Phi(\xi)\supset
E_\Phi(\alpha_1)\cap[w]$. Thus $\dim_H E_\Phi(\xi)\geq \D(\alpha_1)$
and the result holds. If $\alpha_0=\alpha_2$, the proof is the same.
 \hfill
$\Box$


\section{Proofs of results in section \ref{examples}}\label{sec7}

We will use the following lemma, which is standard and essentially
the same as Lemma 5.1 in  \cite{GP97} (the proof is elementary).
\begin{lemma} \label{dim-compare-lem}
Let $X$ and $Y$ be metric spaces and $\chi: X\to Y$ a surjective
mapping with the following property: there exists a function
$N:(0,\infty)\to \N$ with  $\log N(r)/\log r\to 0$ when $r\to 0$
such that for any $r>0$, the pre-image $\chi^{-1}(B)$ of any
$r$-ball in $Y$ can be covered by at most $N(r)$ sets in $X$ of
diameter less than $r$. Then for any set $E\subset Y$ we have
$\dim_H E\geq \dim_H \chi^{-1}(E).$
\end{lemma}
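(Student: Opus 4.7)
The plan is to lift a near-optimal Hausdorff cover of $E$ from $Y$ to $X$ via the covering hypothesis on $\chi^{-1}(B)$, and to absorb the multiplicity factor $N(r)$ into an arbitrarily small loss in the dimension exponent using the sub-polynomial growth $\log N(r)/\log r\to 0$.

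Concretely, I would fix $s>\dim_H E$ and $\varepsilon>0$. By the hypothesis on $N$, there exists $\delta_0\in(0,1)$ such that $N(r)\le r^{-\varepsilon}$ for all $0<r<\delta_0$. Since $\mathcal{H}^s(E)=0$, for every $\eta>0$ one can cover $E$ by balls $B_i=B(y_i,r_i)$ in $Y$ with radii $r_i<\delta_0$ and $\sum_i r_i^s<\eta$ (any cover of $E$ by sets of small diameter $d_i$ may be replaced by the family of balls of radius $d_i$ centered at a point of each set, at the cost of a harmless constant in the estimates). By assumption each $\chi^{-1}(B_i)$ can be covered by at most $N(r_i)$ sets $W_{i,1},\dots,W_{i,k_i}$ in $X$ with $\mathrm{diam}(W_{i,j})<r_i<\delta_0$, so $\{W_{i,j}\}_{i,j}$ is a cover of $\chi^{-1}(E)$ of mesh below $\delta_0$ satisfying
$$
\sum_{i,j}\mathrm{diam}(W_{i,j})^{s+2\varepsilon}\le \sum_i N(r_i)\,r_i^{s+2\varepsilon}\le \sum_i r_i^{s+\varepsilon}\le \delta_0^{\varepsilon}\sum_i r_i^s<\delta_0^{\varepsilon}\eta.
$$

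Hence $\mathcal{H}^{s+2\varepsilon}_{\delta_0}(\chi^{-1}(E))\le \delta_0^{\varepsilon}\eta$. Letting $\eta\to 0$ yields $\mathcal{H}^{s+2\varepsilon}(\chi^{-1}(E))=0$, so $\dim_H\chi^{-1}(E)\le s+2\varepsilon$; sending $\varepsilon\to 0$ and then $s\downarrow\dim_H E$ gives the claim. The argument is essentially routine, and there is no substantive obstacle: the only point requiring care is the order of the quantifiers — one must first use the hypothesis on $N$ to fix the threshold $\delta_0$, and only then extract the Hausdorff cover of $E$ with radii below $\delta_0$, so that the bound $N(r_i)\le r_i^{-\varepsilon}$ applies uniformly to every ball in the cover.
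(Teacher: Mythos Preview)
Your argument is correct and is precisely the standard elementary proof: lift a near-optimal ball cover of $E$ to a cover of $\chi^{-1}(E)$ and absorb the multiplicity $N(r)$ using $N(r)\le r^{-\varepsilon}$ for small $r$. The paper does not give its own proof of this lemma; it simply cites Lemma~5.1 of \cite{GP97} and declares the proof elementary, so your write-up is exactly the omitted argument.
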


\noindent{\bf Proof of Proposition \ref{dim-compare-prop}.} \
Condition (4) implies that $\chi:(\Sigma_A,d_\Psi)\to (J,d)$ is
Lipschitz continuous, thus we have $\dim_H E\leq
\dim_H\chi^{-1}(E).$

For the converse inequality, let us check the condition of the above
lemma. Let $B\subset J$ be a ball of radius $r$, let $n\in \N$ such
that $e^{-n}\leq r< e^{1-n}$. Define
 $$
G_B^r=\{w\in {\mathcal B}_n(\Psi): R_w\cap B\ne \emptyset\}.
 $$
 One checks that $\{[w]:w\in G_B^r\}$ is an $r$-covering of
 $\chi^{-1}(B)$. Define $N(r):=\#G_B^r.$
  Let us estimate the number $\#G_B^r.$ Clearly, $\#G_B^r\geq 1.$
  By  condition (4), for each $w\in
 G_B^r$, $R_w$ is contained in a ball of radius $K\Psi[w]\leq
 Ke^{-n}$, thus
  $$
  \bigcup_{w\in G_B^r}R_w\subset B(y,r+2Ke^{-n})\subset
  B(y,(e+2K)e^{-n}),
  $$
  where $y$ is the center of $B$. On
 the other hand, by Lemma \ref{Moran-covering}(1) there exists $C>0$
 such that $|w|\leq
 Cn$ for any $w\in{\mathcal B}_n(\Psi)$, thus
 $\eta_{|w|}=o(|w|)=o(n)$ for any $w\in{\mathcal B}_n(\Psi)$.
By  construction, the interiors of the sets $R_w$, $ w\in G_B^r$,
are disjoint and each $R_w$ contains a ball
 of radius
 $$
 K^{-1}\exp(\eta_{|w|})\Psi[w]=K^{-1}e^{o(n)}\Psi[w]=K^{-1}e^{-n+o(n)}
 $$
 by Lemma
 \ref{Moran-covering} (2). Thus $\#G_B^r\leq K^{d^\prime}(e+2K)^{d^\prime}e^{o(n)}$.
 So we conclude that $\log N(r)/\log r=\log \#G_B^r/\log r\to 0$
 as $r\to 0.$ Thus by lemma \ref{dim-compare-lem}, we  can conclude
 that $\dim_H E\geq \dim_H\chi^{-1}(E).$
\hfill$\Box$

\noindent{\bf Proof of Lemma \ref{boundary}.}\ At first we show that
$J\cap V= J\setminus \widetilde Z_\infty,$ consequently by the SOSC,
$J\setminus \widetilde Z_\infty\ne\emptyset$ and  we get
$\emptyset\ne \chi^{-1}(J\setminus \widetilde
Z_\infty)=\Sigma_A\setminus Z_\infty.$ In fact
\begin{eqnarray*}
y\in J\setminus \widetilde Z_\infty &\Leftrightarrow& y\in J \text{
and }  \forall\ n\ge 1\ \exists\ x\in\Sigma_A \ \ s.t.\ \ y\in
\text{int} (R_{x|_n})=f_{x|_n}(V) \\
&\Leftrightarrow& y\in J  \text{ and } \forall\ n\ge 1\ \exists!\
x\in\Sigma_A \ \ s.t.\ \ y\in \text{int}
(R_{x|_n})=f_{x|_n}(V)\\
&\Leftrightarrow& y\in J\cap V.
\end{eqnarray*}

By construction, $\chi: \Sigma_{A}\setminus  Z_\infty\to J\setminus
 \widetilde Z_\infty$ is surjective. Since $ J\setminus \widetilde
Z_\infty=J\cap V,$ it is ready to show that $\chi$ is also
injective.

Next we show that $T(\Sigma_A\setminus Z_\infty)\subset
\Sigma_A\setminus Z_\infty.$ Take $x\in \Sigma_A\setminus Z_\infty$.
If
 $Tx\in Z_\infty,$ then we can find $n_0\in \N$ such that
$\chi(Tx)\in f_{Tx|_{n_0}}(\partial V)$. Consequently $
\chi(x)=f_{x_1}(\chi(Tx))\in f_{x_1}(f_{Tx|_{n_0}}(\partial V))=
f_{x_1}\circ f_{Tx|_{n_0}}(\partial V)= f_{x|_{n_0+1}}(\partial V)
$, which is a contradiction. Next we show that for any Gibbs measure
$\mu$ we have $\mu(Z_\infty)=0.$ Define $ \widetilde
Z_n:=\bigcup_{w\in \Sigma_{A,\ast},\ |w|\leq n}f_w(\partial V) $ and
$Z_n=\chi^{-1}(\widetilde Z_n)$. The sequence $(Z_n)_{n\ge 1}$ is
non decreasing and $Z_\infty=\bigcup_{n\geq 1}Z_n.$ Since the IFS is
conformal we can easily get $T(Z_n)\subset Z_{n-1}$ for $n\geq 1$
and $T(Z_0)\subset Z_0.$ Consequently $T(Z_n)\subset Z_{n}.$ By the
ergodicity we have $\mu(Z_n)=0$ or $1$. By the SOSC,
  $\Sigma_A\setminus Z_n$ is nonempty and open, thus  by
the Gibbs property of $\mu$ we get $\mu(\Sigma_A\setminus Z_n)>0$,
hence $\mu(Z_n)=0$. Consequently $\mu(Z_\infty)=0.$ At last, from
$T(Z_n)\subset Z_{n}$ we easily get $T(Z_\infty)\subset Z_\infty.$
\hfill$\Box$

\noindent{\bf Proof of Theorem~\ref{appli-one-sided}.}\ (1) At first
we notice that by the property (4) assumed in the construction of
$J$ the mapping $\chi$ is Lipschitz. This is enough to get the
desired upper bounds from Theorem~\ref{main-one-sided}(1).

Now we deal with the lower bound for dimensions and the equality
$L_\Phi=L_{\widetilde\Phi}$.
 We notice that the inclusion $L_\Phi\subset L_{\widetilde\Phi}$ holds by construction.

\noindent{\it Suppose $J$ is a conformal repeller.} Since we have
$\chi\circ T=g\circ \chi$ on $\Sigma_A$ and $\chi$ is surjective, it
is seen that
$\chi^{-1}(E_\Phi(\alpha))=E_{\widetilde{\Phi}}(\alpha)$ for any
$\alpha\in L_{\widetilde{\Phi}}.$ Thus $L_\Phi=L_{\widetilde{\Phi}}$
and by Proposition \ref{dim-compare-prop}, we have $\dim_H
E_\Phi(\alpha)=\dim_H E_{\widetilde \Phi}(\alpha).$

\noindent{\it Suppose $J$ is the attractor of a  conformal IFS with
SOSC.} Let $\alpha\in L_{\widetilde \Phi}$. Let
$Z=\chi^{-1}(\partial V)$. The set $Z$ is closed and by Lemma
\ref{boundary},
  $\mu(Z)=0$ for any Gibbs
measure $\mu$. By Proposition \ref{lower-unify} we can construct a
Moran set $\Theta\subset E_{\widetilde{\Phi}}(\alpha)$ such that
$\dim_H(\Theta)\geq {\mathcal E}_{\widetilde \Phi}(\alpha)=\dim_H
E_{\widetilde \Phi}(\alpha)$ and there exists a sequence
$g_j\nearrow\infty$ such that $T^{g_j}x\not\in Z$ for any
$x\in\Theta$ and any $j\geq 1.$ The last property means that
$\Theta\subset \Sigma_A\setminus Z_\infty.$ Since $\chi$ is a
bijection between $\Sigma_A\setminus Z_\infty$ and
$J\setminus\widetilde Z_\infty$, we conclude that
$\chi^{-1}\circ\chi(\Theta)=\Theta,$ thus by Proposition
\ref{dim-compare-prop}, $ \dim_H\chi(\Theta)=\dim_H \Theta\geq\dim_H
E_{\widetilde \Phi}(\alpha).$ Since we also have $\chi\circ
T=\widetilde{g}\circ\chi$ on $\Sigma_A\setminus Z_\infty,$ we get
that $\chi(\Theta)\subset E_\Phi(\alpha).$ Thus $\alpha\in L_\Phi$
and $\dim_H E_\Phi(\alpha)\geq \dim_H E_{\widetilde \Phi}(\alpha)$.

\noindent (2) Take $E=J$ in Proposition \ref{dim-compare-prop}, then
use (1) and Theorem \ref{main-one-sided}(2). \hfill $\Box$

\noindent{\bf Proof of Theorem~\ref{appli-fun-level-one-sided}.}\
Define $\widetilde{\xi}:=\xi\circ\chi.$

\noindent{\it Case 1:}  $J$ is a conformal repeller.  One checks
easily that
$\chi^{-1}(E_\Phi(\xi))=E_{\widetilde{\Phi}}(\widetilde{\xi}).$ Then
the result is a consequence of Theorem \ref{appli-one-sided},
Theorem \ref{main-fun-level-one-sided} and Proposition
\ref{dim-compare-prop}.

\noindent{\it Case 2:}  $J$ is the attractor of a conformal IFS with
SOSC. We conclude by using Proposition \ref{lower-unify-func},
Theorem \ref{main-fun-level-one-sided}, Proposition
\ref{dim-compare-prop} and the same argument as in the proof of
Theorem \ref{appli-one-sided}.
 \hfill$\Box$

\noindent{\bf Proof of Theorem \ref{carpet}.}\ Let $\xi=\Phi,$ then
${\mathcal
 F}(\widetilde{J},\widetilde{g})=E_{\Phi}(\xi)$. To show the result we need
 only to check the condition of Theorem
 \ref{appli-fun-level-one-sided} and the only condition we need to
 check is that
\begin{equation}\label{final}
\sup\{\D_\Phi(\alpha): \alpha\in \xi(J)\cap{\rm
ri}(L_\Phi)\}=\sup\{\D_\Phi(\alpha): \alpha\in \xi(J)\cap L_\Phi\}.
\end{equation}
Notice that in this special case we have $\xi(J)=J$ and $L_\Phi={\rm
Co}(J),$ thus $\xi(J)\cap L_\Phi=J.$  Recall  that in this case
$L_\Phi$ is a convex polyhedron, thus by Proposition
\ref{regularity} and Theorem \ref{appli-one-sided}, $\D_\Phi$ is
continuous on $L_\Phi.$ Thus the supremum  in the right hand side of
$(\ast)$ can be reached. If the maximum is attained in  ${\rm
ri}(L_\Phi)$, then the result is obvious. Now suppose that there
exists  $\alpha_0\in
 \big(L_\Phi\setminus {\rm ri}(L_\Phi)\big)\cap J$
 such that $\D_\Phi(\alpha_0)=\sup\{\D_\Phi(\alpha):
\alpha\in J\}.$
 By the structure of $J$, it is ready to see that $B(\alpha_0,\delta)\cap
J\cap {\rm ri}(L_\Phi)\ne \emptyset$ for any $\delta>0$. By the
continuity of $\D_\Phi$, $(\ref{final})$ holds immediately.
 \hfill $\Box$


\medskip
\medskip

\end{document}